\pgfplotsset{compat=1.13}
\newenvironment{enumalphii}
{\begin{enumerate}}
{\end{enumerate}}
\newenvironment{enumroman}
{\begin{enumerate}}
{\end{enumerate}}
\newenvironment{enumalg}
{\begin{enumerate}}
{\end{enumerate}}
\pgfplotsset{compat=newest}
\newcommand{\mathbbords}{\mathbb}
\newcommand{\C}{{\mathbbords{C}}}
\newcommand{\N}{{\mathbbords{N}}}
\newcommand{\Q}{{\mathbbords{Q}}}
\newcommand{\R}{{\mathbbords{R}}}
\newcommand{\Z}{{\mathbbords{Z}}}
\newcommand{\F}{\mathbbords{F}}
\newcommand{\PP}{\mathbbords{P}}
\newcommand{\frakN}{\mathfrak{N}}
\newcommand{\frakp}{\mathfrak{p}}
\DeclareMathOperator{\ord}{ord}
\DeclareMathOperator{\Tr}{Tr}
\DeclareMathOperator{\SL}{SL}
\DeclareMathOperator{\Nm}{Nm}
\DeclareMathOperator{\GL}{GL}
\DeclareMathOperator{\PSL}{PSL}
\DeclareMathOperator{\PGL}{PGL}
\DeclareMathOperator{\PXL}{PXL}
\DeclareMathOperator{\Sym}{Sym}
\DeclareMathOperator{\adim}{adim}
\DeclareMathOperator{\M}{M}
\DeclareMathOperator{\lcm}{lcm}
\newcommand{\calH}{\mathcal{H}}
\renewcommand{\triangle}{\Delta}
\renewcommand{\P}{\ensuremath{\mathbbords{P}}}
\renewcommand*{\backref}[1]{}
\renewcommand*{\backrefalt}[4]{%
  \ifcase #1 %
    \relax
  \or
    $\uparrow$#2.%
  \else
    $\uparrow$#2.%
  \fi%
}
\definecolor{qqzzcc}{rgb}{0.,0.6,0.8}
\numberwithin{equation}{section}
\newtheorem{theorem}[equation]{Theorem}
\newtheorem{proposition}[equation]{Proposition}
\newtheorem{conj}[equation]{Conjecture}
\newtheorem{lemma}[equation]{Lemma}
\newtheorem{corollary}[equation]{Corollary}
\theoremstyle{definition}
\newtheorem{definition}[equation]{Definition}
\newtheorem{example}[equation]{Example}
\newtheorem{algorithm}[equation]{Algorithm}
\theoremstyle{remark}
\newtheorem{remark}[equation]{Remark}
\newcommand{\defi}[1]{\textsf{#1}}
\newcommand{\Pone}[2]{{\renewcommand*{\arraystretch}{0.45} \begin{pmatrix} {#1} \\ \rotatebox[origin=c]{90}{:} \\ {#2} \end{pmatrix} \renewcommand*{\arraystretch}{1}}}
\def\Legendre(#1,#2){%
\begin{pmatrix}
#1\cr 
\hline
#2\cr
\end{pmatrix}}
\newcommand{\frakd}{\mathfrak d}
\newcommand{\calO}{\mathcal O}
\newcommand{\psmod}[1]{~(\textup{\text{mod}}~{#1})}
\DeclareMathOperator{\discrd}{discrd}
\DeclareMathOperator{\nrd}{nrd}
\title{Triangular modular curves of small genus}
\author{Juanita Duque-Rosero}
\address{Department of Mathematics, Dartmouth College, 6188 Kemeny Hall, Hanover, NH 03755, USA}
\email{juanita.duque.rosero.gr@dartmouth.edu}
\author{John Voight}
\address{Department of Mathematics, Dartmouth College, 6188 Kemeny Hall, Hanover, NH 03755, USA}
\email{jvoight@gmail.com}
\date{}
\begin{document}
\maketitle
\begin{abstract}
    Triangular modular curves are a generalization of modular curves that arise from quotients of the upper half-plane by congruence subgroups of hyperbolic triangle groups. These curves also arise naturally as a source of Belyi maps with monodromy $\text{PGL}_2(\mathbbords{F}_q)$ or $\text{PSL}_2(\mathbbords{F}_q)$. We present a computational approach to enumerate Borel-type triangular modular curves of low genus, and we carry out this enumeration for prime level and small genus.
\end{abstract}

\section{Introduction}\label{sec:intro}

\subsection*{Motivation}

The study of modular curves has rewarded mathematicians for perhaps a century.  For an integer $N \geq 1$, let $\Gamma_0(N),\Gamma_1(N) \leq \SL_2(\Z)$ be the usual congruence subgroups and let $X_0(N),X_1(N)$ be the corresponding quotients of the completed upper half-plane.  The genera of $X_0(N)$ and $X_1(N)$ as compact Riemann surfaces can be computed using the Riemann--Hurwitz formula, and it can readily be seen that there are only finitely many of any given genus $g \geq 0$. 

The study of modular curves of small genus goes back at least to Fricke \cite[p.~357]{Fricke}.  At the end of the twentieth century, Ogg enumerated and studied elliptic \cite{Ogg1} and hyperelliptic \cite{Ogg2} modular curves; the resulting Diophantine study \cite{Ogg3} informed Mazur's classification of rational isogenies of elliptic curves \cite{Mazur}, where the curves of genus $0$ are precisely the ones with infinitely many rational points.  This explicit study continues today, extended to include all quotients of the upper half-plane by congruence subgroups of $\SL_2(\Z)$; the list up to genus $24$ was computed by Cummins--Pauli \cite{CumminsPauli}.  Recent papers have studied curves with infinitely many rational points in the context of Mazur's \emph{Program B}---see Rouse--Sutherland--Zureick-Brown \cite{RSZB} for further references and recent results in this direction.

Given this rich backdrop, it is worthwhile to pursue  generalizations.  For example, replacing $\SL_2(\Z)$ with its quaternionic cousins, Voight \cite{Voight} enumerated all Shimura curves of the form $X_0^1(\mathfrak{D},\mathfrak{M})$ of genus at most $2$.  In a similar direction, Long--Maclachlan--Reid \cite{LMR} enumerated all maximal arithmetic Fuchsian groups of genus $0$ over $\Q$, corresponding to quotients of Shimura curves by the full group of Atkin--Lehner involutions.

\subsection*{Setup and main result}

In this paper, we consider a different type of generalization: namely, from the point of congruence subgroups of triangle groups as introduced by Clark--Voight \cite{ClarkVoight}.  We briefly introduce this construction; for more detail, see \cref{sec:curveDef}. 

Let $a,b,c \in \Z_{\geq 2} \cup \{\infty\}$, and suppose that $1/a+1/b+1/c < 1$ (where $1/\infty=0$).  Then there is a triangle in the upper half-plane $\mathcal{H}$ (completed if $\infty \in \{a,b,c\}$) with angles $\pi/a$, $\pi/b$, and $\pi/c$, unique up to isometry.  The reflections in the sides of this triangle generate a discrete subgroup of $\PGL_2(\R)$, and the orientation-preserving subgroup (of index $2$) defines the \defi{triangle group} $\Delta=\Delta(a,b,c) \leq \PSL_2(\R)$, with presentation
\[ \Delta(a,b,c) = \langle \delta_a,\delta_b,\delta_c \,|\, \delta_a^a = \delta_b^b = \delta_c^c = \delta_a\delta_b\delta_c=1 \rangle \]
(omitting the relation $\delta_s^s$ when $s=\infty$).  The triangle group acts properly by isometries on $\calH$ and the quotient $X(a,b,c) \colonequals \Delta(a,b,c) \backslash \calH$ can be given the structure of a compact Riemann surface of genus $0$, isomorphic to $\PP^1$ with a unique coordinate $t$ taking values $0,1,\infty$ at the vertices labelled $a,b,c$, respectively.  For example, we recover the classical modular group as $\Delta(2,3,\infty) \simeq \PSL_2(\Z)$, with coordinate $t=j/1728$.

Let $m \colonequals \gcd(\{a,b,c\} \smallsetminus \{\infty\})$, with $m=1$ for $a=b=c=\infty$.  Attached to $(a,b,c)$ is an extension \begin{equation}
E=E(a,b,c) \subseteq F=F(a,b,c) \subseteq \Q(\zeta_{2m})^+
\end{equation}
of totally real, abelian number fields.  The field $F$ is the subfield of $\R$ generated by $\Tr \Delta$, and similarly $E$, called the \defi{invariant trace field}, is the subfield generated by $\Tr \Delta^{(2)}$ where $\Delta^{(2)} \leq \Delta$ is the subgroup generated by squares.  Let $\Z_E \subset E$ be the ring of integers and similarly $\Z_F \subset F$.

Let $\frakN \subseteq \Z_E$ be a nonzero ideal.  Then there is a natural reduction homomorphism $\varpi_{\frakN}$ with domain $\Delta$, intuitively thought of as reducing matrix entries modulo $\frakN$ but with a rigorous quaternionic interpretation (see below).  The kernel $\Gamma(a,b,c;\frakN) \colonequals \ker \varpi_{\frakN}$ is called the \defi{principal congruence subgroup of level $\frakN$}.  A subgroup $\Gamma \leq \Delta(a,b,c)$ is said to be \defi{congruence} if $\Gamma \geq \Gamma(a,b,c;\frakN)$ for some $\frakN$; the \defi{level} of a congruence subgroup is the minimal such $\frakN$.  Given a congruence subgroup $\Gamma \leq \Delta(a,b,c)$, we call the quotient $X(a,b,c;\Gamma) \colonequals \Gamma \backslash \calH$ a \defi{triangular modular curve}, since they generalize the classical modular curves.  The quotient map
\begin{equation}
\varphi_\frakN \colon X(a,b,c;\Gamma) \to X(a,b,c) \simeq \PP_\C^1
\end{equation}
(generalizing the $j$-invariant) is a Belyi map, unramified away from $\{0,1,\infty\}$ (by our normalization).  Accordingly, the curve $X(a,b,c;\frakN)$ descends to a number field \cite[Theorem B]{ClarkVoight}.

In light of the motivation above, we focus now on a nice class of congruence subgroups.
The $E$-subalgebra $A \colonequals E \langle \Delta^{(2)} \rangle \leq \M_2(\R)$ generated by (any lift of) the image of $\Delta^{(2)} \hookrightarrow \PSL_2(\R)$ is a quaternion algebra, and $\Lambda \colonequals \Z_E\langle \Delta \rangle$ is a $\Z_E$-order in $A$.  Then there is a commutative square
\begin{equation} \label{eqn:delta2intonormaliz}
\begin{aligned}
\xymatrix{
\Delta^{(2)} \ar@{^(->}[r] \ar@{^(->}[d] & \Lambda^1/\{\pm 1\} \ar@{^(->}[d] \\
\Delta \ar@{^(->}[r] & N_{A^\times}(\Lambda)/E^\times 
} 
\end{aligned}
\end{equation}
where $\Lambda^1 \colonequals \{\gamma \in \Lambda : \nrd(\gamma)=1\}$ are the elements of reduced norm $1$. 

Suppose $\frakN \subseteq \Z_E$ is coprime to $\discrd(\Lambda)$ and $\frakd_{F|E}$, the relative discriminant of $F$ over $E$; for example, this holds if $\frakN$ is coprime to $2abc$.  Then the reduction $\Lambda \to \Lambda/\frakN\Lambda \simeq \M_2(\Z_E/\frakN)$ gives a well-defined group homomorphism 
\begin{equation} \label{eqn:piNN}
\pi_\frakN \colon \Delta \to \PGL_2(\Z_E/\frakN).
\end{equation}
Combined with \eqref{eqn:delta2intonormaliz}, we obtain a commutative diagram (\Cref{prop:sl2andpgl2})
\begin{equation} \label{eqn:delta2pdwe}
\begin{aligned}
\xymatrix{
\Delta^{(2)} \ar@{->}[r] \ar@{^(->}[d] & \SL_2(\Z_E/\frakN)/\{\pm 1\} \ar@{->}[d] \\
\Delta \ar@{->}[r]^(.33){\pi_\frakN} & \PGL_2(\Z_E/\frakN) 
} 
\end{aligned}
\end{equation}
Let $G_\frakN \colonequals \pi_\frakN(\Delta)$ be the image.  Let $s^\sharp$ be the order of $\pi_\frakN(\delta_s)$ for $s=a,b,c$; then $a^\sharp,b^\sharp,c^\sharp$ are the ramification degrees in $\varphi_\frakN$ above $0,1,\infty$, and the homomorphism $\pi_\frakN$ factors through $\Delta(a^\sharp,b^\sharp,c^\sharp)$.  To avoid redundancy, we say that $\frakN$ is \defi{admissible} for $(a,b,c)$ if $s^\sharp=s$ for all $s=a,b,c$---so in particular, $s \neq \infty$.

Without loss of generality (but see \Cref{prop:otherCasesX0}), we now suppose that $\frakN$ is admissible for $(a,b,c)$.  Then the main result of Clark--Voight \cite[Theorem A]{ClarkVoight} (see \Cref{the:ClarkVoight}) describes the group $G_\frakN$.  For example, when $\frakN=\frakp$ is \emph{prime}, then 
\begin{equation} \label{eqn:GNW}
G_\frakN \simeq \PXL_2(\Z_E/\frakp) 
\end{equation}
where $\PXL_2=\PSL_2$ if $\frakp$ (necessarily unramified in $F$) splits completely in $F$, and otherwise $\PXL_2=\PGL_2$.

With this in mind, in this paper we focus on the case where $\frakN=\frakp$ is prime.  This case is already a quite interesting first step, and still relevant for our motivation (see the next section).  Moreover, the case of composite level $\frakN$ builds on the prime level case and at the same time introduces several new challenges that are not present in prime level.  We plan to pursue the general case in future work.

Returning now to our original motivation, the usual upper-triangular (Borel-type) subgroups 
\begin{equation}
H_{1,\frakp} \leq H_{0,\frakp} \colonequals \begin{pmatrix} * & * \\ 0 & * \end{pmatrix} \leq \PGL_2(\Z_E/\frakp)
\end{equation}
naturally include into $G_\frakp$ via \eqref{eqn:GNW}.  We define the Borel-type congruence subgroups of $\Delta$
\begin{equation}
\begin{aligned}
\Gamma_0(a,b,c;\frakp) &\colonequals \pi_\frakp^{-1}(H_{0,\frakp}) \\
\Gamma_1(a,b,c;\frakp) &\colonequals \pi_\frakp^{-1}(H_{1,\frakp}) \\
\end{aligned}
\end{equation}
We write $X_0(a,b,c;\frakp)$ and $X_1(a,b,c;\frakp)$ for the corresponding quotients.  
For $(a,b,c)=(2,3,\infty)$, we recover the classical modular curves $X_0(p)$ and $X_1(p)$.

Our main result is as follows.

\begin{theorem} \label{thm:mainthm}
For any $g \in \Z_{\geq 0}$, there are only finitely many Borel-type triangular modular curves of genus $g$ with admissible prime level $\frakN=\frakp$.  The number of curves of genus at most $2$ are as follows:
\begin{equation*}
\bgroup
\def\arraystretch{1.15}
    \begin{array}{c||c|c|c}
    \multicolumn{1}{c}{} & \multicolumn{3}{c}{\textup{genus}} \\
         &0 &1 &2 \\
    \hline\hline
    X_0(a,b,c;\frakp) & 69 & 248 & 453 \\ \hline
    X_1(a,b,c;\frakp) &6& 9 & 11 
    \end{array}
\egroup
\end{equation*}
\end{theorem}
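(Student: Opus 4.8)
The plan is to separate the two assertions: first the finiteness statement, then the explicit enumeration up to genus $2$. For finiteness, I would use the Riemann--Hurwitz formula applied to the Belyi map $\varphi_\frakp \colon X_0(a,b,c;\frakp) \to X(a,b,c) \simeq \PP^1$ (and likewise for $X_1$). The map has degree $d$ equal to the index $[\Delta : \Gamma_0(a,b,c;\frakp)] = [G_\frakp : H_{0,\frakp}]$, which by \eqref{eqn:GNW} is the number of points of $\PP^1(\Z_E/\frakp)$, hence on the order of $q = \Nm(\frakp)$ (times a bounded degree factor); ramification occurs only over $0,1,\infty$, with the ramification indices over each point being divisors of $a$, $b$, $c$ respectively (controlled by the cycle structure of $\pi_\frakp(\delta_s)$ acting on cosets). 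Writing $2g - 2 = d(-2) + \sum_{\text{ram}} (e_P - 1)$, and bounding the ramification contribution over each of the three points by $d(1 - 1/s)$ (with $s = \infty$ contributing the full $d$), one gets $2g - 2 \geq d\bigl(1 - 1/a - 1/b - 1/c\bigr) - (\text{bounded error})$. Since $1/a + 1/b + 1/c < 1$ is bounded away from $1$ once $(a,b,c)$ is fixed (and in fact $1 - 1/a - 1/b - 1/c \geq 1/42$ uniformly over hyperbolic triples), the genus grows at least linearly in $q$; moreover for fixed $g$ only finitely many triples $(a,b,c)$ can occur because the same inequality forces $1/a+1/b+1/c$ close to $1$ when $q$ is small, and there are finitely many such triples with bounded level. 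Hence for each $g$ only finitely many pairs $\bigl((a,b,c), \frakp\bigr)$ arise.

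For the enumeration, I would turn the bound above into an effective algorithm. Given $g \leq 2$, the inequality $2g-2 \geq d(1 - 1/a - 1/b - 1/c) - C$ with explicit $C$ bounds $q = \Nm(\frakp)$ in terms of $(a,b,c)$; combined with the constraint that $\frakp$ be admissible (so $s^\sharp = s$, forcing congruence conditions on $q$ relative to $a,b,c$ via the order of $\pi_\frakp(\delta_s)$ in $\PXL_2(\Z_E/\frakp)$) and coprime to $2abc$, this leaves a finite, explicitly enumerable list of candidate triples and primes. For each candidate one computes the permutation triple: $\pi_\frakp$ is determined by the images of $\delta_a,\delta_b,\delta_c$ in $\PXL_2(\Z_E/\frakp)$ — which by \cite[Theorem A]{ClarkVoight} (our \Cref{the:ClarkVoight}) and the known trace data are explicit semisimple elements of the stated orders — and then $\Gamma_0$ (resp.\ $\Gamma_1$) corresponds to the action on $\PP^1(\Z_E/\frakp)$ (resp.\ on the $(\Z_E/\frakp)^\times$-torsor of nonzero vectors up to $\pm 1$). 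From the cycle types of the three permutations one reads off all ramification indices and applies Riemann--Hurwitz to get the exact genus; one then tallies the curves with $g \in \{0,1,2\}$.

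The main obstacle I expect is not the finiteness argument, which is a clean Riemann--Hurwitz estimate, but making the search \emph{provably complete and non-redundant}: one must be careful that the "without loss of generality admissible" reduction is handled correctly (cf.\ \Cref{prop:otherCasesX0}), that triples $(a,b,c)$ related by permutation or by the passage to $(a^\sharp,b^\sharp,c^\sharp)$ are not double-counted, and that the splitting behavior of $\frakp$ in $F$ (which decides $\PSL$ vs.\ $\PGL$ in \eqref{eqn:GNW}, hence both the degree $d$ and the cycle structures) is tracked correctly for every candidate. There is also a bookkeeping subtlety in that $E$ itself varies with $(a,b,c)$, so "prime level $\frakp$" ranges over primes of varying residue degree; the bound on $\Nm(\frakp)$ must be uniform enough to cut this down to a finite computation. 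Once the candidate list is pinned down, the per-curve genus computation is a routine (if large) calculation best delegated to a computer, and the stated counts $69, 248, 453$ and $6, 9, 11$ are the output.
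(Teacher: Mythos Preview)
Your proposal is correct and follows essentially the same approach as the paper: Riemann--Hurwitz for the cover $X_0(a,b,c;\frakp)\to\PP^1$ combined with the uniform bound $|\chi(a,b,c)|\geq 1/42$ yields $q\leq 84(g+1)+1$ (the paper's \Cref{prop:boundOng}), admissibility then forces $a,b,c\leq q+1$, and the exact genus is read off from the cycle structure of $\pi_\frakp(\delta_s)$ acting on $\PP^1(\F_q)$ (resp.\ on $(\F_q^2\smallsetminus\{0\})/\{\pm 1\}$ for $X_1$), with the enumeration carried out by computer. The paper sharpens your sketch by giving closed-form cycle decompositions (\Cref{lem:cyclesDescription}, \Cref{lem:cycleDecompositionX1}) and hence an explicit genus formula (\Cref{the:genusX0}, \Cref{cor:genusX1}) rather than computing permutation triples case by case; it also notes that admissibility is the weaker condition $\frakp\nmid\discrd(\Lambda)\frakd_{F|E}$ rather than $\frakp\nmid 2abc$, and flags the special handling needed when $a=2$ and $q$ is odd (\Cref{lem:2cyclesDescriprtionPSL}).
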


Our proof of \Cref{thm:mainthm} includes a complete enumeration, computed using an implementation in \textsf{Magma} \cite{Magma} available online \cite{codeTMC} (including the list in computer readable format with additional data).  For the list with $g=0,1$ and prime level, see \Cref{app:tables}.

\subsection*{Discussion}

As for classical modular curves, \Cref{thm:mainthm} uses the Riemann--Hurwitz theorem.  We observe that the ramification at prime level takes a tidy form.  We carry out the explicit enumeration using the existence and classification results of Clark--Voight \cite{ClarkVoight}, which themselves ultimately rest on work of Macbeath \cite{Macbeath} classifying two-generated subgroups of $\SL_2(\F_q)$ in terms of trace triples. The case $a=2$ causes particular difficulties (see \Cref{rem:aEquals2}).

Our theorem has potential applications in arithmetic geometry analogous to classical modular curves.  Just as the quotient of the upper half-plane by $\PSL_2(\Z)$ is the set of complex points of the moduli space of elliptic curves (parametrized by the affine $j$-line), Cohen--Wolfart \cite[\S 3.3]{CohenWolfart} and Archinard \cite{Archinard} showed that the curves $X(a,b,c)$ over $\C$ naturally parametrize \emph{hypergeometric abelian varieties}, certain Prym varieties of cyclic covers of $\PP^1$ branched over $\leq 4$ points.  The name comes from the fact that their complex periods are values of ${}_2F_1$-hypergeometric functions for the parameter $t \in \PP^1(\C) \smallsetminus \{0,1,\infty\}$.  In accordance with Manin's ``unity of mathematics'' \cite{Clemens}, their point counts are defined by finite-field analogues of hypergeometric functions for $t \in \PP^1(\F_q) \smallsetminus \{0,1,\infty\}$;  these can be packaged together (in an $\ell$-adic Galois representation) to define hypergeometric $L$-functions attached to a motive for every $t \in \PP^1(\Q^{\textup{al}}) \smallsetminus \{0,1,\infty\}$. 

More generally, just as classical modular curves parametrize elliptic curves equipped with level structure, triangular modular curves parametrize hypergeometric abelian varieties equipped with level structure: see upcoming work of Kucharczyk--Voight \cite{kv} for the details, including a natural idelic refinement and a notion of canonical model.  In this light, our paper classifies those situations where we might parametrize \emph{infinitely many} such varieties with (nontrivial Borel-type) level structure for $t \in \Q$.  

As shown by Takeuchi \cite{TakeuchiArithmetic,TakeuchiCommesurability}, only finitely many triples $(a,b,c)$ give rise to arithmetic Fuchsian groups; the remaining triples are \emph{nonarithmetic}.  Thus almost all of the corresponding triangular modular curves are \emph{thin} subgroups of the adelic points of a quaternionic group, so fall outside the usual scope of the Langlands program.  

As a final possible Diophantine application, we recall work of Darmon \cite{Darmon14th}: he provides a dictionary between finite index subgroups of the triangle group $\Delta(a,b,c)$ and approaches to solve the generalized Fermat equation $x^a+y^b+z^c=0$.  From this vantage point, the triangular modular curves of low genus ``explain'' situations where the associated mod $\frakp$ Galois representations are reducible.  

In future work, we plan to compute equations for these curves (as Belyi maps) using the methods of Klug--Musty--Schiavone--Voight \cite{KMSV} and then to study their rational points.  Even without these equations, we have verified that all but a handful of the genus zero curves necessarily have a ramified rational point (hence are isomorphic to $\PP^1$ over any field of definition).  It would also be interesting to pursue cases when $\frakp$ ramifies in $A$, where the corresponding Galois covers will instead be solvable.

To conclude, we peek ahead to more general triangular modular curves, allowing other subgroups $\Gamma \leq \Gamma(a,b,c;\frakN)$ (prescribing other possible images of the corresponding Galois representations).  For the case $\Delta=\PSL_2(\Z)$, the story is a long and beautiful one, originating with a conjecture of Rademacher that there are only finitely many genus $0$ congruence subgroups of $\PSL_2(\Z)$.  Thompson \cite{Thompson} proved this for any genus $g$, but the list of Cummins--Pauli relies upon difficult and delicate $p$-adic methods of Cox--Parry \cite{CoxParry} for an explicit bound on the level in terms of the genus.  We propose the following conjecture, which predicts a similar result for triangular modular curves.

\begin{conj} \label{conj:gfintriang}
For all $g \in \Z_{\geq 0}$, there are only finitely many admissible triangular modular curves of genus $g$. 
\end{conj}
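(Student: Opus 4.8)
The plan is to follow the template of Thompson's proof \cite{Thompson} for congruence subgroups of $\PSL_2(\Z)$, in three steps. Let $X=X(a,b,c;\Gamma)$ be an admissible triangular modular curve of genus $g$, with admissible level $\frakN$, and set $d\colonequals[\Delta(a,b,c):\Gamma]$, the degree of the Belyi map $\varphi_\frakN\colon X\to\PP^1$. As $\varphi_\frakN$ is unramified away from $\{0,1,\infty\}$, Riemann--Hurwitz reads
\[ 2g-2 = d - N_0 - N_1 - N_\infty, \]
where $N_0,N_1,N_\infty$ are the numbers of points of $X$ over $0,1,\infty$ --- equivalently, the numbers of orbits of $\pi_\frakN(\delta_a),\pi_\frakN(\delta_b),\pi_\frakN(\delta_c)$ acting on $\overline\Gamma\backslash G_\frakN$. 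Since every such orbit has size dividing the corresponding one of $a,b,c$, one has $N_0+N_1+N_\infty\geq d(1/a+1/b+1/c)$, and we recall that $1-1/a-1/b-1/c\geq 1/42$ for every hyperbolic triple.

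\emph{Step 1 (bound $d$ by $g$).} I would show that the $\pi_\frakN(\delta_s)$ cannot all act with too many short orbits once $d$ is large, so that $N_0+N_1+N_\infty\leq(1-\epsilon)d$ for an absolute $\epsilon>0$ and hence $2g-2\geq\epsilon d$, bounding $d$. When $\frakN=\frakp$ is prime, $\overline\Gamma$ is a proper subgroup of $\PXL_2(\Z_E/\frakp)$ and the classification of such subgroups (Dickson; Macbeath \cite{Macbeath}) --- Borel, dihedral, exceptional, or $\PXL_2$ of a subfield --- reduces Step 1 to a case analysis; the Borel case is exactly the ramification analysis in the proof of \Cref{thm:mainthm}, and this route should settle \Cref{conj:gfintriang} for prime level. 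For composite $\frakN$ the clean input is a \emph{spectral gap} for the congruence covers $X(a,b,c;\frakN)\to X(a,b,c)$: in the finitely many arithmetic cases this is Selberg's theorem and its quaternionic analogues, while for the nonarithmetic triples the congruence images of the corresponding thin subgroups of $\SL_2$ are expanders by super-strong approximation (Bourgain--Gamburd--Varj\'u, Salehi Golsefidy--Varj\'u), which uniformly bounds the fixed-point counts of a fixed group element.

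\emph{Steps 2 and 3 (bound $\Nm(\frakN)$, then conclude).} Next I would bound $\Nm(\frakN)$ in terms of $d$ --- the analogue of Wohlfahrt's theorem refined by the $p$-adic estimates of Cox--Parry \cite{CoxParry}: minimality of the level forces $\overline\Gamma$ to omit the congruence kernel $\ker(G_\frakN\to G_\mathfrak{M})$ for every maximal proper divisor $\mathfrak{M}\mid\frakN$, and combining this with the near-simplicity of $\PXL_2$ over the finite local rings $\Z_E/\frakp^e$ (together with Serre's ``full level'' lemma in residue characteristic $\geq 5$) bounds the residue fields of the primes dividing $\frakN$, while a finer analysis of the filtration by congruence kernels bounds their exponents, yielding $\Nm(\frakN)\leq F(d)$ for an explicit $F$. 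With $\Nm(\frakN)$ bounded, admissibility finishes the proof: $\pi_\frakN(\delta_s)$ has order exactly $s$ and the orders of elements of $\PGL_2(\Z_E/\frakN)$ are bounded in terms of $\Nm(\frakN)$, so $(a,b,c)$ ranges over a finite set; hence $m=\gcd(\{a,b,c\}\smallsetminus\{\infty\})$ is bounded, $E\subseteq\Q(\zeta_{2m})^+$ lies among finitely many number fields, there are finitely many ideals $\frakN$ of bounded norm, finitely many groups $G_\frakN$, and finitely many subgroups $\overline\Gamma\leq G_\frakN$ --- hence finitely many curves.

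The main obstacle is Step 1 for composite level at the nonarithmetic triples, where one needs the spectral gap to be \emph{uniform over the infinitely many triples} $(a,b,c)$: this demands both a uniform super-strong approximation constant for the associated thin groups and a uniform comparison between the combinatorial gap of the Schreier graph $\mathrm{Sch}(\overline\Gamma\backslash G_\frakN,\{\delta_a,\delta_b,\delta_c\})$ and the analytic gap on the surface, whose usual constant degenerates as the cone angles $2\pi/a,2\pi/b,2\pi/c$ tend to $0$. Securing this uniformity --- or replacing Step 1 by a sufficiently robust analogue for triangle groups of the theorems of Wohlfahrt and Cox--Parry --- is where the real difficulty lies. (The case where $\frakp$ ramifies in the quaternion algebra $A$, so that $G_\frakN$ is solvable, falls outside this framework and would be handled separately, but should be comparatively routine.)
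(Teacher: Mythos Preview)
The statement you are addressing is \emph{Conjecture}~\ref{conj:gfintriang}: the paper explicitly leaves it open and offers no proof. The only remarks the paper makes are that \Cref{thm:mainthm} (the Borel case at prime level) constitutes partial progress, and that an effective proof would follow if the ``rather delicate $p$-adic methods of Cox--Parry'' could be generalized from $\PSL_2(\Z/N\Z)$ to $\PXL_2(\Z_E/\frakN)$. So there is no proof in the paper to compare your attempt against; you have written a plan of attack on an open problem, and you correctly flag it as such.

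Your outline follows Thompson's template sensibly, and you have located the genuine obstruction yourself: the uniformity, over the infinitely many triples $(a,b,c)$, of any spectral-gap or super-strong-approximation constant, together with the degeneration of the combinatorial--analytic comparison as the cone angles shrink. Your Steps~2--3 are precisely the Cox--Parry-type ingredient the paper singles out as missing, so your plan and the paper's suggestion converge on the same open problem rather than offering an alternative. One technical slip in your Step~1: the bound $N_0+N_1+N_\infty\le(1-\epsilon)d$ with an \emph{absolute} $\epsilon>0$ cannot hold for all $d$, since genus $0$ and $1$ curves exist (forcing $2g-2\le 0$); what the expander input actually gives, via Burnside applied to each $\pi_\frakN(\delta_s)$, is $N_0+N_1+N_\infty\le d(1/a+1/b+1/c)+O(d^{1-\delta})$, hence $2g-2\ge d\,\lvert\chi(a,b,c)\rvert-O(d^{1-\delta})\ge d/42-O(d^{1-\delta})$, which bounds $d$ once $d$ is large and leaves small $d$ to the subgroup-structure argument you already sketch.
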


We consider our main result (\Cref{thm:mainthm}) as partial progress towards this conjecture---the Borel--type subgroups are the family with the smallest growing index, thus likely to have the smallest genera.  It would be interesting to see if the rather delicate $p$-adic methods of Cox--Parry can be generalized from $\PSL_2(\Z/N\Z)$ to groups of the form $\PXL_2(\Z_E/\frakN)$, as this would imply \Cref{conj:gfintriang} in an effective way.

\subsection*{Contents}
In \cref{sec:curveDef}, we set up triangular modular curves and as a warmup consider the much easier Galois case $X(a,b,c;\frakp)$.  In \cref{sec:triang}, we extend the work of Clark--Voight to understand the arithmetic requirements to construct triangular modular curves.
Then in \cref{sec:X0}, for the case $X_0(a,b,c;\frakp)$ with $a,b,c \in \Z$, we give an explicit formula for the genus and we bound the norm of the level in terms of the genus, proving finiteness; we then provide an algorithm to effectively enumerate them in \cref{sec:enumeration}.  
In \cref{sec:X1}, we provide analogous results for curves $X_1(a,b,c;\frakN)$, and finally we prove \Cref{thm:mainthm}.  We conclude by providing the list in \Cref{app:tables}. 

\subsection*{Acknowledgements}

The authors would like to thank Asher Auel and Robert Kucharczyk for helpful conversations and the anonymous referees for their constructive feedback.  The authors were supported by a Simons Collaboration grant (550029, to Voight).

\section{Setup and definitions} \label{sec:curveDef}

In this section, we give some basic setup and notation, define congruence subgroups, and consider the enumeration problem in the Galois case; for further reference, see Clark--Voight \cite{ClarkVoight}.  

\subsection*{Triangle groups}

Beginning again, let $a,b,c\in\Z_{\ge 2}\cup\{\infty\}$.  Let
\begin{equation}\label{eqn:chi}
    \chi(a,b,c)\colonequals\frac{1}{a}+\frac1b+\frac1c-1
\end{equation}
so that $\chi(a,b,c)\pi$ measures difference from $\pi$ of the sum of the angles of a triangle with angles $\pi/a,\pi/b,\pi/c$.  If $\chi(a,b,c) \geq 0$, then such a triangle is drawn on the sphere or Euclidean plane, and these are very classical.  Otherwise, we $\chi(a,b,c)<0$ and we say that the triple $(a,b,c)$ is \defi{hyperbolic}, as then the triangle lies in the (completed) upper half-plane $\mathcal{H}$.  For a hyperbolic triple $(a,b,c)$, we always have
\begin{equation}\label{eqn:boudChi42}
    \chi(a,b,c)\le \chi(2,3,7)=-\frac{1}{42}
\end{equation}
bounded away from zero, by a simple maximization argument by cases.

As in the introduction, let $\triangle(a,b,c)$ be the subgroup of orientation-preserving isometries of the group generated by reflections in the sides of the triangle described above, drawn in the appropriate geometry.  Then we have a presentation
\begin{equation}
\label{eqn:trianglePresentation}
    \triangle(a,b,c)\colonequals\langle\delta_a,\delta_b,\delta_c\,|\,\delta_a^a=\delta_b^b=\delta_c^c=\delta_a\delta_b\delta_c=1\rangle.
\end{equation}
where $\delta_s$ corresponds to a counterclockwise rotation at the vertex with angle $2\pi/s$.  
    By cyclic permutation and inversion \cite[Remark 2.2]{ClarkVoight}, we can reorganize the generators and suppose without loss of generality that
    \begin{equation} \label{eqn:abcordered}
    a\le b\le c.
    \end{equation}

From now on, we suppose that the triple $(a,b,c)$ is hyperbolic.  Then there is an associated map $\triangle(a,b,c) \hookrightarrow \PSL_2(\R)$, unique up to conjugation.  We will often suppress the dependence on the triple from notation, writing for example $\Delta=\Delta(a,b,c)$.

The group $\Delta$ is said to be \defi{cocompact} if the quotient of the upper half-plane by $\Delta$ is compact, else we say $\Delta$ is \defi{noncocompact}.  We have $\Delta$ noncocompact if and only if at least one of $a,b,c$ is equal to $\infty$.

Let $\Delta^{(2)}$ denote the subgroup of $\Delta$ generated by the set of squares $\{\delta^2 : \delta \in \Delta\}$.  Then $\Delta^{(2)} \trianglelefteq \Delta$ is a normal subgroup, in fact \cite[(5.9)]{ClarkVoight} the quotient $\Delta/\Delta^{(2)}$ is represented by the elements $\delta_s$ with $s \in \{a,b,c\}$ such that either $s=\infty$ or $s \in \Z_{\geq 2}$ is even, hence
\begin{equation} \label{eqn:take124}
\Delta/\Delta^{(2)} \simeq
\begin{cases}
\{0\}, & \text{if at least two of $a,b,c$ are odd integers}; \\
\Z/2\Z, & \text{if exactly one of $a,b,c$ is an odd integer}; \\
(\Z/2\Z)^2, & \text{if all of $a,b,c$ are even integers or $\infty$}.
\end{cases}
\end{equation}

\begin{lemma} \label{lem:Delta2gens}
The group $\Delta^{(2)}$ is generated by the set
\begin{equation} 
\{ \delta_{s}^{-1}\delta_t^2\delta_s : s,t \in \{a,b,c\} \} \cup \{ \delta_s \delta_t \delta_s^{-1} \delta_t^{-1} : s,t \in \{a,b,c\}\}. 
\end{equation}
\end{lemma}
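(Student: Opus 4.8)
Write $N$ for the subgroup of $\Delta$ generated by the displayed set, which I will call $S$. The plan is to prove the inclusions $N\subseteq\Delta^{(2)}$ and $\Delta^{(2)}\subseteq N$ separately. The first is immediate: each element $\delta_s^{-1}\delta_t^2\delta_s=(\delta_s^{-1}\delta_t\delta_s)^2$ is a square, and each $\delta_s\delta_t\delta_s^{-1}\delta_t^{-1}=[\delta_s,\delta_t]$ lies in $[\Delta,\Delta]\subseteq\Delta^{(2)}$, since $\Delta/\Delta^{(2)}$ has exponent dividing $2$ (every square is trivial there) and is therefore abelian.

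For $\Delta^{(2)}\subseteq N$ I would argue by Reidemeister--Schreier rewriting, after recording two elementary facts. (i) Since $\delta_s^2\in S$ (the case $s=t$), the subgroup $\langle\delta_s^2\rangle$ lies in $N$ for each $s$; in particular $\delta_s=(\delta_s^2)^{(s+1)/2}\in N$ when $s$ is an odd integer, and $N$ (being a group) is closed under conjugation by its own elements. (ii) The relation $\delta_a\delta_b\delta_c=1$ gives $\delta_c=(\delta_a\delta_b)^{-1}$, hence $(\delta_a\delta_b)^2=(\delta_c^2)^{-1}$ and cyclically; in particular $\Delta=\langle\delta_a,\delta_b\rangle$. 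By \eqref{eqn:take124} the group $\Delta/\Delta^{(2)}$ is elementary abelian of rank at most $2$, generated by the images of $\delta_a$ and $\delta_b$, so $\Delta^{(2)}$ has a Schreier transversal $T\subseteq\{1,\delta_a,\delta_b,\delta_a\delta_b\}$ (of size $1$, $2$, or $4$). Applying the Reidemeister--Schreier process to the generating set $\{\delta_a,\delta_b\}$ of $\Delta$ and the transversal $T$ exhibits $\Delta^{(2)}$ as generated by the finitely many Schreier elements $t g\,\overline{tg}^{\,-1}$ with $t\in T$ and $g\in\{\delta_a,\delta_b\}$, where $\overline{w}\in T$ is the representative of the coset $\Delta^{(2)}w$.

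It then remains to check that each Schreier element lies in $N$, splitting into cases according to the index $[\Delta:\Delta^{(2)}]$ (equivalently, which of $a,b,c$ are even). When the index is $4$ the nontrivial Schreier elements are $\delta_a^2$, $\delta_b^2$, $[\delta_b,\delta_a]$, $\delta_a\delta_b\delta_a\delta_b^{-1}$, and $\delta_a\delta_b^2\delta_a^{-1}$: the first three lie in $S$, while by (i)--(ii) one has $\delta_a\delta_b\delta_a\delta_b^{-1}=(\delta_a\delta_b)^2\delta_b^{-2}=(\delta_c^2)^{-1}(\delta_b^2)^{-1}\in N$ and $\delta_a\delta_b^2\delta_a^{-1}=\delta_a^2(\delta_a^{-1}\delta_b^2\delta_a)\delta_a^{-2}\in N$. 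The index-$2$ and index-$1$ cases are entirely analogous, additionally using (i) to absorb the odd-indexed generators $\delta_s=(\delta_s^2)^{(s+1)/2}$ into $N$ and to carry out conjugations by elements of $N$. Combined with the first inclusion, this gives $\Delta^{(2)}=N$.

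I expect the only genuine work to be the bookkeeping in the last step: the transversal $T$, and hence the explicit list of Schreier elements, depends on the parities of $a,b,c$, and for each case each such element must be rewritten as a word in $S^{\pm1}$. The three identities that make every case close up are $\delta_s\delta_t^2\delta_s^{-1}=\delta_s^2(\delta_s^{-1}\delta_t^2\delta_s)\delta_s^{-2}$ (so $S$ controls conjugates of squares by generators on either side), $(\delta_a\delta_b)^2=(\delta_c^2)^{-1}$ and its cyclic variants (trading powers of $\delta_a\delta_b$ for powers of $\delta_c$), and $\delta_s=(\delta_s^2)^{(s+1)/2}$ for odd integers $s$.
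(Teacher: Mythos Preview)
Your argument is correct. The inclusion $N\subseteq\Delta^{(2)}$ is immediate as you say, and your Reidemeister--Schreier computation for the reverse inclusion goes through: the five nontrivial Schreier generators in the index-$4$ case are exactly the ones you list, and your rewritings of $\delta_a\delta_b\delta_a\delta_b^{-1}$ and $\delta_a\delta_b^2\delta_a^{-1}$ as words in $S^{\pm1}$ are valid. The index-$2$ and index-$1$ cases do require a short extra case split (according to which of $a,b,c$ is the odd one), and in the subcase where only $c$ is odd the Schreier generator $\delta_b\delta_a^{-1}$ takes a couple of lines to push into $N$ via $\delta_b\delta_a^{-1}=\delta_a^{-2}\cdot\delta_a\delta_c^{-1}\delta_a^{-1}$ and the identity $\delta_a\delta_c^2\delta_a^{-1}=\delta_a^2(\delta_a^{-1}\delta_c^2\delta_a)\delta_a^{-2}$; but this is exactly the bookkeeping you anticipated, and it closes up.

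The paper's own proof is entirely different in character: it simply cites Takeuchi's earlier work (\cite[Lemma~3, Proposition~5]{TakeuchiArithmetic}), where case-by-case smaller generating sets for $\Delta^{(2)}$ are already established, and observes that the displayed set $S$ is a symmetrized union containing each of Takeuchi's sets. Your approach is self-contained and constructive, effectively reproving Takeuchi's lemma from scratch via Reidemeister--Schreier; the paper's approach is shorter but outsources the work. If you want to keep your argument, it would be worth noting that it recovers Takeuchi's result directly rather than invoking it.
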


\begin{proof}
Follows from Takeuchi \cite[Lemma 3, Proposition 5]{TakeuchiArithmetic}: the generating set presented there is smaller (depending on cases), whereas we collect these and symmetrize to make a uniform statement.
\end{proof}

\subsection*{Quaternions}

For $s \in \Z_{\geq 2} \cup \{\infty\}$, let $\zeta_s \colonequals \exp(2\pi i/s)$ and let $\lambda_s \colonequals \zeta_s+1/\zeta_s=2\cos(2\pi/s)$, with $\zeta_\infty=1$ and $\lambda_\infty=2$ by convention.  Define the tower of fields
\begin{equation}
\begin{tikzcd}
    F=F(a,b,c)\colonequals\Q(\lambda_{2a},\lambda_{2b},\lambda_{2c}) \arrow[d,dash] \\
    E=E(a,b,c)\colonequals\Q(\lambda_a,\lambda_b,\lambda_c,\lambda_{2a}\lambda_{2b}\lambda_{2c}).
\end{tikzcd}    
\end{equation}
The extension $F \supseteq E$ is abelian of exponent at most $2$ (since $\lambda_{2s}^2 = \lambda_s+2$) and has degree at most $4$.  Let $\Z_F \supseteq \Z_E$ be the corresponding rings of integers, and let $\frakd_{F|E}$ be the relative discriminant of $F \,|\, E$.  The field $F$ is the trace field of the image of $\triangle$ in $\PSL_2(\R)$, and $E$ the trace field for $\Delta^{(2)}$, also called the \defi{invariant trace field} (see Maclachlan--Reid \cite[Section 5.5]{MaclachlanReid}).

As above, we have a map $\Delta \hookrightarrow \PSL_2(\R)$; the 
$F$-subalgebra $B \colonequals F \langle \Delta \rangle \leq \M_2(\R)$ generated by any lift of the image (well-defined, since $-1 \in F$) is a quaternion algebra, similarly $\calO \colonequals \Z_F\langle \Delta \rangle$ is a $\Z_F$-order in $B$ \cite[Propositions 2--3]{Takeuchi00}.  The reduced discriminant of $\calO$ is a principal ideal of $\Z_F$ generated by \cite[Lemma~5.4]{ClarkVoight}
\begin{equation}\label{eqn:beta}
    \beta(a,b,c) \colonequals \lambda_{2a}^2+\lambda_{2b}^2+\lambda_{2c}^2+\lambda_{2a}\lambda_{2b}\lambda_{2c} - 4 = \lambda_a + \lambda_b+\lambda_c + \lambda_{2a}\lambda_{2b}\lambda_{2c}+2 \in \Z_E.
\end{equation}

The same construction applies to $\Delta^{(2)}$, yielding a quaternion $E$-algebra $A$ and a $\Z_E$-order $\Lambda$.  Let $\calO^1 \colonequals \{\gamma \in \calO : \nrd(\gamma)=1\}$ be the elements of reduced norm $1$ in $\calO$, and define $\Lambda^1$ similarly.  Then we have a commutative square of group homomorphisms 
\begin{equation} \label{eqn:delta12}
\begin{aligned}
\xymatrix{
\Delta^{(2)} \ar@{^(->}[r] \ar@{^(->}[d] & \Lambda^1/\{\pm 1\} \ar@{^(->}[d] \\
\Delta \ar@{^(->}[r] & \calO^1/\{\pm 1\} 
} 
\end{aligned}
\end{equation}
In fact, the bottom map descends to the \emph{normalizer} $N_{A}(\Lambda)$ of $\Lambda$ in $A$, as follows.

\begin{lemma} \label{lem:descenttonormalizer}
The composition of the maps
\[ \Delta \hookrightarrow \frac{\calO^1}{\{\pm 1\}} \hookrightarrow \frac{N_{B^\times}(\calO)}{F^\times} \]
factors via the map
\begin{equation} \label{eqn:deltaembed}
\begin{aligned}
\Delta &\hookrightarrow \frac{N_{A^\times}(\Lambda)}{E^\times} \\
\delta_s &\mapsto \begin{cases}
\delta_s^2+1 = \lambda_{2s}\delta_s, &\textup{if $s \neq 2$}; \\
(\delta_c^2+1)(\delta_b^2+1) = \lambda_{2b}\lambda_{2c}\delta_a, & \textup{if $s=a=2$};
\end{cases}
\end{aligned}
\end{equation}
followed by the natural inclusion
$N_{A^\times}(\Lambda)/E^\times \hookrightarrow N_{B^\times}(\calO)/F^\times$. 
\end{lemma}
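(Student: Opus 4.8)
The plan is to exhibit explicit normalizing elements, check that they lie in $N_{A^\times}(\Lambda)$, and then deduce the factorization formally, using that the composite $\Delta \hookrightarrow \calO^1/\{\pm 1\} \hookrightarrow N_{B^\times}(\calO)/F^\times$ is already an injective homomorphism and that $A \hookrightarrow B$ identifies $N_{A^\times}(\Lambda)/E^\times$ with a subgroup. The one computational input is the minimal polynomial of $\delta_s$: choosing the lift of $\delta_s$ to $\calO^1 \subseteq B$ with $\trd(\delta_s) = \lambda_{2s}$ (recall $\nrd(\delta_s)=1$), Cayley--Hamilton gives $\delta_s^2 - \lambda_{2s}\delta_s + 1 = 0$, i.e.\ $\delta_s^2 + 1 = \lambda_{2s}\delta_s$ in $B$. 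Since $\lambda_{2s} = 2\cos(\pi/s)$ is nonzero exactly when $s \neq 2$, this writes $\delta_s^2+1$ as a nonzero $F^\times$-multiple of $\delta_s$ precisely in that case, which is the source of the case distinction. When $a=2$ we have $b,c \geq 3$ by \eqref{eqn:abcordered}, so $\lambda_{2b}\lambda_{2c} \neq 0$; multiplying the identities for $b$ and $c$ and using $\delta_b\delta_c = \delta_a^{-1} = \delta_a$ (as $\delta_a^2=1$) presents the product of $\delta_b^2+1$ and $\delta_c^2+1$ as $\lambda_{2b}\lambda_{2c}\,\delta_a$, up to a sign in the lifts that is immaterial modulo scalars.

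Write $\mu_s \colonequals \delta_s^2+1$ for $s \neq 2$ and $\mu_a \colonequals (\delta_b^2+1)(\delta_c^2+1)$ when $a=2$. I would then verify $\mu_s \in N_{A^\times}(\Lambda)$. Membership in $A$: $\delta_s^2 \in \Delta^{(2)} \subseteq \Lambda$ and $1 \in \Lambda$ (the case $a=2$ being a product of such). Invertibility in $A$: for $s \neq 2$ the identity gives $\delta_s^{-1} = \lambda_{2s} - \delta_s$, hence $\mu_s^{-1} = 1 - \lambda_{2s}^{-2}\mu_s$ with $\lambda_{2s}^{-2} = (\lambda_s+2)^{-1} \in E^\times$ (using $\lambda_{2s}^2 = \lambda_s+2$), so $\mu_s^{-1} \in A$; the case $a=2$ reduces to a product. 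Normalization of $\Lambda$: conjugation by $\mu_s$ equals conjugation by $\delta_s$, since the scalars $\lambda_{2s}$ resp.\ $\lambda_{2b}\lambda_{2c}$ lie in $F^\times = Z(B)^\times$; and conjugation by $\delta_s \in \Delta$ preserves $\Delta^{(2)}$ (normal in $\Delta$) and fixes $E \subseteq Z(B)$, hence preserves $\Lambda = \Z_E\langle\Delta^{(2)}\rangle$. For $a=2$, $\mu_a = \mu_b\mu_c$ is a product of elements of the group $N_{A^\times}(\Lambda)$.

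Finally, the inclusion $A \hookrightarrow B$ (with $E \subseteq F$) induces $\iota \colon N_{A^\times}(\Lambda)/E^\times \to N_{B^\times}(\calO)/F^\times$, which is injective because $F = Z(B)$ forces $A \cap F = E$. By the identity above, the image of $\mu_s$ in $N_{B^\times}(\calO)/F^\times$ is the class of $\delta_s$; these classes generate the image of $\Delta$, and each lies in $\iota\bigl(N_{A^\times}(\Lambda)/E^\times\bigr)$, so the whole image of $\Delta$ does. Pulling back along the injective $\iota$ then yields a homomorphism $\Delta \hookrightarrow N_{A^\times}(\Lambda)/E^\times$ sending $\delta_s$ to the class of $\mu_s$ and composing with $\iota$ back to the original composite; no separate check of the triangle relations is needed, as they are transported from the (isomorphic) image of $\Delta$. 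I expect the delicate points to be the sign bookkeeping for the lifts---harmless modulo $F^\times$---and, in the $a=2$ case, ensuring one multiplies $\delta_b^2+1$ and $\delta_c^2+1$ in the order for which the product represents the class of $\delta_a$ rather than of a conjugate.
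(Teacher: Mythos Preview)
Your argument is correct and supplies precisely the content the paper defers elsewhere: the paper's own proof is just the citation to Clark--Voight \cite[Proposition~5.13]{ClarkVoight} together with the parenthetical remark that $\lambda_4=0$ forces a separate treatment when $a=2$. The Cayley--Hamilton identity $\delta_s^2+1=\lambda_{2s}\delta_s$, the check that $\mu_s\in N_{A^\times}(\Lambda)$ via $\lambda_{2s}^{2}=\lambda_s+2\in E^\times$ and normality of $\Delta^{(2)}\trianglelefteq\Delta$, and the formal pullback along the injective $\iota$ (using $A\cap F=Z(A)=E$) are exactly the expected ingredients, so your approach and the cited one coincide.

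Your closing caveat about the multiplication order when $a=2$ is well placed and in fact sharper than the printed display. From $\delta_a\delta_b\delta_c=1$ one has $\delta_b\delta_c=\delta_a^{-1}$, so your product
\[
(\delta_b^2+1)(\delta_c^2+1)=\lambda_{2b}\lambda_{2c}\,\delta_b\delta_c
\]
lies in $F^\times\delta_a$ (since $\delta_a^{-1}=\pm\delta_a$ in $\calO^1$ when $a=2$). By contrast the order written in the lemma gives $(\delta_c^2+1)(\delta_b^2+1)=\lambda_{2b}\lambda_{2c}\,\delta_c\delta_b=\lambda_{2b}\lambda_{2c}\,\delta_b^{-1}\delta_a^{-1}\delta_b$, which is only an $F^\times$-multiple of a \emph{conjugate} of $\delta_a$; so the order you chose is the one for which the displayed equality holds literally, and your awareness of this point is the right instinct.
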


\begin{proof}
See Clark--Voight \cite[Proposition 5.13]{ClarkVoight}.  (The description fails to be uniform when $a=2$ because $\lambda_4=0$; since $a \leq b \leq c$ we must have $b > 2$, else $(a,b,c)$ is not hyperbolic.  The map is nevertheless uniquely determined, since $\delta_a\delta_b\delta_c=1$.)
\end{proof}

\subsection*{Congruence subgroups: general definition}

We now define congruence subgroups.  Let $\frakN \subseteq \Z_E$ be a nonzero ideal.  Then reducing elements modulo $\frakN$, as in \eqref{eqn:delta12} we obtain a commutative diagram
\begin{equation} \label{eqn:delta12NN}
\begin{aligned}
\xymatrix{
1 \ar[r] & \Gamma^{(2)}(\frakN) \ar[r] \ar@{^(->}[d] & \Delta^{(2)} \ar[r] \ar@{^(->}[d] & (\Lambda/\frakN\Lambda)^1/\{\pm 1\} \ar@{^(->}[d] \\
1 \ar[r] & \Gamma(\frakN) \ar[r] & \Delta \ar[r]^(.3){\varpi_\frakN} & (\calO/\frakN\calO)^1/\{\pm 1\}
} 
\end{aligned}
\end{equation}
but now with kernels in the rows: in particular, we have a group homomorphism
\begin{equation} \label{eqn:homomorphismPSL}
\varpi_\frakN \colon \Delta \to (\calO/\frakN\calO)/\{\pm 1\}
\end{equation}
with kernel
\begin{equation} 
\Gamma(\frakN) \colonequals \ker \pi_\frakN = \{ \delta \in \Delta : \delta \equiv \pm 1 \psmod{\frakN\calO} \} \trianglelefteq \Delta 
\end{equation}
called the \defi{principal congruence subgroup of level $\frakN$}.  As in the introduction, we define \defi{congruence subgroups} of $\Delta$ to be those that contain a principal congruence subgroup, and a \defi{triangular modular curve} to be a quotient of the (completed) upper half-plane by a congruence subgroup of a triangle group, for example
\begin{equation} 
X(\frakN)=X(a,b,c;\frakN) \colonequals \Gamma(\frakN) \backslash \mathcal{H}
\end{equation}
are called the \defi{principal} triangular modular curves.

\begin{remark}
One could work more generally with ideals of $\Z_F$ instead, arriving at the same definition of congruence subgroups but with a different notion of level.  In light of what follows, especially the robust failure of $\varpi_\frakN$ to be surjective, we prefer to work with levels in $\Z_E$.
\end{remark}

Since $\Delta$ normalizes $\Delta^{(2)}$ and therefore $\Lambda$ and $\frakN\Lambda$, there is descent to the normalizer as in \Cref{lem:descenttonormalizer}.  However, the precise description of $\Gamma(\frakN)$ depends on the ramification behavior of the primes dividing $\frakN$ in the extension $F\,|\,E$ and in the algebras $A$ and $B$ (and this already introduces some subtleties when $\frakN$ is composite).  We pursue this in the next section.

\subsection*{Galois case}

Before proceeding, as a warmup we consider the curves $X(a,b,c;\frakp)$ corresponding to principal congruence subgroups, where $X(a,b,c;\frakp) \to X(a,b,c) \simeq \PP^1$ is a generically Galois Belyi map.

Quite generally, for any generically Galois Belyi map with group $G$, the ramification indices above each ramification point are equal.  Without loss of generality, we may suppose that $a,b,c$ are also the orders of the ramification points.  Thus the Riemann-Hurwitz formula gives
\begin{equation}
    2g(X)-2=-2(\#G) + \sum_{s=a,b,c}\frac{\#G}{s}(s-1)
\end{equation}
which simplifies to
\begin{equation}\label{eqn:genusX}
g(X) = 1-\frac{\#G}{2}\chi(a,b,c).
\end{equation}
From this genus formula and \eqref{eqn:boudChi42}, we can conclude that, for any fixed genus $g_0\ge 0$, there are finitely many hyperbolic $G$-Galois Belyi maps with genus $g_0$.  

We are of course interested in the special case where
\[ G=\Gamma(\frakp) \backslash \Delta \simeq \PXL_2(\F_\frakp) \]
where $\F_\frakp \colonequals \Z_E/\frakp$ is the residue field and $\PXL_2(\F_\frakp)$ denotes either $\PSL_2(\F_\frakp)$ or $\PGL_2(\F_\frakp)$.  (The major task in the next section is to precisely investigate this arithmetically.)  Plugging $G=\PXL_2(\F_q)$ into the above:
\begin{equation*}
    84(g_0-1)\ge \#G=q(q+1)(q-1)\cdot \begin{cases} 1/2, &\text{ if $G=\PSL_2(\F_q)$ and $q$ is odd};\\ 
    1, & \text{ otherwise}.
    \end{cases}
\end{equation*}
Thus, there are no curves $X(a,b,c;\frakp)$ of genus at most $1$.
For genus 2, we can use the inequality to see that $q$ must be less than 6, so $\#G\le 60$ and, if $g(X(a,b,c;\frakp))=2$, then
\begin{equation*}
    -\frac{1}{\chi(a,b,c)}\le 30.
\end{equation*}
This inequality implies that $a\le b\le c\le 7$ and, by checking the genera of these possibilities with \eqref{eqn:genusX}, we conclude that there are no curves $X(a,b,c;\frakp)$ of genus 2.  

In fact, the smallest genus for a hyperbolic triple with $a,b,c \in \Z_{\geq 2}$ is genus $3$ for $(a,b,c)=(2,3,7)$, yielding the famed Klein quartic curve.  More generally, see Clark--Voight \cite[Table 10.5]{ClarkVoight} for examples up to genus $24$.

\section{Triangular modular curves} \label{sec:triang}

In this section, we study triangular modular curves generalizing the classical modular curves; the main results are \Cref{prop:sl2andpgl2}, where we define the relevant matrix representation of $\Delta$, and Theorem \ref{the:ClarkVoight}, describing its image building on work of Clark--Voight \cite{ClarkVoight}.  Throughout, we retain our notation from the previous section.

\subsection*{Congruence subgroups: matrix case}

We return to \eqref{eqn:delta12NN}, and identify matrix groups.   Recalling \eqref{eqn:beta}, we first suppose that $\beta = \discrd \calO$ is coprime to $\frakN$, so all primes $\frakp \mid \frakN$ are unramified in $B$ but more strongly we have $(\calO/\frakN\calO)^1/\{\pm 1\} \simeq \SL_2(\Z_F/\frakN\Z_F)/\{\pm 1\}$.  

For the order $\Lambda$, we recall \Cref{lem:Delta2gens}: given $a,b,c$, we can compute its $\Z_E$-module span in $A$ and therefore a $\Z_E$-pseudobasis for $\Lambda$, hence its reduced discriminant. Since $\Lambda\Z_F \subseteq \calO$, we have $\beta \mid \discrd(\Lambda)$ \cite[Corollary 5.17]{ClarkVoight}.

So we make the stronger assumption that $\frakN$ is coprime to $\discrd(\Lambda)$.  Then from \eqref{eqn:delta12} we get
\begin{equation} \label{eqn:delta12withmats}
\begin{aligned}
\xymatrix{
\Delta^{(2)} \ar[r] \ar@{^(->}[d] & (\Lambda/\frakN\Lambda)^1/\{\pm 1\} \ar@{^(->}[d] \ar[r]^(0.475){\sim} & \SL_2(\Z_E/\frakN)/\{\pm 1\}\\
\Delta \ar[r]^(0.3){\pi_\frakN} & (\calO/\frakN\calO)^1/\{\pm 1\}  \ar[r]^(0.45){\sim} & \SL_2(\Z_F/\frakN\Z_F)/\{\pm 1\}
} 
\end{aligned}
\end{equation}

To descend the bottom map to the normalizer as in \Cref{lem:descenttonormalizer}, we restrict our scope taking $\frakN=\frakp$ prime and work just a little bit more.  

Let
\begin{equation}
\Z_{E,(\frakp)} \colonequals \{\alpha \in E : \ord_\frakp(\alpha) \geq 0\} \subseteq E
\end{equation}
be the localization of $\Z_E$ at the ideal $\frakp$ (all elements coprime to $\frakp$ become units).

\begin{lemma} \label{lem:uupstoit}
Suppose that $\frakp \nmid \frakd_{F|E}$.  Then for $s=a,b,c$, we can write
\begin{equation} \label{eqn:uups}
\lambda_s+2 = \upsilon_s \theta_s^2 \in E^\times 
\end{equation}
with:
\begin{itemize}
\item $\upsilon_s \in \Z_{E,(\frakp)}^\times$, well-defined up to multiplication by an element of $\Z_{E,(\frakp)}^{\times 2}$, i.e., up to the square of an element of $\Z_{E,(\frakp)}^{\times}$, and 
\item $\theta_s \in E^\times$, well-defined up to $\Z_{E,(\frakp)}^{\times}$.  
\end{itemize}
If $\frakp$ is coprime to $2abc$, then we may take $\theta_s=1$ and $\upsilon_s=\lambda_s+2$.

Moreover, the prime $\frakp$ (necessarily unramified in $F$) splits completely in $F$ if and only if the Kronecker symbols $(\upsilon_s\,|\,\frakp)=1$ are trivial for all $s=a,b,c$.
\end{lemma}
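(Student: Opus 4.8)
The plan is to handle one index $s \in \{a,b,c\}$ and the prime $\frakp$ at a time, using that $F$ is generated over $E$ by the square roots $\lambda_{2s} = \sqrt{\lambda_s+2}$ (recall $\lambda_{2s}^2 = \lambda_s+2$), so that $F$ is the compositum over $E$ of the at-most-quadratic subextensions $E(\sqrt{\lambda_s+2})/E$. (If $s \in \{2,\infty\}$ then $\lambda_s+2$ equals $0$ or $4$, that factor of the compositum is trivial, and one sets $\upsilon_s := 1$ and $\theta_s := 1$; so assume $\lambda_s+2 \in E^\times$.) First I would produce the factorization \eqref{eqn:uups}: since $\frakp \nmid \frakd_{F|E}$ the prime $\frakp$ is unramified in $F$, hence in $E(\sqrt{\lambda_s+2})/E$, and this forces $\ord_\frakp(\lambda_s+2)$ to be even---otherwise $\sqrt{\lambda_s+2}$ would acquire valuation in $\tfrac12\Z \smallsetminus \Z$, forcing ramification index $2$ in $E_\frakp(\sqrt{\lambda_s+2})/E_\frakp$, a contradiction that is uniform in the residue characteristic. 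Choosing any $\theta_s \in E^\times$ with $\ord_\frakp(\theta_s) = \tfrac12\ord_\frakp(\lambda_s+2)$ then makes $\upsilon_s := (\lambda_s+2)/\theta_s^2$ a $\frakp$-unit. For the ambiguity statements: if $\upsilon_s\theta_s^2 = \upsilon_s'(\theta_s')^2$ with both $\upsilon$'s $\frakp$-units, then $\upsilon_s/\upsilon_s' = (\theta_s'/\theta_s)^2$ is a $\frakp$-unit that is a square, so $\theta_s'/\theta_s \in \Z_{E,(\frakp)}^\times$ and hence $\upsilon_s/\upsilon_s' \in \Z_{E,(\frakp)}^{\times 2}$---exactly the well-definedness claimed in the lemma.

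Next, when $\frakp$ is coprime to $2abc$, I would check that $\lambda_s+2$ is already a $\frakp$-unit, so $\theta_s = 1$ works: writing $\lambda_s+2 = \lambda_{2s}^2 = \zeta_s^{-1}(1+\zeta_s)^2$, the only thing to verify is that $1+\zeta_s$ is a $\frakp$-unit, which holds because $\frakp \nmid 2s$ keeps the $2s$-th roots of unity distinct modulo $\frakp$, so $\zeta_s \not\equiv -1 \psmod{\frakp}$. (Equivalently, the norm of the cyclotomic integer $\lambda_{2s}$ is supported on the primes dividing $2s$.) For the splitting criterion I would first note $E(\sqrt{\lambda_s+2}) = E(\sqrt{\upsilon_s})$, since these radicands differ by the square $\theta_s^2$; hence $F = E(\sqrt{\upsilon_a},\sqrt{\upsilon_b},\sqrt{\upsilon_c})$. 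Then $\frakp$ (unramified) splits completely in $F$ iff $\Frob_\frakp$ is trivial in $\Gal(F/E)$, which embeds into $(\Z/2\Z)^3$ by its actions on the three $\sqrt{\upsilon_s}$; so $\Frob_\frakp = 1$ iff it is trivial on each $\sqrt{\upsilon_s}$, iff $\frakp$ splits in each $E(\sqrt{\upsilon_s})/E$, iff each $\upsilon_s$ is a square in the completion $E_\frakp$. For $\frakp \nmid 2$, Hensel's lemma identifies ``$\upsilon_s$ a square in $E_\frakp$'' (for the $\frakp$-unit $\upsilon_s$) with ``$\overline{\upsilon_s}$ a square in $\F_\frakp^\times$'', i.e.\ with $(\upsilon_s \mid \frakp) = 1$; and since the quadratic residue symbol is multiplicative and hence invariant under multiplication by $\Z_{E,(\frakp)}^{\times 2}$, this value does not depend on the choice of $\upsilon_s$.

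The main obstacle is the dyadic case $\frakp \mid 2$, where the argument via the residue field collapses (every element of $\F_\frakp$ is a square) and valuation parity no longer characterizes unramifiedness. Here I would lean on the hypothesis $\frakp \nmid \frakd_{F|E}$ decisively: it guarantees that $E_\frakp(\sqrt{\upsilon_s})/E_\frakp$ is unramified, hence equal to $E_\frakp$ or to the unique unramified quadratic extension, so that $\frakp$ splits in $E(\sqrt{\upsilon_s})/E$ if and only if $\upsilon_s \in (E_\frakp^\times)^2$---which is precisely what $(\upsilon_s \mid \frakp) = 1$ means under the standard convention for the Kronecker symbol at dyadic primes (equivalently, $\Frob_\frakp$ fixes $\sqrt{\upsilon_s}$). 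With this convention fixed, the final equivalence holds at all $\frakp$ simultaneously and the lemma assembles from the pieces above.
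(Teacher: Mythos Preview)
Your argument is correct and follows essentially the same route as the paper's proof: both deduce that $\ord_\frakp(\lambda_s+2)$ is even from the hypothesis $\frakp \nmid \frakd_{F|E}$ (via unramifiedness of $\frakp$ in the subextension $E(\lambda_{2s})/E$), factor accordingly, and then invoke the standard splitting criterion in multiquadratic extensions. The paper phrases the factorization step through weak approximation (with an eye toward composite level), whereas you simply choose $\theta_s$ of the right $\frakp$-valuation; these are equivalent here. Your treatment is in fact more thorough than the paper's brief sketch---you spell out the well-definedness, the coprime-to-$2abc$ case, and the dyadic subtlety in the Kronecker symbol, all of which the paper leaves implicit.
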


\begin{proof}
First, a bit of generality: for $\alpha \in E^\times$ with even valuation at all primes $\frakp \mid \frakN$, by weak approximation in $E$ we can write
\begin{equation} \label{eqn:uups0}
\alpha = \upsilon \theta^2 \in E^\times 
\end{equation}
with $\upsilon,\theta$ as in the statement of the lemma.  

Now to apply this, we observe that $F=E(\lambda_{2a},\lambda_{2b},\lambda_{2c})$ and recall that $\lambda_{2s}^2=\lambda_s+2$.  By hypothesis, we have $\frakp \nmid \frakd_{F|E}$; in particular the elements $\lambda_s+2$ must have even (nonnegative) valuation at $\frakp$.  Thus \eqref{eqn:uups0} applies, giving \eqref{eqn:uups}.  The final statement follows from the usual splitting criterion in quadratic fields.
\end{proof}

We obtain the following result.

\begin{proposition} \label{prop:sl2andpgl2}
Suppose that $\frakp \nmid \discrd(\Lambda)\frakd_{F|E}$.  Then there is a commutative diagram
\begin{equation} \label{eqn:delta2pdwe0}
\begin{aligned}
\xymatrix{
\Delta^{(2)} \ar@{->}[r] \ar@{^(->}[d] & \SL_2(\Z_E/\frakp)/\{\pm 1\} \ar@{->}[d] \\
\Delta \ar@{->}[r]^(.33){\pi_\frakN} & \PGL_2(\Z_E/\frakp) 
} 
\end{aligned}
\end{equation}
and the map $\pi_\frakN \colon \Delta \to \PGL_2(\Z_E/\frakN)$ factors through $\varpi_\frakN$.
\end{proposition}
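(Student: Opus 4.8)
\emph{Proof plan.}
The plan is to deduce \eqref{eqn:delta2pdwe0} and the factorization by reducing the commutative square of \Cref{lem:descenttonormalizer} (equivalently \eqref{eqn:delta12}) modulo $\frakp$ and identifying the reductions with matrix groups over the residue field $\F_\frakp \colonequals \Z_E/\frakp$. First I would record what the hypotheses give, via \eqref{eqn:delta12withmats}: since $\frakp \nmid \discrd(\Lambda)$ and $\beta = \discrd(\calO) \mid \discrd(\Lambda)$, reduction modulo $\frakp$ yields $\Lambda/\frakp\Lambda \iso \M_2(\F_\frakp)$ and $\calO/\frakp\calO \iso \M_2(\Z_F/\frakp\Z_F) = \prod_{\frakP\mid\frakp}\M_2(\F_\frakP)$, compatibly with $\Lambda/\frakp\Lambda \hookrightarrow \calO/\frakp\calO$ via the diagonal map $\F_\frakp \hookrightarrow \Z_F/\frakp\Z_F$; and since $\frakp \nmid \frakd_{F|E}$, the prime $\frakp$ is unramified in $F$ (so $\Z_F/\frakp\Z_F$ is \'etale over $\F_\frakp$ and this diagonal map is a ring embedding), and we are in the setting of \Cref{lem:uupstoit}, whence $\ord_\frakp(\lambda_s+2)$ is even for each $s = a,b,c$ with $s \neq 2$ --- this evenness is what will let the bottom map of \eqref{eqn:delta2pdwe0} land in $\PGL_2(\F_\frakp)$ and not merely in $\PGL_2(\Z_F/\frakp\Z_F)$.

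Next I would build the four maps of \eqref{eqn:delta2pdwe0}. The top arrow is $\Delta^{(2)} \hookrightarrow \Lambda^1/\{\pm1\}$ followed by reduction onto $(\Lambda/\frakp\Lambda)^1/\{\pm1\} = \SL_2(\F_\frakp)/\{\pm1\} = \PSL_2(\F_\frakp)$. For the bottom arrow: over the complete discrete valuation ring $\Z_{E,\frakp}$ the normalizer of $\M_2(\Z_{E,\frakp})$ in $\GL_2(E_\frakp)$ is $E_\frakp^\times \cdot \GL_2(\Z_{E,\frakp})$ (a standard valuation computation), so the embedding $\Delta \hookrightarrow N_{A^\times}(\Lambda)/E^\times$ of \Cref{lem:descenttonormalizer} composes with $N_{A^\times}(\Lambda)/E^\times \to E_\frakp^\times\GL_2(\Z_{E,\frakp})/E_\frakp^\times \iso \GL_2(\Z_{E,\frakp})/\Z_{E,\frakp}^\times$ and then with reduction $\GL_2(\Z_{E,\frakp})/\Z_{E,\frakp}^\times \to \GL_2(\F_\frakp)/\F_\frakp^\times = \PGL_2(\F_\frakp)$; the composite is $\pi_\frakp$. (Concretely, \Cref{lem:uupstoit} is what exhibits a $\GL_2(\Z_{E,\frakp})$-representative of the class of $\delta_s^2+1 = \lambda_{2s}\delta_s$, namely $\theta_s^{-1}(\delta_s^2+1)$, whose reduced norm is the unit $\upsilon_s$.) The right vertical arrow is the tautological inclusion $\PSL_2(\F_\frakp)\hookrightarrow\PGL_2(\F_\frakp)$ and the left vertical arrow is $\Delta^{(2)}\hookrightarrow\Delta$. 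Commutativity of \eqref{eqn:delta2pdwe0} is then inherited from that of \eqref{eqn:delta12}/\Cref{lem:descenttonormalizer}: in the reduced square every arrow is induced by the single ring map $\Lambda_\frakp \iso \M_2(\Z_{E,\frakp}) \to \M_2(\F_\frakp)$ together with the inclusion $\Lambda^1 \hookrightarrow N_{A^\times}(\Lambda)$, so all squares commute; I expect this to be routine diagram bookkeeping.

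It remains to show that $\pi_\frakp$ factors through $\varpi_\frakp$, i.e.\ that $\Gamma(\frakp) = \ker\varpi_\frakp \subseteq \ker\pi_\frakp$. Given $\delta \in \Delta$, let $\tilde\delta \in \calO^1$ be a lift and let $g \in \GL_2(\Z_{E,\frakp})$ be a representative of its normalizer image; then \Cref{lem:descenttonormalizer} (after base change to $E_\frakp$) gives $\tilde\delta = fg$ for some central $f \in (F\otimes_E E_\frakp)^\times = \prod_{\frakP\mid\frakp}F_\frakP^\times$, and comparing reduced norms shows $f^2 = \nrd(g)^{-1}$ is a unit at every $\frakP \mid \frakp$, hence so is $f$; therefore $\tilde\delta$ and $g$ reduce to the same class in $\PGL_2(\Z_F/\frakp\Z_F)$. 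Consequently the composite $\Delta \xrightarrow{\varpi_\frakp} \SL_2(\Z_F/\frakp\Z_F)/\{\pm1\} \hookrightarrow \PGL_2(\Z_F/\frakp\Z_F)$ agrees with $\Delta \xrightarrow{\pi_\frakp} \PGL_2(\F_\frakp) \hookrightarrow \PGL_2(\Z_F/\frakp\Z_F)$, the last inclusion being induced by the diagonal ring embedding $\F_\frakp \hookrightarrow \Z_F/\frakp\Z_F$ and being injective. Any $\delta \in \ker\varpi_\frakp$ has $\tilde\delta \equiv \pm1 \pmod{\frakp\calO}$ and hence trivial image in $\PGL_2(\Z_F/\frakp\Z_F)$, so $\pi_\frakp(\delta) = 1$; thus $\ker\varpi_\frakp \subseteq \ker\pi_\frakp$, as required.

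I expect the main obstacle to be this last step: faithfully reconciling the ``$E$-side'' reduction defining $\pi_\frakp$, which lives over the single residue field $\F_\frakp$, with the ``$F$-side'' reduction defining $\varpi_\frakp$, which a priori lives over the larger \'etale algebra $\Z_F/\frakp\Z_F = \prod_{\frakP\mid\frakp}\F_\frakP$. This is exactly where the unramifiedness of $\frakp$ in $F$ (i.e.\ $\frakp \nmid \frakd_{F|E}$) and the evenness of $\ord_\frakp(\lambda_s+2)$ furnished by \Cref{lem:uupstoit} are essential; the construction of the four maps and the commutativity check of the middle paragraph should be straightforward functoriality of reduction.
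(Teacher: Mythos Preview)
Your proposal is correct and follows the same approach as the paper: the paper's proof is the one-liner ``Combine \eqref{eqn:delta12withmats} with \Cref{lem:uupstoit},'' and you have simply unpacked what that combination means, invoking \Cref{lem:descenttonormalizer} (as the paper does in the paragraph preceding \eqref{eqn:delta12withmats}) to pass to the normalizer, then localizing and reducing. In particular, your explicit verification that $\ker\varpi_\frakp \subseteq \ker\pi_\frakp$ via comparison inside $\PGL_2(\Z_F/\frakp\Z_F)$ is a detail the paper leaves implicit; the key ingredients---$\frakp\nmid\discrd(\Lambda)$ giving the matrix identification, and $\frakp\nmid\frakd_{F|E}$ enabling \Cref{lem:uupstoit}---are exactly the ones the paper cites.
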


We let $G_\frakp \colonequals \pi_\frakp(\Delta) \leq \PGL_2(\Z_E/\frakp)$ be the image of $\pi_\frakp$.  

\begin{proof}
Combine \eqref{eqn:delta12withmats} with \Cref{lem:uupstoit}.
\end{proof}

\begin{remark}
A similar argument works when $\frakN$ is composite; however the right-hand vertical map $\SL_2(\Z_E/\frakN)/\{\pm 1\} \to \PGL_2(\Z_E/\frakN)$ may no longer be injective when $\frakN$ is composite.  This leads to certain ambiguities about the definition which we will return to in future work.
\end{remark}

\subsection*{Admissibility, projectivity, and image}

It can and does happen that two different triangular modular curves are isomorphic (as curves and as covers of $\PP^1$).  The issue is simply that in the homomorphism $\pi_\frakN$ from $\Delta(a,b,c)$ to a matrix group, the generators $\delta_s$ need not have order $s$ in the image (for $s=a,b,c$).  In other words, the reduction homomorphism factors through a triangle group with a smaller triple.  This happens for example when $s=\infty$, as the order of $\pi_\frakN(\delta_s)$ is always finite!  To illustrate this phenomena, we present the following example.

\begin{example} \label{exm:uhoh}
Consider the triples $(2,3,c)$ with $c=p^k$, where $k \geq 1$ and $p \geq 5$ is prime.  Then 
\begin{equation} E_k \colonequals E(2,3,c)=F(2,3,c)=\Q(\lambda_{2c})=\Q(\zeta_{2c})^+ 
\end{equation}
and $\beta(2,3,c) = \lambda_{c}-1 \in \Z_{E_k}^\times$.  The prime $p$ is totally ramified in $F$, so $\F_{\frakp_k} \simeq \F_p$ for $\frakp_k \mid p$.  Thus $X(2,3,p^k;\frakp_k) \simeq X(2,3,p;\frakp_1)$. 
\end{example}

To avoid this redundancy, we make the following definition.  

\begin{definition}\label{def:admissible}
Given a triple $(a,b,c)$, an ideal $\frakp \subseteq \Z_{E(a,b,c)}$ is \defi{admissible} for $(a,b,c)$ if 
\begin{itemize}
\item $\frakp \nmid \discrd(\Lambda)\frakd_{F|E}$, and
\item the order of $\pi_\frakp(\delta_s)$ is equal to $s$ for all $s=a,b,c$.
\end{itemize}
\end{definition}

\begin{theorem}[Clark--Voight] \label{the:ClarkVoight}
We have $\pi_\frakp(G_\frakp^{(2)})=\PSL_2(\Z_E/\frakp)$ and 
\[ \pi_\frakp(G_\frakp)=\PXL_2(\Z_E/\frakp) \]
where $\PXL_2$ denotes $\PSL_2$ or $\PGL_2$ according as $\frakp$ splits in $F \supseteq E$ or not.  
\end{theorem}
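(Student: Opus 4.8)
The plan is to reduce everything to the corresponding statement about two-generated subgroups of $\SL_2(\F_q)$, for which we can invoke the work of Macbeath and its refinement in Clark--Voight. First I would set $q \colonequals \#\F_\frakp = \#(\Z_E/\frakp)$ and work with explicit lifts. By \Cref{prop:sl2andpgl2} and the commutative diagram \eqref{eqn:delta2pdwe0}, the map $\pi_\frakp$ carries $\Delta$ into $\PGL_2(\Z_E/\frakp)$ and $\Delta^{(2)}$ into the image of $\SL_2(\Z_E/\frakp)/\{\pm 1\}$; write $\til{G}_\frakp \leq \SL_2(\F_q)$ for the preimage under $\SL_2(\F_q) \to \PGL_2(\F_q)$ of $G_\frakp^{(2)} \colonequals \pi_\frakp(\Delta^{(2)})$. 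By \Cref{lem:Delta2gens}, $\Delta^{(2)}$ is generated by the listed commutators and conjugates of squares, and since $\delta_s^2 \in \Delta^{(2)}$ with $\delta_s$ of order $s$ (using admissibility), one checks that $\Delta^{(2)}$ is in fact topologically $2$-generated — indeed, two of the three rotations $\delta_a^2,\delta_b^2,\delta_c^2$ already generate $\Delta^{(2)}$ modulo the abelianization computation \eqref{eqn:take124} — so $G_\frakp^{(2)}$ is a two-generated subgroup of $\PSL_2(\F_q)$.

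Next I would pin down the trace triple of these two generators. Under the embedding \eqref{eqn:deltaembed}, the element $\delta_s$ maps to $\lambda_{2s}\delta_s$ (or the corrected expression when $s=2$), whose reduced norm is $\lambda_{2s}^2 = \lambda_s+2$ and whose reduced trace is $\lambda_{2s}^2 = \lambda_s + 2$ as well up to the normalization; more precisely, the square $\delta_s^2$, viewed in $\Lambda^1$, has trace $\lambda_s = \zeta_s + \zeta_s^{-1}$ reduced mod $\frakp$. So the pair generating $G_\frakp^{(2)}$ has a trace triple built from $\lambda_a,\lambda_b,\lambda_c$ and the ``commutator trace'' governed by $\beta(a,b,c)$ of \eqref{eqn:beta} reduced modulo $\frakp$. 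The hypothesis $\frakp \nmid \discrd(\Lambda)$ guarantees $\beta \not\equiv 0$, so the corresponding two-generator subgroup is irreducible and not contained in a Borel; admissibility ($s^\sharp = s$) rules out the degenerate cases where a generator becomes parabolic or has too-small order. Now Macbeath's classification (as packaged in Clark--Voight, e.g.\ their main theorem on which \Cref{the:ClarkVoight} is modelled) says that an irreducible two-generated subgroup of $\SL_2(\F_q)$ with a generator of order $\geq 4$ and the relevant trace conditions must be all of $\SL_2(\F_q)$ — it cannot be dihedral, $A_4$, $S_4$, $A_5$, or a Borel/metacyclic group, because the orders $a,b,c$ are too large (recall $\chi(a,b,c) \leq -1/42$, and one rules out the small exceptional triples $(2,3,7),(2,3,8),\dots$ case by case, or observes they force specific small $q$ which are handled directly). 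This gives $\pi_\frakp(G_\frakp^{(2)}) = \PSL_2(\F_q)$.

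For the second assertion, I would use that $\Delta/\Delta^{(2)}$ is a $2$-group of order dividing $4$ by \eqref{eqn:take124}, so $G_\frakp / G_\frakp^{(2)}$ is a $2$-group, whence $G_\frakp$ lies between $\PSL_2(\F_q)$ and its normalizer in $\PGL_2(\F_q)$, which (for $q$ odd, $q>3$) is exactly $\PGL_2(\F_q)$ with $[\PGL_2:\PSL_2]=2$. Therefore $G_\frakp$ equals $\PSL_2(\F_q)$ or $\PGL_2(\F_q)$, and it equals $\PGL_2(\F_q)$ precisely when some generator $\pi_\frakp(\delta_s)$ fails to lie in $\PSL_2$, i.e.\ when $\det$ of its lift $\lambda_{2s}\delta_s$, namely $\lambda_s+2$, is a nonsquare in $\F_q$ for some $s$. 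By \Cref{lem:uupstoit} this nonsquare condition for some $s$ is exactly the statement that $\frakp$ does \emph{not} split completely in $F \supseteq E$; when $\frakp$ splits completely all the $\upsilon_s$ (hence $\lambda_s+2$) are squares and $G_\frakp \subseteq \PSL_2(\F_q)$, forcing equality. The one case needing separate care is the small even-characteristic or small-$q$ situation where $\PSL_2 = \PGL_2$ and the splitting dichotomy degenerates, but there $\PXL_2$ is unambiguous and the statement holds vacuously; likewise $q \leq 3$ is excluded since then no admissible hyperbolic triple exists.

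I expect the main obstacle to be the case $a = 2$ (flagged in \Cref{rem:aEquals2} and in the parenthetical of \Cref{lem:descenttonormalizer}): when $a=2$ the uniform formula $\delta_s \mapsto \lambda_{2s}\delta_s$ breaks because $\lambda_4 = 0$, so the generator of order $2$ must be replaced by the product expression $(\delta_c^2+1)(\delta_b^2+1) = \lambda_{2b}\lambda_{2c}\delta_a$, and tracking its determinant — hence whether it lands in $\PSL_2$ — requires the combined quantity $\lambda_{2b}\lambda_{2c}$ rather than a single $\lambda_{2s}$. One must check that the Kronecker-symbol bookkeeping of \Cref{lem:uupstoit} still produces the clean ``splits completely in $F$'' criterion in this case; this is where I would spend the most effort, likely by using $\delta_a\delta_b\delta_c = 1$ to express the determinant of the $a=2$ generator in terms of the other two and invoking the relation $\lambda_{2s}^2 = \lambda_s+2$ to match it against the $\upsilon_s$ of the lemma.
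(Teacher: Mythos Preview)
The paper's own proof is a two-line citation: it invokes Clark--Voight \cite[Theorem~A]{ClarkVoight} for $\frakp\nmid 2abc$ and observes that the same argument goes through under the weaker hypothesis $\frakp\nmid\discrd(\Lambda)\frakd_{F|E}$. So there is no detailed in-paper argument to compare against; what you have written is an attempted reconstruction of the \emph{underlying} Clark--Voight proof.

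Your second half --- deducing the $\PSL_2$/$\PGL_2$ dichotomy from the square class of $\lambda_s+2$ via \Cref{lem:uupstoit}, including the $a=2$ caveat --- is essentially how the argument actually goes, and is fine.

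The first half, however, has a real gap. You assert that $\Delta^{(2)}$ is (topologically) $2$-generated, so that Macbeath's trace-triple classification can be applied directly to $G_\frakp^{(2)}$. This is false in general. When exactly one of $a,b,c$ is odd, $\Delta^{(2)}$ has index $2$ in $\Delta$ and signature $(0;a,a,b/2,c/2)$, hence needs three generators; when all of $a,b,c$ are even, $\Delta^{(2)}$ has index $4$ and signature $(0;a/2,a/2,b/2,b/2,c/2,c/2)$, needing five. The elements $\delta_a^2,\delta_b^2,\delta_c^2$ do not generate $\Delta^{(2)}$ in these cases (compare the full generating set in \Cref{lem:Delta2gens}). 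So Macbeath cannot be invoked on $\Delta^{(2)}$ as you propose.

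The route Clark--Voight actually take applies Macbeath to $\Delta$ itself, which \emph{is} $2$-generated by $\delta_a,\delta_b$ (with $\delta_c=(\delta_a\delta_b)^{-1}$): one lifts to $\SL_2$ over the residue field of $F$, reads off the trace triple $(\lambda_{2a},\lambda_{2b},\lambda_{2c})$, and uses the classification to get surjectivity onto $\PSL_2$ there. The statement for $\Delta^{(2)}$ then follows formally: since $\pi_\frakp$ is a homomorphism, $\pi_\frakp(\Delta^{(2)})$ equals the subgroup of $G_\frakp$ generated by squares, and once $G_\frakp\supseteq\PSL_2(\F_q)$ one uses that $\PSL_2(\F_q)$ is perfect (for $q>3$) to conclude $\pi_\frakp(\Delta^{(2)})=\PSL_2(\F_q)$. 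Your determinant/Kronecker-symbol argument then finishes as you wrote.
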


\begin{proof}
We refer to Clark--Voight \cite[Theorem A]{ClarkVoight} for the case where $\frakp \nmid 2abc$; but examining the argument given \cite[Remark 5.24, proof of Theorem 9.1]{ClarkVoight} in light of the above, we see that it extends when $\frakp \nmid \discrd(\Lambda)\beta\frakd_{F|E}$.  
\end{proof}

\subsection*{Hyperbolic triples reducing to non-hyperbolic triples}

In considering admissible triples, we may lose the hypothesis that $(a,b,c)$ is hyperbolic; however, this situation is easy to characterize.  We note that in most cases, these groups do not contain $\PSL_2(\F_q)$, so they are not considered in this paper.

\begin{proposition}\label{prop:otherCasesX0}
Let $(a,b,c)\in(\Z_{\ge0}\cup\{\infty\})^3$ be a triple and let $\frakp\subset\Z_E$ be a nonzero prime ideal.  Suppose further that $(a^\sharp,b^\sharp,c^\sharp)$ is a projective triple, but not hyperbolic.  Then $(a,b,c;p,q)$ is one of the elements listed in the following table.  In the table, $p$ lies below $\frakp$ and $q$ is the residue field degree of $\frakp$.
\begin{equation}
\bgroup
\def\arraystretch{1.15}
\begin{array}{c|c||c|c|c||c}\label{eqn:tableExtras}
    \bm{(a,b,c)} & \text{\textbf{conditions}} &\;\;\bm{p} \;\;&\;\;\bm{q} \;\;&\bm{{\rm PXL}}&\bm{E(a^\sharp,b^\sharp,c^\sharp)}\\\hline\hline
    \begin{array}{c}
    (2^{k_a},2^{k_b},3\cdot 2^{k_c}),\\
    (3\cdot 2^{k_c},\infty,\infty),\\
    (2^{k_a},3\cdot 2^{k_c},\infty)
     \end{array}&1\le k_a< k_b&2&2&1&\href{https://www.lmfdb.org/NumberField/1.1.1.1}{\Q}
     \\\hline
    \begin{array}{c}
    (3^{k_a},3^{k_b},3^{k_c}),\\
    (3^{k_a},\infty,\infty),\\
    (3^{k_a},3^{k_b},\infty),\\
    (\infty,\infty,\infty)
     \end{array}
     & 1\le k_a\le k_b<k_c &3&3 & 1 & \href{https://www.lmfdb.org/NumberField/1.1.1.1}{\Q}\\\hline
    \begin{array}{c}
    (2\cdot 3^{k_a},3^{k_b},3^{k_c}),\\
    (2\cdot 3^{k_a} ,3^{k_b},\infty),\\
    (2\cdot 3^{k_a},\infty,\infty)
    \end{array}
    &1\le k_b\le k_c,\,k_ak_bk_c\ne 1 &3&3 &1& \href{https://www.lmfdb.org/NumberField/1.1.1.1}{\Q}\\\hline
    \begin{array}{c}
    (2\cdot 3^{k_a},3^{k_b},4\cdot 3^{k_c}),\\
    (2\cdot 3^{k_a},4\cdot 3^{k_b},\infty)
    \end{array}
    &1\le k_b,\,k_ak_bk_c\ne 1&3&3&-1&\href{https://www.lmfdb.org/NumberField/1.1.1.1}{\Q}\\\hline
    \begin{array}{c}
    (2^{k_a},3\cdot 2^{k_b},5 \cdot 2^{k_c}),\\(3\cdot 2^{k_b},5 \cdot 2^{k_c},\infty)\end{array}
    &1\le k_a,\,k_ak_bk_c\ne 1&2&4&1&\href{https://www.lmfdb.org/NumberField/2.2.5.1}{\Q(\sqrt{5})}\\\hline
    \begin{array}{c}
    (2\cdot 5^{k_a},3\cdot 5^{k_b},5^{k_c}),\\(2\cdot 5^{k_a},3\cdot 5^{k_b},\infty)\end{array}&1\le k_c,\,k_ak_bk_c\ne 1&5&5&1&\href{https://www.lmfdb.org/NumberField/2.2.5.1}{\Q(\sqrt{5})}
\end{array}
\egroup
\end{equation}
Furthermore, the curves $X(a,b,c;\frakp)$ with $(a,b,c;\frakp)$ as above all have genus 0.
\end{proposition}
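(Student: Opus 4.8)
The plan is to prove the proposition in two stages: first establish the classification recorded in the table, then compute the genera. Throughout, note that since $\Gamma(\frakp) \trianglelefteq \Delta(a,b,c)$, the Belyi map $\varphi_\frakp\colon X(a,b,c;\frakp) \to X(a,b,c)\simeq \PP^1$ is Galois with group $G_\frakp = \Delta/\Gamma(\frakp) \simeq \varpi_\frakp(\Delta)$, ramified only over $0,1,\infty$ with respective ramification indices $a^\sharp,b^\sharp,c^\sharp$; so the Galois Riemann--Hurwitz computation behind \eqref{eqn:genusX} gives
\[
g\bigl(X(a,b,c;\frakp)\bigr) = 1 - \frac{\#G_\frakp}{2}\,\chi(a^\sharp,b^\sharp,c^\sharp).
\]

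For the classification, I would first pin down the function $s \mapsto s^\sharp$. By \Cref{lem:descenttonormalizer}, the normalizer embedding sends $\delta_s$ to $\lambda_{2s}\delta_s = \delta_s^2+1 \in \Lambda$, which has reduced norm and reduced trace both equal to $\lambda_s+2 = \lambda_{2s}^2$. Hence when $\frakp$ is unramified in $A$ (so that $(\Lambda/\frakp\Lambda)^1/\{\pm 1\} \simeq \SL_2(\F_\frakp)/\{\pm 1\}$), the element $\varpi_\frakp(\delta_s)$ is projectively the semisimple class with eigenvalue ratio the reduction of $\zeta_{2s}^{\pm 2} = \zeta_s^{\pm 1}$, whose order is the prime-to-$p$ part of $s$ --- degenerating to $p$ (a unipotent class) when $s$ is a power of $p$ or $s=\infty$, and to $1$ in the exceptional cases where $\delta_s\equiv\pm 1\pmod{\frakp\Lambda}$, detectable from the $\frakp$-valuation of $\nrd(\delta_s\mp 1) = 2\mp\lambda_{2s}$. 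When $\frakp\mid\beta$ ($\frakp$ ramified in $A$), $\Lambda/\frakp\Lambda$ is a non-split $\F_\frakp$-algebra and one reads off $s^\sharp$ from the same valuations inside the local quaternion order. Imposing that $(a^\sharp,b^\sharp,c^\sharp)$ be non-hyperbolic then forces the (prime-to-$p$, suitably modified) parts of $a,b,c$ into one of the finitely many non-hyperbolic triples --- the spherical triples $(2,2,n)$, $(2,3,3)$, $(2,3,4)$, $(2,3,5)$, the Euclidean triples $(2,3,6)$, $(2,4,4)$, $(3,3,3)$, or a degenerate triple with an entry $1$. Running through these while simultaneously tracking which $p$ and which residue degrees are compatible with the standing hypothesis $\frakp\nmid\frakd_{F|E}$ (this is exactly what produces the shape conditions such as $1\le k_a<k_b$) yields the list of families. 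For each family, $p$ and $q$ come from the factorization of $p$ in $\Z_E$ (read off from the explicit descriptions of $E\subseteq F$), the $\PXL$-type from the splitting of $\frakp$ in $F$ via the Kronecker-symbol criterion of \Cref{lem:uupstoit}, and $E(a^\sharp,b^\sharp,c^\sharp)$ by a direct computation with the reduced triple.

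For the genus, I would then argue family by family using the displayed formula. If some $s^\sharp = 1$, then (because $\delta_a\delta_b\delta_c = 1$ forces the remaining two $\sharp$'s to coincide, so $G_\frakp$ is cyclic and the cover is branched over at most two points) the curve is $\simeq \PP^1$; concretely, for $G_\frakp \simeq \Z/d\Z$ one gets $g = 1 - \tfrac{d}{2}(1/d+1/d) = 0$. If every $s^\sharp\ge 2$ the reduced triple is spherical --- the Euclidean possibilities are excluded, being either ruled out by the definition of a projective triple or shown in the classification step to collapse further to a degenerate triple --- and then by the extension of \Cref{the:ClarkVoight} we have $G_\frakp\simeq\PXL_2(\F_\frakp)$; checking in each case that $\PXL_2(\F_q)$ is the full spherical triangle group ($(2,2,3)\leftrightarrow\PSL_2(\F_2)\cong S_3$, $(2,3,3)\leftrightarrow\PSL_2(\F_3)\cong A_4$, $(2,3,4)\leftrightarrow\PGL_2(\F_3)\cong S_4$, $(2,3,5)\leftrightarrow\PSL_2(\F_4)\cong\PSL_2(\F_5)\cong A_5$) gives $\#G_\frakp = 2/\chi(a^\sharp,b^\sharp,c^\sharp)$ and hence $g = 0$.

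The hard part will be the classification casework: obtaining a clean uniform description of $s^\sharp$, handling the exceptional collapses $\delta_s\equiv\pm 1$, and above all determining exactly which primes are excluded by $\frakp\mid\frakd_{F|E}$ or $\frakp\mid\beta = \discrd(\Oo)$ and how this interacts with the shape of $(a,b,c)$. As usual the case $p = 2$ is the most delicate (compare \Cref{rem:aEquals2}), and in the cases where $\frakp\mid\beta$ one must abandon the clean $\PGL_2$ picture and compute inside the maximal order of the ramified local quaternion algebra $A_\frakp$ --- this is where essentially all of the work lies.
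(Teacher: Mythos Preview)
Your plan and the paper's proof take genuinely different routes to the classification.  The paper fixes the reduced triple $(a^\sharp,b^\sharp,c^\sharp)$ first and then uses pure group theory to constrain $q$: for $(2,2,n)$ the image is dihedral, so only $\PSL_2(\F_2)\cong S_3$ survives; for the Euclidean triples $(2,3,6),(2,4,4),(3,3,3)$ the image must be solvable, so only $\PSL_2(\F_3)$ or $\PGL_2(\F_3)$ is possible; and for the spherical triples $(2,3,3),(2,3,4),(2,3,5)$ the paper invokes Clark--Voight's classification of exceptional projective triples \cite[Remark~8.4]{ClarkVoight} to get the short list $\PSL_2(\F_3),\PGL_2(\F_3),\PGL_2(\F_4),\PSL_2(\F_5)$.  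Only after this does the paper work backwards to recover the families of $(a,b,c)$.  Your approach instead computes $s^\sharp$ forward from the arithmetic of $\lambda_{2s}\bmod\frakp$, which is more hands-on and avoids citing the exceptional-triple classification, but buys you the delicate scalar-versus-unipotent casework that the paper sidesteps entirely.

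Two concrete gaps in your plan.  First, your detection criterion ``$\ord_\frakp(2\mp\lambda_{2s})$'' only tells you when the characteristic polynomial of $\delta_s$ has a repeated root $\pm 1$; it does not by itself distinguish the scalar case $\delta_s\equiv\pm I$ from a genuine unipotent, since both give $\nrd(\delta_s\mp 1)\equiv 0$.  Distinguishing them requires looking at the actual matrix entries in the explicit quaternionic model, not just the trace.  Second, your assertion that the Euclidean reduced triples are ``excluded, being either ruled out by the definition of a projective triple or shown\ldots to collapse further'' is not justified and is in tension with the paper itself: the paper's proof keeps $(a^\sharp,b^\sharp,c^\sharp)=(3,3,3)$ with $G_\frakp\simeq\PSL_2(\F_3)\cong A_4$ as a genuine projective case (row~2 of the table).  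For that case your displayed formula gives $g=1-\tfrac{12}{2}\cdot\chi(3,3,3)=1$, not $0$, so your spherical-only argument for $g=0$ cannot cover it as written; you would need either to exhibit the promised collapse explicitly for each such family, or to treat the Euclidean rows by a separate argument.
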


\begin{proof}
We first focus on the prime ideal case and make a case by case study.  The only triples $(a,b,c)\in(\Z_{\ge0}\cup\{\infty\})^3$ that are not hyperbolic are $$(2,2,n)\text{ for }n > 1,\,(2,3,3),\, (2,3,4),\, (2,3,5),\, (2,3,6),\,(2,4,4),\text{ or }(3,3,3).$$ 

Assume first that $(a^\sharp,b^\sharp,c^\sharp)=(2,2,c)$ for $c>1$. The image of $\pi_\frakp:\Delta(2,2,c)\to \PGL_2(\F_q)$ must be dihedral.  The only dihedral group that is isomorphic to $\PXL_2(\F_q)$ for any $q$ is $D_6\simeq\PSL_2(\F_2)$. Thus, we only have the triple $(a^\sharp,b^\sharp,c^\sharp)=(2,2,3)$ and prime $\frakp_2$.

The group $\Delta(2,3,6)$ is solvable since it fits in the exact sequence:
\begin{equation*}
    1\to \Z^2\to \Delta(2,3,6)\to \Z/6\Z\to 1.
\end{equation*}
The only solvable groups of the form $\PXL_2(\F_q)$ are $S_4 \simeq \PGL_2(\F_3)$ and $A_4 \simeq \PSL_2(\F_3)$. The triple $(2,3,6)$ is not admissible for $q=2$ or $q=3$, so $(2,3,6)$ is not projective and admissible for any prime ideal $\frakp$. With the same analysis, we can rule out $(2,4,4)$. We also have that the group $\Delta(3,3,3)$ is solvable. Hence, the image of $\pi_\frakp:\Delta(3,3,3)\to\PXL_2(\F_q)$ must be solvable.  The only solvable groups of this form are $A_4 \simeq \PSL_2(\F_3)$ and $S_4 \simeq \PGL_2(\F_3)$.  Thus, the only option is that $\frakp$ is a prime above $3$ with residue field $\F_3$.

The last triples to consider are $(2,3,3),\,(2,3,4)$ and $(2,3,5)$. These triples are all \emph{exceptional}. Tthe only projective linear groups that can arise from exceptional triples \cite[Remark 8.4]{ClarkVoight} are the following:
$$\PSL_2(\F_3), \PGL_2(\F_3),\PGL_2(\F_4), \PSL_2(\F_5).$$
We now use this fact to finish the analysis. When $(a^\sharp,b^\sharp,c^\sharp)=(2,3,3)$, the admissible prime ideals $\frakp$ have residue field degree $3,4$ and $5$. The field $E(2,3,3)$ is the rational field, so $\Z_E/\frakp_2\simeq\F_2$.
In addition, the ideal $2\Z_E$ is totally ramified in any field $E(2\cdot 2^{k_a},3\cdot 2^{k_b},3\cdot 2^{k_c})$, so $q\ne 4$. The only options then are $q=3$ and $q=5$. A quick \textsf{Magma} \cite{Magma} calculation shows that elements with these orders cannot generate $\PSL_2(\F_5)$.

Similarly, when $(a^\sharp,b^\sharp,c^\sharp)=(2,3,4)$, the only possibilities for $q$ which make the triple projective and admissible for $\frakp$ are $q=3$ or $q=5$. However, the field $E(2,3,4)$ is the rational field and 5 is inert in $F$, so we would have $G_{5\Z_E}\simeq \PGL_2(\F_5)$, which is not on the list of possible groups. The same happens for $(a^\sharp,b^\sharp,c^\sharp)=(2,3,5)$; the options of $q$ for an admissible prime $\frakp$ are $q=2,3,4,5$. The ideal $2\Z_E$ is inert in $E(2,3,5)$, an extension of $\Q$ of degree 2, thus $q=2$ is not possible. The ideal $3\Z_E$ is also inert in $E(2,3,5)$, so an isomorphism with $\PXL_2(\F_3)$ is not possible. The only options for $q$ are $q=4$ and $q=5$.

For all of the possible triples $(a^\sharp,b^\sharp,c^\sharp)$ and primes $\frakp$ described above, we certify that such map is possible by exhibiting passports for each curve.  Finally, we use \Cref{eqn:genusX} to compute the genus of each of these curves, finding that they all have genus 0.
\end{proof}

\subsection*{Borel-type subgroups} 

As in the introduction, let 
\begin{equation} \label{eqn:abd}
\left\{\begin{pmatrix} a & b \\ 0 & d \end{pmatrix} : a,b,d \in \Z_E/\frakp \text{ and } ad \in (\Z_E/\frakp)^\times \right\} \leq \GL_2(\Z_E/\frakp) 
\end{equation}
be the upper-triangular matrices in $\GL_2(\Z_E/\frakp)$, and let $H_{0,\frakp}$ be its image in the projection to $\PGL_2(\Z_E/\frakp)$.  Similarly, let
\begin{equation} \label{eqn:1b1}
\left\{\begin{pmatrix} 1 & b \\ 0 & 1 \end{pmatrix} : b \in \Z_E/\frakp \right\} \leq  \GL_2(\Z_E/\frakp) 
\end{equation}
be the upper unipotent subgroup and $H_{1,\frakp}$ again its image in $\PGL_2(\Z_E/\frakp)$.  

We then define the subgroups
\begin{equation} \label{eqn:gammagammgama}
\begin{aligned}
    \Gamma_0(a,b,c;\frakp) &\colonequals\varphi^{-1}_\frakp(H_{0,\frakp}), \\ \Gamma_1(a,b,c;\frakp) &\colonequals\varphi^{-1}_\frakp(H_{1,\frakp}).
\end{aligned}
\end{equation}
and the corresponding quotients
\begin{equation}
\begin{aligned}\label{eqn:X0X1def}
    X_0(a,b,c;\frakp) &\colonequals \Gamma_0(a,b,c;\frakp)\backslash \calH = H_{0,\frakp} \backslash X'(a,b,c;\frakp) \\
    X_1(a,b,c;\frakp) &\colonequals \Gamma_1(a,b,c;\frakp)\backslash \calH = H_{1,\frakp} \backslash X(a,b,c;\frakp).
\end{aligned}
\end{equation}
Then we have natural quotient maps
\begin{equation}\label{eqn:covers}
    X(a,b,c;\frakN) \to X_1(a,b,c;\frakN)\to X_0(a,b,c;\frakN)\to X(a,b,c;1) \simeq \PP^1.
\end{equation}

\section{Triangular modular curves \texorpdfstring{$X_0(a,b,c;\frakp)$}{X0(a,b,c;p)} of prime level}\label{sec:X0}

In this section, we exhibit a formula for the genus of the triangular modular curves $X_0(a,b,c;\frakp)$ for $\frakp$ prime. Using this formula we show that there are only finitely many such curves with bounded genus. 
\subsection*{Setup}

Let $(a,b,c)$ be a hyperbolic triple and $\frakp$ be an admissible prime of $E=E(a,b,c)$ with residue field $\F_\frakp$. Let $q\colonequals\#\F_\frakp$, so $\F_\frakp \simeq \F_q$.  Because $E$ is Galois over $\Q$, all primes $\frakp$ have the same ramification and splitting type; it follows that the genus of $X_0(a,b,c;\frakp)$ only depends on the prime number $p \in \Z$ below $\frakp$ (and the inertial degree of $\frakp$ over $p$).

Let $G\colonequals G_\frakp$ be as in \Cref{the:ClarkVoight}. Then the group $H_0=H_{0,\frakp}$ consists of the image in $G$ of the upper-triangular matrices of $\SL_2(\F_q)$ or $\GL_2(\F_q)$, depending on $G$.  By construction, the curves $X_0(a,b,c;\frakp)$ and $X(a,b,c;\frakp)$ fit in the following diagram.

\begin{equation*}
\begin{aligned}
\xymatrix{
X(a,b,c;\frakp) \ar@{->}[dr]^{H_0} \ar@{->}[dd]_{G}\\
&X_0(a,b,c;\frakp) \ar@{->}[dl] \\
\P^1 &
} 
\end{aligned}
\end{equation*}

We first compute the index $[G:H_0]$, which corresponds to the degree of the cover $X_0(a,b,c;\frakp)\to \P^1$.  If $G=\PGL_2(\F_q)$, up to multiplication by a scalar matrix, it is possible to choose representatives of elements of $H_0$ that have 1 on the first entry of the matrix. Thus, $\# H_0=q(q-1)$ and $[G:H_0]=q+1.$
When $q$ is even, we have an isomorphism $\PSL_2(\F_q) \simeq \PGL_2(\F_q)$, so the index $[G:H_0]$ is the same as above.  Finally, if $G=\PSL_2(\F_q)$ with $q$ odd, then representatives can be chosen to have 1 on the first entry of the matrix as above. Also, the upper triangular matrices are defined up to multiplication by $-1$. Hence $\#H_0=\frac12q(q-1)$ and $[G:H_0]=q+1.$

Via the projection of the first column of the matrix to $\P^1(\F_q)$, the set of cosets $G/H_0$ is naturally in bijection with $\P^1(\F_q)$.  With this bijection, the action of $\pi_\frakp(\Delta)$ on $G/H_0$ becomes simply matrix multiplication.  The ramification of the cover $X_0(a,b,c;p)\to \P^1$ then depends on the cycle decomposition of the corresponding elements (in $G$) as an element of $\Sym(\PP^1) \simeq S_{q+1}$. 

\subsection*{Cycle structure and genus formula}

The following lemma describes the cycle structure using only the order of the elements.  Recall we write $\PXL_2$ for either $\PSL_2$ or $\PGL_2$.

\begin{lemma}\label{lem:cyclesDescription}
Let $G=\PXL_2(\F_q)$ with $q=p^r$ for a prime number $p$.  Let $\overline{\sigma}_s \in G$ have order $s\ge2$, and if $s=2$ suppose $p=2$.  Then the action of $\overline{\sigma}_s$ on $\P^1(\F_q)$ has:
\begin{enumroman}
    \item two fixed points and $(q-1)/s$ orbits of length $s$ if $s\mid (q-1)$; \label{case:s|q-1}
    \item one fixed point and $q/p$ orbits of length $p$ if $s=p$ (this is the case when $s\mid q$); and \label{case:s|p}
    \item (no fixed points and) $(q+1)/s$ orbits of length $s$ if $s \mid (q+1)$.\label{case:s|q+1}
\end{enumroman}
\end{lemma}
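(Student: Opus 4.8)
The plan is to analyze the action of $\overline{\sigma}_s$ on $\PP^1(\F_q)$ by lifting to a matrix and putting it into a convenient normal form over $\overline{\F_q}$, then reading off cycle lengths from eigenvalue data. First I would lift $\overline{\sigma}_s$ to a matrix $\sigma \in \GL_2(\F_q)$ (or $\SL_2$, as appropriate), noting that the action on $\PP^1(\F_q)$ depends only on the image in $\PGL_2$. The key invariant is the splitting behavior of the characteristic polynomial of $\sigma$ over $\F_q$. There are three cases: (a) $\sigma$ has two distinct eigenvalues in $\F_q$ — it is diagonalizable over $\F_q$, with exactly two fixed points on $\PP^1(\F_q)$, namely the two eigenlines; (b) $\sigma$ has a repeated eigenvalue — after scaling it is unipotent, conjugate to $\left(\begin{smallmatrix} 1 & 1 \\ 0 & 1\end{smallmatrix}\right)$, fixing exactly one point; (c) $\sigma$ has eigenvalues in $\F_{q^2} \smallsetminus \F_q$, conjugate over $\F_q$ to a "nonsplit torus" element, with no fixed points on $\PP^1(\F_q)$.

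Next I would pin down which case occurs in terms of $s = \ord(\overline{\sigma}_s)$. In case (b), the unipotent has order $p = \mathrm{char}(\F_q)$, so this is exactly $s = p$; the action on the remaining $q$ points of $\PP^1(\F_q)$ is free (a nontrivial unipotent translation $x \mapsto x + t$ on the affine line has no finite fixed points), giving $q/p$ orbits of length $p$ — that is case~\ref{case:s|p}. In case (a), with distinct eigenvalues $\alpha, \beta \in \F_q^\times$, the element acts on $\PP^1 \smallsetminus \{0,\infty\} \cong \F_q^\times$ as multiplication by the ratio $\mu := \alpha/\beta$, and in $\PGL_2$ the order of $\overline{\sigma}_s$ equals the multiplicative order of $\mu$ in $\F_q^\times$; so $s \mid q-1$, the action on $\F_q^\times$ is free, and we get two fixed points plus $(q-1)/s$ orbits of length $s$ — case~\ref{case:s|q-1}. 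In case (c), identifying $\PP^1(\F_q)$ with the cosets of the nonsplit torus, $\overline{\sigma}_s$ acts by translation in a cyclic group of order $q+1$ (the norm-one subgroup of $\F_{q^2}^\times$, or rather the image of the nonsplit torus in $\PGL_2$), so $s \mid q+1$ and we get no fixed points and $(q+1)/s$ orbits of length $s$ — case~\ref{case:s|q+1}. The hypothesis that $p = 2$ when $s = 2$ rules out the degenerate possibility $\overline{\sigma}_s = \left(\begin{smallmatrix} \pm 1 & 0 \\ 0 & \mp 1 \end{smallmatrix}\right)$ in $\PGL_2$ for odd $p$, which is an involution with two fixed points that would straddle cases; when $p = 2$ every involution is unipotent, so the trichotomy stays clean.

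The main obstacle is case (c): making precise that the action of a nonsplit-torus element on $\PP^1(\F_q)$ is genuinely a free translation action of a cyclic group of order dividing $q+1$, and that the order of $\overline{\sigma}_s$ in $\PGL_2(\F_q)$ matches the translation length. Concretely, one identifies $\PP^1(\F_q)$ with $\PP^1_{\F_{q^2}}(\F_{q^2})^{\mathrm{Frob}=\text{conj}}$-type data, or more elementarily writes $\F_{q^2} = \F_q(\theta)$ and realizes the nonsplit torus as $\F_{q^2}^\times$ acting on $\F_{q^2}$ viewed as a $2$-dimensional $\F_q$-space; then $\PP^1(\F_q) = \F_q$-lines in $\F_{q^2}$, which $\F_{q^2}^\times$ permutes with the center $\F_q^\times$ acting trivially, so the effective action is of $\F_{q^2}^\times / \F_q^\times$, a cyclic group of order $q+1$, and it is transitive hence free on the $q+1$ lines. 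The order of $\overline{\sigma}_s$ is then the order of its image in this quotient, which divides $q+1$. For case (a) one must similarly check the Galois/linear-algebra bookkeeping — that the non-$\F_q$-rational eigenvalue scenario is genuinely exhausted by cases (b) and (c) and not, say, by an element with eigenvalues forming a single Galois orbit but equal ratio in $\F_q$. Once these identifications are in place, the counts in \ref{case:s|q-1}–\ref{case:s|q+1} are immediate, and the three divisibility conditions $s \mid q-1$, $s = p$, $s \mid q+1$ are mutually exclusive and exhaustive for $s \geq 2$ (with the stated caveat at $s = 2$), so the lemma follows.
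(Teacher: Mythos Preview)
Your proposal is correct and follows essentially the same approach as the paper: both lift $\overline{\sigma}_s$ to a matrix, split into the three cases (split semisimple, unipotent, non-split semisimple) according to the factorization of the characteristic polynomial, and read off the orbit structure from the resulting normal form. The only notable difference is in the non-split case: the paper simply conjugates over $\F_{q^2}$ to a diagonal matrix and invokes the split analysis to see that every orbit has length $s$, whereas you instead realize the nonsplit torus as $\F_{q^2}^\times/\F_q^\times$ acting on the $\F_q$-lines in $\F_{q^2}$; both arguments are standard and yield the same conclusion. Your remark about the $s=2$ hypothesis is slightly imprecisely phrased---the issue is not one particular matrix but that for odd $q$ an involution may be either split or non-split (both $2\mid q-1$ and $2\mid q+1$), so the trichotomy ceases to be exclusive---but the substance is right and matches the paper's treatment (which defers that ambiguity to the subsequent lemma).
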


\begin{proof}
We note that each class in $G$ is represented by matrices that are diagonalizable over $\F_q$, diagonalizable only over $\F_{q^2}$, or not diagonalizable.  We prove the Lemma by studying in detail each case. Let $\sigma_s$ be an element of $\GL_2(\F_q)$ whose projection to $G$ is $\overline{\sigma}_s$. If $\sigma_s$ is diagonalizable, then we say that $\overline{\sigma}_s$ is \defi{split semisimple}, and $\sigma_s$ is conjugate to say the diagonal matrix $\begin{pmatrix} u & 0 \\ 0 & v\end{pmatrix}$.  We must have $u\neq v$ because otherwise $\overline{\sigma}_s$ would be the identity in $G$, contradicting that $s\ge2$. The order of $\overline{\sigma}_s$ is $s$, so $s$ is the order of $uv^{-1}$ in $\F_q^\times$. To find the orbits of the action of $\overline{\sigma}_s$ on $\P^1(\F_q)$, we use that
$$\begin{pmatrix}
u&0\\0&v
\end{pmatrix}\Pone{1}{0}=\Pone{1}{0},\hspace{0.7cm}\begin{pmatrix}
u&0\\0&v
\end{pmatrix}\Pone{x}{1}=\Pone{uv^{-1}x}{1},$$
for any $x\in\F_q$. Hence, the action of $\overline{\sigma}_s$ has two fixed points: $\Pone{1}{0}$ and $\Pone{0}{1}$, and $(q-1)/s$ orbits with $s$ elements.

The element $\overline{\sigma}_s$ is \defi{unipotent} if and only if it is conjugate to $\begin{pmatrix} 1&u\\0&1\end{pmatrix}$ in $G$ for some $u\in \F_q^\times$. This is the case when the characteristic polynomial of $\sigma_s$ has two equal roots and $\sigma_s$ is not diagonalizable over $\F_q^2$. This happens if and only if $s=p$. In this case, we have
$$\begin{pmatrix} 1&u\\0&1\end{pmatrix}\Pone{1}{0}=\Pone{1}{0},\hspace{0.7cm}\begin{pmatrix} 1&u\\0&1\end{pmatrix}\Pone{x}{1}=\Pone{x+u}{1},$$
where $x\in\F_q$.
There is only one fixed point and there are $q/p$ orbits of size $p$. 

If the characteristic polynomial of $\sigma_s$ does not split in $\F_q$, we call $\overline{\sigma}_s$ \defi{non-split semisimple}. The action of $\overline{\sigma}_s$ has no fixed points because this would imply that $\sigma_s$ has an eigenvector. The splitting field of the characteristic polynomial of $\sigma_s$ is $\F_{q^2}$. Let $\alpha_1,\alpha_2\in\F_{q^2}\setminus\F_q$ be the roots of this polynomial. Then $\sigma_s$ is conjugate with the diagonal matrix $[\alpha_1,\alpha_2]$ with $\sigma_s=T^{-1}[\alpha_1,\alpha_2]T$ for some invertible matrix $T$. For all $m\in\N$ such that $\overline{\sigma}_s^m$ fixes $(x:y)^t\in\P^1(\F_q)$, we have that
$$\begin{pmatrix}
\alpha_1^m&0\\0&\alpha_2^{m}
\end{pmatrix}\left(T\Pone{x}{y}\right)=\left(T\Pone{x}{y}\right).$$
From the analysis of the split semisimple case, we conclude that every orbit has length $s$. Thus, the action of $\sigma_s$ on $\P^1(\F_q)$ has $(q+1)/s$ orbits of length $s$.
\end{proof}

The previous lemma does not consider the case when $s=2$ and $q$ is odd.  The ambiguity arises since if $s=2$ then $s\mid (q-1)$ and $s\mid (q+1)$, so $\overline{\sigma}_2$ can be either split or non-split (semisimple).

\begin{example}\label{rem:aEquals2}
    For $(a,b,c)=(2,3,8)$ and $G=\PGL_2(\F_7)$, we have $\sigma_2$ split.  
    On the other hand, for $(2,6,6)$ and $G=\PGL_2(\F_7)$, we have $\sigma_2$ non-split.
\end{example}

 The following lemma partially solves this problem. 

\begin{lemma}\label{lem:2cyclesDescriprtionPSL}
    Let $G=\PSL_2(\F_q)$ with $q$ odd, and let $\overline{\sigma}_2\in G$ be an element of order $2$. Then the action of $\overline{\sigma}_2$ on $\P^1(\F_q)$ has:
    \begin{enumroman}
        \item two fixed points and $(q-1)/2$ orbits of size $2$ if $-1$ is a square modulo $q$; and
        \item (no fixed points and) $(q+1)/2$ orbits of size $2$, otherwise.
    \end{enumroman}
\end{lemma}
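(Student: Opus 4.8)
The plan is to exploit the structural difference between $\PSL_2$ and $\PGL_2$: inside $\PSL_2(\F_q)$ with $q$ odd, any lift of $\overline{\sigma}_2$ to $\SL_2(\F_q)$ is forced to have square $-I$ (rather than $+I$), and this pins down its characteristic polynomial---hence whether it is split or non-split semisimple. This is exactly the data that the proof of \Cref{lem:cyclesDescription} needs but that the statement of \Cref{lem:cyclesDescription} leaves ambiguous for $s = 2$, $q$ odd.

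First I would pick a lift $\sigma \in \SL_2(\F_q)$ of $\overline{\sigma}_2$. Since $\overline{\sigma}_2$ has order $2$ in $\PSL_2(\F_q)$, the matrix $\sigma^2$ is scalar, so $\sigma^2 \in \{I, -I\}$, while $\sigma \notin \{I, -I\}$. If $\sigma^2 = I$, then the minimal polynomial of $\sigma$ would be $(x-1)(x+1)$, forcing eigenvalues $1$ and $-1$ and hence $\det \sigma = -1$, contradicting $\sigma \in \SL_2(\F_q)$; so $\sigma^2 = -I$. Thus $\sigma$ satisfies $x^2 + 1$, and since $\sigma$ is not scalar its minimal polynomial---and therefore, being $2 \times 2$, its characteristic polynomial---is $x^2 + 1$.

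Next I would split according to whether $-1$ is a square modulo $q$. If it is, then $x^2 + 1$ has two distinct roots in $\F_q$, so $\sigma$ is diagonalizable over $\F_q$, i.e.\ $\overline{\sigma}_2$ is split semisimple; the split semisimple analysis in the proof of \Cref{lem:cyclesDescription} (which is valid for $s = 2$ as well) gives two fixed points and $(q-1)/2$ orbits of size $2$. If $-1$ is not a square, then $x^2 + 1$ is irreducible over $\F_q$ and splits over $\F_{q^2}$, so $\overline{\sigma}_2$ is non-split semisimple, and the corresponding analysis gives no fixed points and $(q+1)/2$ orbits of size $2$. As a sanity check, the orbit lengths sum to $q + 1 = \#\P^1(\F_q)$ in both cases.

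The only real subtlety is the determinant argument excluding $\sigma^2 = I$: this is exactly where membership in $\PSL_2$ rather than $\PGL_2$ enters, and it is the observation that resolves the ambiguity noted after \Cref{lem:cyclesDescription} (and illustrated by \Cref{rem:aEquals2}, where the $\PGL_2$ case genuinely can go either way). Everything else is a direct appeal to the orbit computations already carried out in the proof of \Cref{lem:cyclesDescription}, so once we know which case $\overline{\sigma}_2$ falls into, nothing new needs to be computed.
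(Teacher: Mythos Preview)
Your proof is correct and follows essentially the same route as the paper: lift to $\SL_2(\F_q)$, use the determinant-$1$ constraint to force the characteristic polynomial to be $x^2+1$, and then branch on whether $-1$ is a square to land in the split or non-split semisimple case of \Cref{lem:cyclesDescription}. The only cosmetic difference is that the paper reaches $x^2+1$ by noting the characteristic polynomial divides $x^4-1$ with constant term $1$, whereas you rule out $\sigma^2=I$ directly via the eigenvalue/determinant contradiction; both are the same observation.
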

\begin{proof}
    Let $\overline{\sigma}_2$ be a matrix of order 2 in $\PSL_2(\F_q)$.  Pick a lift $\sigma_2\in\SL_2(\F_q)$ of $\overline{\sigma}_2$. Because $\sigma_2^4$ is the identity, its characteristic polynomial must be a quadratic polynomial dividing $x^4-1$. In addition, the constant of this polynomial must be 1 since this is the determinant of $\sigma_2$. The only possibility for such a polynomial is $x^2+1$.  
    If $-1 \in \F_q^{\times 2}$, then this characteristic polynomial splits with distinct roots, so we are in the split semisimple case of \Cref{lem:cyclesDescription}.  Otherwise, $-1$ is not a square and we are in the non-split semisimple case.  
\end{proof}

Now we are ready to give a formula for the genus $g$ of $X_0(a,b,c;p)$.  For $x \in \R$, we write $\lfloor x \rceil$ for the rounding down of $x$, so $\lfloor 3/2 \rceil = 1$.

\begin{theorem} \label{the:genusX0}
Let $(a,b,c)$ be a hyperbolic admissible triple and $\frakp$ be a prime of $E$ above a rational prime $p$.  Then the genus of $X_0(a,b,c;\frakp)$ is given by
\begin{equation}\label{eqn:genusX0}
    g(X_0(a,b,c;\frakp))=-q+\frac{1}{2}\sum_{s\in\{a,b,c\}}\left\lfloor \frac{q}{s} \right\rceil (s-1) + \epsilon(a,b,c;\frakp)
\end{equation}
where $q \colonequals \Nm(\frakp)$ and $\epsilon(a,b,c;\frakp) \in \{0,1/2\}$ is uniquely determined by  $g(X_0(a,b,c;\frakp)) \in \Z$.  Moreover, we have $\epsilon(a,b,c;\frakp)=0$ unless $a=2$ and $q$ is odd.
\end{theorem}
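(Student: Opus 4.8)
The plan is to run the Riemann--Hurwitz formula on the Belyi map $\varphi_\frakp\colon X_0(a,b,c;\frakp)\to\PP^1$. First note that admissibility forces $\pi_\frakp(\delta_s)$ to have finite order $s$, so $a,b,c\in\Z_{\ge 2}$ and there are no cusps; moreover the cover has degree $[G:H_0]=q+1$ (as computed in the setup above), is connected because $G$ acts transitively on $G/H_0$, and is unramified outside $\{0,1,\infty\}$. As recalled, the fibre over the point attached to the vertex labelled $s$ has ramification governed by the cycle decomposition of $\pi_\frakp(\delta_s)$ acting on $G/H_0\cong\PP^1(\F_q)$: if this permutation has $k_s$ cycles, that fibre contributes $(q+1)-k_s$ to the degree of the ramification divisor. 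Hence $2g-2=-2(q+1)+\sum_{s}\bigl((q+1)-k_s\bigr)=(q+1)-\sum_s k_s$, and everything reduces to computing $\sum_s k_s$.

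Next I would compute each $k_s$. Since $\frakp$ is admissible, $\ord\pi_\frakp(\delta_s)=s$ exactly, and by \Cref{the:ClarkVoight} we have $G=\PXL_2(\F_q)$. For $s\ge 3$ (or $s=2$ with $q$ even) \Cref{lem:cyclesDescription} applies: the order $s$ is then a divisor of $q-1$, equal to $p$, or a divisor of $q+1$, and a direct check of the rounding convention shows that $\lfloor q/s\rceil$ equals $(q-1)/s$, $q/p$, $(q+1)/s$ in the three respective regimes. In all three cases one finds $(q+1)-k_s=\lfloor q/s\rceil(s-1)$ on the nose, with no correction. The one remaining case is $s=a=2$ with $q$ odd (here $b,c\ge 3$ by hyperbolicity): then $\pi_\frakp(\delta_2)$ is a semisimple involution, either split --- two fixed points and $(q-1)/2$ transpositions --- or non-split --- $(q+1)/2$ transpositions --- following the split/non-split analysis inside the proof of \Cref{lem:cyclesDescription} (and resolved explicitly by \Cref{lem:2cyclesDescriprtionPSL} when $G=\PSL_2(\F_q)$). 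In the split case $(q+1)-k_2=\lfloor q/2\rceil(2-1)$, while in the non-split case $(q+1)-k_2=\lfloor q/2\rceil(2-1)+1$.

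Assembling these, $\sum_s\bigl((q+1)-k_s\bigr)=\sum_s\lfloor q/s\rceil(s-1)+\eta$ with $\eta\in\{0,1\}$ and $\eta=0$ unless $a=2$ and $q$ is odd. Substituting into $2g-2=-2(q+1)+\sum_s\bigl((q+1)-k_s\bigr)$ and solving for $g$ yields exactly \eqref{eqn:genusX0} with $\epsilon\colonequals\eta/2\in\{0,1/2\}$, which already gives the asserted vanishing behaviour. For the uniqueness claim: the two candidate right-hand sides ($\epsilon=0$ versus $\epsilon=1/2$) differ by $1/2$, so at most one can be an integer; since the genus is an integer and equals one of them, $\epsilon$ is forced to be the choice making the right-hand side integral.

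The main obstacle is precisely this ambiguous case $a=2$, $q$ odd: whether $\pi_\frakp(\delta_2)$ lands in a split or a non-split involution is not determined by $(a,b,c)$ and $\frakp$ alone (compare \Cref{rem:aEquals2}), which is why the correction $\epsilon$ can be characterized only indirectly, via integrality of the genus, rather than by a closed-form rule --- making the statement well-posed is the delicate point. A secondary thing to be careful about is the bookkeeping with $\lfloor\cdot\rceil$: one must verify that the three divisibility regimes for $\ord\pi_\frakp(\delta_s)$ line up exactly with the three values of $\lfloor q/s\rceil$, so that for $s\ge 3$ the fibre contributions collapse cleanly to $\lfloor q/s\rceil(s-1)$ with nothing left over.
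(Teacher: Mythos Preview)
Your proof is correct and follows essentially the same route as the paper: apply Riemann--Hurwitz to the degree-$(q+1)$ Belyi map, read off the ramification over each vertex from the cycle structure of $\pi_\frakp(\delta_s)$ on $\PP^1(\F_q)$ via \Cref{lem:cyclesDescription} and \Cref{lem:2cyclesDescriprtionPSL}, and isolate the ambiguous case $a=2$, $q$ odd as the source of the $\epsilon$-correction. If anything, you are a bit more explicit than the paper in checking that the three divisibility regimes for $s\ge 3$ collapse to the single expression $\lfloor q/s\rceil(s-1)$ (the paper's proof records the values $k_s$ in \eqref{eqn:ks} but leaves the identification with $\lfloor q/s\rceil$ implicit), and in spelling out the uniqueness of $\epsilon$ via the half-integer gap.
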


In the latter case ($a=2$ and $q$ odd), \Cref{lem:2cyclesDescriprtionPSL} implies that when $G=\PSL_2(\F_q)$, we have $\epsilon(a,b,c;\frakp)=0$ if and only if $q \equiv 1 \pmod{4}$ (case (i)).  

\begin{proof}
    Consider elements $\overline{\sigma}_a,\,\overline{\sigma}_b,\,\overline{\sigma}_c\in \PXL_2(\F_q)$ of orders $a^,\,b,$ and $c$, respectively, such that $\sigma_a\sigma_b\sigma_c=1$. We recall that the map $X_0(a,b,c;\frakp)\to X(1)$ has degree $q+1$ since $[G:H_0]=q+1$. The Riemann--Hurwitz formula implies
\begin{equation}\label{eqn:riemannHurwitzX0}
    2g-2=-2(q+1)+\epsilon_a+\epsilon_b+\epsilon_c,
\end{equation}
where $\epsilon_s$ is the ramification index at the points that ramify. We can compute $\epsilon_s$ from \Cref{lem:cyclesDescription} and \Cref{lem:2cyclesDescriprtionPSL}, with $\epsilon_s=k_s(s-1)$, where
\begin{equation} \label{eqn:ks}
k_s=\begin{cases} (q-1)/s, &\text{ if $s\mid (q-1)$};\\
q/s,& \text{ if $s \mid q$};\\
(q+1)/s& \text{ if $s \mid (q+1)$};
\end{cases}
\end{equation}
if $s\neq 2$ or ($s=a=2$ and $q$ is even); whereas if $s=a=2$ and $q$ is odd, then either $k_2=(q+1)/2$ or $k_2=(q-1)/2$ is determined by the fact that $g \in \Z$, since they differ by $1$.
\end{proof}

\begin{remark}
    Instead of using parity, in the $\PGL_2(\F_q)$ and $q$ odd case, we can always explicitly compute elements $\overline{\sigma}_2$, $\overline{\sigma}_b$, $\overline{\sigma}_c \in G$, of orders $2$, $b$, and $c$ respectively, such that $\overline{\sigma}_2\overline{\sigma}_b\overline{\sigma}_c=1$. We can then decide if $\overline{\sigma}_2$ is split or non-split and use \Cref{lem:cyclesDescription} to compute the ramification. 
\end{remark}
\subsection*{Algorithm}

We present an implementation of \Cref{the:genusX0}. 

\begin{algorithm}\label{alg:findGenus} 
Let $(a,b,c)$ be a hyperbolic triple and let $\frakp \subseteq \Z_{E(a,b,c)}$ be a nonzero prime ideal. This algorithm computes the genus of $X_0(a,b,c;\frakp)$ and the Galois group $G_\frakp$ of the cover $X(a,b,c;\frakp)\to \P^1$.

\begin{enumalg}
    \item Compute the residue field of $\frakp$ and set $q\colonequals\#\F_\frakp$. 
    \item Compute the residue field $\Z_F/\frakp_F$, where $\frakp_F$ is a prime of $F(a,b,c)$ above $\frakp$. If $\F_q\simeq \Z_F/\frakp_F$, then $G= \PSL_2(\F_q)$. Otherwise set $G=\PGL_2(\F_q)$.
    \item Compute $g$ using \Cref{the:genusX0}.
\end{enumalg}
\end{algorithm}

\begin{proof}[Proof of correctness]
Correctness follows from the formula in \Cref{the:genusX0}.  Steps 1 and 2 can be performed by constructing the algebraic number field; it can also be done purely in terms of the prime number $p$ below $\frakp$ as in \Cref{alg:isSplit}.
\end{proof}

\subsection*{Bounding the genus}
Our goal remains to show that, for fixed genus $g_0$, there are finitely many admissible curves $X_0(a,b,c;\frakp)$ of genus $g\le g_0$. We first characterize the hyperbolic triples $(a,b,c)$ such that the curve $X(a,b,c)$ has Galois group $\PXL_2(\F_q)$, for a given $q$.

In the prime case, the notion of admissible ideal can be turned around, as follows.

\begin{definition}\label{def:qadmissible}
Let $q \colonequals p^r$ be a power of a prime number $p$. A hyperbolic triple $(a,b,c)$ is \defi{$q$-admissible} if $s$ divides at least one integer in the set $\{q-1,\,p,\,q+1\}$
for all $s\in\{a,b,c\}$, not including $\infty$.
\end{definition}

\begin{lemma}\label{lem:possibleOrders}
For any triangular modular curve $X_0(a,b,c;\frakp)$ with $q \colonequals \Nm \frakp$ and $\frakp$ admissible for $(a,b,c)$, the triple $(a,b,c)$ is $q$-admissible.
\end{lemma}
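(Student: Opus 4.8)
The plan is to read off $q$-admissibility directly from \Cref{the:ClarkVoight} together with the classification of the orders of elements of $\PXL_2(\F_q)$ that is already carried out in the proof of \Cref{lem:cyclesDescription}; the lemma is essentially a corollary of those two facts.

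First I would unwind the hypotheses. Since $\frakp$ is admissible for $(a,b,c)$ (\Cref{def:admissible}), all of $a,b,c$ are finite---an element of the finite group $\PGL_2(\F_q)$ cannot have infinite order---so the parenthetical exclusion of $\infty$ in \Cref{def:qadmissible} is vacuously satisfied. Moreover, by admissibility $\pi_\frakp(\delta_s)$ has order exactly $s \ge 2$ for each $s \in \{a,b,c\}$; in particular it is not the identity. By \Cref{the:ClarkVoight}, $G_\frakp = \PXL_2(\Z_E/\frakp) \simeq \PXL_2(\F_q)$, so each $\pi_\frakp(\delta_s)$ is a nonidentity element of $\PXL_2(\F_q)$ of order $s$.

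Next I would invoke the trichotomy of conjugacy classes recalled in the proof of \Cref{lem:cyclesDescription}: lifting a nonidentity element $\overline{\sigma} \in \PXL_2(\F_q)$ to $\sigma \in \GL_2(\F_q)$, either $\sigma$ is diagonalizable over $\F_q$ and $\overline{\sigma}$ is split semisimple of order dividing $q-1$; or $\sigma$ has a repeated eigenvalue but is not diagonalizable, so $\overline{\sigma}$ is unipotent of order $p$; or the characteristic polynomial of $\sigma$ is irreducible over $\F_q$, so $\overline{\sigma}$ is non-split semisimple of order dividing $q+1$. In every case $\ord(\overline{\sigma})$ divides one of $q-1$, $p$, $q+1$. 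Applying this with $\overline{\sigma} = \pi_\frakp(\delta_s)$ shows that for each $s \in \{a,b,c\}$ the integer $s$ divides an element of $\{q-1,\,p,\,q+1\}$, which is precisely the defining condition for $(a,b,c)$ to be $q$-admissible.

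I expect no genuine obstacle here, since the arithmetic content is already packaged in \Cref{the:ClarkVoight} and in the case analysis of \Cref{lem:cyclesDescription}; the lemma is mostly bookkeeping. The only points meriting a word of care are that admissibility excludes $s = \infty$ from the outset, and that for $s = 2$ with $q$ odd the element $\pi_\frakp(\delta_2)$ may be either split or non-split semisimple---but in either sub-case $2 \mid q \pm 1$, so $q$-admissibility holds unconditionally.
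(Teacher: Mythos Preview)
Your proposal is correct and follows essentially the same approach as the paper: the paper's proof is the one-line observation that, as shown in the proof of \Cref{lem:cyclesDescription}, every element of $\PXL_2(\F_q)$ has order dividing one of $q-1$, $p$, or $q+1$. Your argument is simply a careful unwinding of the same idea, making explicit the role of admissibility and \Cref{the:ClarkVoight}.
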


\begin{proof}
As shown in the proof of \Cref{lem:cyclesDescription}, the order of every element in $\PXL_2(\F_q)$ needs to divide one of $\{q-1,p,q+1\}$.
\end{proof}

\begin{proposition}\label{prop:boundOng}
Let $g$ be the genus of the triangular modular curve $X_0(a,b,c;\frakp)$ and set $q\colonequals \Z_E/ \frakp$. Then,
\begin{equation*}
    q\le 84(g+1)+1.
\end{equation*}
\end{proposition}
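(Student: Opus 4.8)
The plan is to re-run the Riemann--Hurwitz argument behind \Cref{the:genusX0}, but keeping only a one-sided estimate on the ramification. Since $\frakp$ is admissible, \Cref{the:ClarkVoight} gives $G_\frakp=\PXL_2(\F_q)$, and the cover $X_0(a,b,c;\frakp)\to\PP^1$ has degree $[G_\frakp:H_{0,\frakp}]=q+1$, ramified only over $0,1,\infty$. Write $\overline\sigma_a,\overline\sigma_b,\overline\sigma_c\in G_\frakp$ for the images of $\delta_a,\delta_b,\delta_c$, of orders $a,b,c$. The first step is to bound \emph{below} the ramification $\epsilon_s$ contributed by $\overline\sigma_s$ over its branch point: by \Cref{lem:cyclesDescription} (together with \Cref{lem:2cyclesDescriprtionPSL} when $s=2$ and $q$ is odd), acting on $\PP^1(\F_q)$ the element $\overline\sigma_s$ has $f_s\in\{0,1,2\}$ fixed points and all remaining points lie in orbits of length exactly $s$, so the number of length-$s$ orbits is $k_s=(q+1-f_s)/s\ge (q-1)/s$, whence $\epsilon_s=k_s(s-1)\ge (q-1)(1-1/s)$.

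Next I would substitute these three inequalities into the Riemann--Hurwitz identity $2g-2=-2(q+1)+\epsilon_a+\epsilon_b+\epsilon_c$, obtaining
\[ 2g-2 \ \ge\ -2(q+1) + (q-1)\!\left(3-\tfrac1a-\tfrac1b-\tfrac1c\right). \]
Using $\tfrac1a+\tfrac1b+\tfrac1c = 1+\chi(a,b,c)$ from \eqref{eqn:chi}, the parenthesis equals $2-\chi(a,b,c)$, and a one-line simplification collapses the $q$-terms to give $2g-2\ge -4-(q-1)\chi(a,b,c)$, i.e.
\[ (q-1)\bigl(-\chi(a,b,c)\bigr)\ \le\ 2(g+1). \]
Finally, since $(a,b,c)$ is hyperbolic, \eqref{eqn:boudChi42} yields $-\chi(a,b,c)\ge 1/42$, so $(q-1)/42\le 2(g+1)$ and therefore $q\le 84(g+1)+1$, as claimed.

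I do not expect a genuine obstacle here; the one point requiring care is the lower bound $k_s\ge (q-1)/s$ in the ambiguous case $a=2$ with $q$ odd, where $\overline\sigma_2$ may be split or non-split semisimple. In either sub-case $k_2\in\{(q-1)/2,(q+1)/2\}$, so $k_2\ge (q-1)/2$ still holds and the $\epsilon(a,b,c;\frakp)$ ambiguity in \Cref{the:genusX0} is irrelevant to the estimate. One could also phrase the fixed-point count uniformly, using that any nonidentity element of $\PGL_2(\overline{\F}_q)$ fixes at most two points of $\PP^1$, which gives $f_s\le 2$ directly without a case analysis; I would likely include this remark so the bound $k_s\ge(q-1)/s$ is transparent.
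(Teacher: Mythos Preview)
Your proposal is correct and follows essentially the same route as the paper: bound each $\epsilon_s$ below by $(q-1)(1-1/s)$ using the orbit description in \Cref{lem:cyclesDescription}, plug into Riemann--Hurwitz to get $g\ge \tfrac{q-1}{2}\lvert\chi(a,b,c)\rvert-1$, and finish with $\lvert\chi\rvert\ge 1/42$. The only cosmetic difference is that the paper first disposes of the degenerate case where $(a^\sharp,b^\sharp,c^\sharp)$ is not hyperbolic via \Cref{prop:otherCasesX0}, whereas you invoke admissibility directly (consistent with the standing hypotheses of the section).
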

\begin{proof}
    Let $s^\sharp$ be the order of $\pi_\frakp(\delta_s)$.  The cases where $(a^\sharp,b^\sharp,c^\sharp)$ is not hyperbolic are handled in \Cref{prop:otherCasesX0}: we get $g=0$, and the inequality holds.  So we may suppose without loss of generality that $s^\sharp=s$ for $s=\{a,b,c\}$, and still that $(a,b,c)$ is hyperbolic.
    
    We study the Belyi map $X_0(a,b,c;\frakp)\to \P^1$. Let $\epsilon_a,\epsilon_b,\epsilon_c$ be the ramification degrees of this map. Using \Cref{lem:cyclesDescription}, we have that for $s\in\{a,b,c\}$,
    \begin{equation}\label{eqn:boundsEi}
        (q-1)-\frac{q-1}{s}=\frac{(s-1)(q-1)}{s}\le \epsilon_s\le \frac{(s-1)(q+1)}{s}=(q+1)-\frac{q+1}{s}.
    \end{equation}
    Because of these bounds and \eqref{eqn:riemannHurwitzX0},
\begin{equation}
\begin{aligned}
        g(X_0(a,b,c;\frakp))&\ge -(q+1)+\frac{(a-1)(q-1)}{2a}+\frac{(b-1)(q-1)}{2b}+\frac{(c-1)(q-1)}{2c}+1  \\
        &=(q-1)\left(-1+\frac{3}{2}-\frac{1}{2a}-\frac{1}{2b}-\frac{1}{2c}\right)-1\\
        &=\frac{q-1}{2}\left|\chi(a,b,c)\right|-1,
    \end{aligned}
    \end{equation}
    where $\chi(a,b,c)$ is as in \eqref{eqn:chi}.
    The result then follows from the previous inequality and \eqref{eqn:boudChi42}.
\end{proof}

\begin{corollary}\label{cor:X0ListBounded}
For a fixed genus $g_0\in\Z_{\ge0}$, there are only finitely many hyperbolic triples $(a,b,c)$ and admissible primes $\frakp$ such that the curves $X_0(a,b,c;\frakp)$ have genus $g\le g_0$.
\end{corollary}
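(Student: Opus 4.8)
The plan is to assemble the ingredients already in place. First I would observe that, since $\frakp$ is assumed admissible for $(a,b,c)$, by \Cref{def:admissible} the order $s^\sharp$ of $\pi_\frakp(\delta_s)$ equals $s$ for each $s \in \{a,b,c\}$; because $\pi_\frakp(\delta_s)$ lies in the finite group $\PGL_2(\Z_E/\frakp)$, its order is finite, so necessarily $a,b,c \in \Z_{\geq 2}$. In particular $(a^\sharp,b^\sharp,c^\sharp)=(a,b,c)$ is a genuinely hyperbolic triple, so the non-hyperbolic-reduction cases of \Cref{prop:otherCasesX0} do not arise here and \Cref{prop:boundOng} applies directly, giving
\[ q \colonequals \Nm(\frakp) \leq 84(g_0+1)+1. \]
Hence there are only finitely many possible values for the norm $q$, and thus for the rational prime $p$ lying below $\frakp$.

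Next I would invoke the $q$-admissibility constraint: by \Cref{lem:possibleOrders} the triple $(a,b,c)$ is $q$-admissible, so each of $a,b,c$ divides one of $q-1$, $p$, or $q+1$; since each $s\geq 2$, this forces $s \leq q+1 \leq 84(g_0+1)+2$. Therefore only finitely many triples $(a,b,c)$ can occur. Finally, for each such triple the field $E=E(a,b,c)$ is a fixed number field, and any number field contains only finitely many prime ideals of norm at most $84(g_0+1)+1$; so for each admissible triple there are only finitely many admissible primes $\frakp$ of the requisite norm. Combining these three finiteness statements — for $q$, for $(a,b,c)$, and for $\frakp$ given $(a,b,c)$ — shows there are only finitely many pairs $\big((a,b,c),\frakp\big)$ with $\frakp$ admissible and $g(X_0(a,b,c;\frakp)) \leq g_0$, and a fortiori only finitely many such curves.

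The only point that really needs care is that the genus bound of \Cref{prop:boundOng} by itself controls only $(q-1)|\chi(a,b,c)|$, and a bound on $|\chi(a,b,c)|$ alone does \emph{not} bound $a,b,c$ — for instance $(2,3,c)$ has $|\chi(a,b,c)| \to 1/6$ as $c \to \infty$. The divisibility input of \Cref{lem:possibleOrders}, available precisely because admissibility makes every $s^\sharp=s$ finite, is what closes this gap; everything else is bookkeeping on top of the results of \Cref{sec:triang} and this section.
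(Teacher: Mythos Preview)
Your argument is correct and follows essentially the same route as the paper's proof: bound $q$ via \Cref{prop:boundOng}, then bound each $s \in \{a,b,c\}$ by $q+1$ using $q$-admissibility (\Cref{lem:possibleOrders}), leaving only finitely many possibilities. Your version is more explicit---you spell out why admissibility forces $a,b,c \in \Z_{\geq 2}$, why finitely many $\frakp$ of bounded norm exist in each $E(a,b,c)$, and why the $\chi$-bound alone would not suffice---but the skeleton is identical to the paper's two-sentence proof.
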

\begin{proof}
    By \Cref{prop:boundOng}, we obtain an upper bound on the rational prime $p$ given by $q\le 84(g_0+1)+1$. Also, for $(a,b,c)$ to be $q$-admissible, necessarily $s\le q+1$ for all $s\in\{a,b,c\}$.  This leaves only finitely many possibilities.
\end{proof}

\begin{remark}
To make computations more efficient, we can consider a bound on $q$ that depends on $\chi(a,b,c)$.  For the genus of $X_0(a,b,c;\frakp)$ to be less than or equal to $g_0$, it is necessary that 
\begin{equation}\label{eqn:boundq}
    q\le \frac{2(g_0+1)}{\left|\chi(a,b,c)\right|}+1.
\end{equation}
This inequality also shows that
\begin{equation}\label{eqn:boundChi}
    0< \left|\chi(a,b,c)\right|\le \frac{2(g_0+1)}{q-1}.
\end{equation}
Therefore, we can bound $a$, $b$, and $c$ whenever $q$ is fixed. 
\end{remark}

\section{Enumerating curves of low genus}\label{sec:enumeration}

We present the main algorithms that use the theory developed in \cref{sec:X0}. The goal of this section is to effectively enumerate the curves $X_0(a,b,c;\frakp)$ of bounded genus. The number of curves is finite from \Cref{cor:X0ListBounded}. As explained in \cref{sec:curveDef}, if $\frakp$ is admissible, then $G$ is given by $\PXL_2(\F_q)$.  The first condition (coprimality) in admissibility can be expensive to check, so we first check the easier necessary (but not sufficient) condition that $\frakp \nmid \beta(a,b,c)$.  

\begin{algorithm}\label{alg:isSplit} 
Let $(a,b,c)$ be a hyperbolic triple and $p$ be a prime number. This algorithm returns \verb|true| if there exists a prime $\frakp \subseteq \Z_{E(a,b,c)}$ above $p$ such that $\frakp \nmid \beta(a,b,c)$.  

\begin{enumalg}
    \item If $p\nmid 2abc$, then return \verb|true|.
    \item Find $\F_\frakp=\F_q$, where $\frakp$ is any prime of $E$ above $p$.
    \item Set $m \colonequals \lcm(a,b,c)$. Construct $\F_q(\zeta_{2m})$. Set $z \colonequals \zeta_{2m}$.
    \item For every $i\in(\Z/2m\Z)^\times$, and set $l_{2s}\colonequals z^{im/s}+1/z^{im/s}$ for $s\in\{a,b,c\}$. Compute
    $$\beta_i \colonequals l_{2a}^2 + l_{2b}^2 + l_{2c}^2 + l_{2a}l_{2b}l_{2c} - 4.$$  
    If $\beta_i \neq 0$ and whenever $p \mid s$ we have $s=p$, then return \verb|true|.  Otherwise, return \verb|false|.
\end{enumalg}
\end{algorithm}

\begin{proof}[Proof of correctness]
    Let $\frakp$ be a prime of $\Z_{E(a,b,c)}$ above $p$.
    If $p \nmid 2abc$ then $\frakp \nmid \beta(a,b,c)$ \cite[Lemma 5.5]{ClarkVoight}. When $\frakp \mid abc$, checking that $\frakp$ does not divide $\beta(a,b,c)$ is more involved. We do this in steps 2 to 4 by computing $\beta$ in the residue field of $\frakp$. This computation is independent of the prime $\frakp$ chosen above $p$ because $E$ is Galois over $\Q$.
\end{proof}

Now we are ready to present the main algorithm that ties the results of \cref{sec:X0} into an explicit enumeration.
\begin{algorithm}\label{alg:enumerationX0}
    Returns a list \verb|lowGenus| of all hyperbolic triples $(a,b,c)\in\Z_{\ge 2}^3$ and norms of prime ideals $\frakp$ of $E(a,b,c)$ that are admissible such that the genus of $X_0(a,b,c;\frakp)$ is at most $g_0$.
    \begin{enumalg}
        \item Loop over the list of possible powers $q=p^r$, where $p$ is a prime number and $q\le 84(g_0+1)+1$.
        \item For each $q$ from step 1, find all $q$-admissible hyperbolic triples $(a,b,c)$ (as in \Cref{def:qadmissible}).
        \item For each $q$-admissible triple $(a,b,c)$ from step 2, check if $\chi(a,b,c)$ satisfies \eqref{eqn:boundChi} and if $\frakp$ does not divide $\beta(a,b,c)$ using \Cref{alg:isSplit}. If yes, compute the candidate genus $g$ of $X_0(a,b,c;\frakp)$ using \Cref{alg:findGenus}. 
        \item If $g\le g_0$, check that $\frakp \nmid \discrd(\Lambda)\frakd_{F|E}$.  If yes, add $(a,b,c;q)$ to the list \verb|lowGenus|.
    \end{enumalg}
\end{algorithm}

\begin{proof}[Proof of correctness]
For step 1, see \Cref{prop:boundOng}.  
    Every hyperbolic $q$-admissible triple gives rise to one such curve. The correctness of the rest of the algorithm follows from the work done in \cref{sec:X0}.
\end{proof}

    We list the CPU time (in seconds) for our implementation to compute the list of curves $X_0(a,b,c;\frakp)$ of genus up to bounds 0, 1, and 2 on a standard laptop:
    \begin{equation*}
\bgroup
\def\arraystretch{1.15}
    \begin{array}{c||c|c|c}
        \text{Genus bound} &0&1 &2\\
    \hline\hline
    \text{Time (s)}& 1.7& 9.7 & 1110.3
    \end{array}
    \egroup
\end{equation*}

\section{Triangular modular curves \texorpdfstring{$X_1(a,b,c;\frakp)$}{X1'(a,b,c;p)}}\label{sec:X1}

In this section, we use  \cref{sec:X0} to give analogous results for triangular modular curves $X_1(a,b,c;\frakp)$, completing the proof of our main result.  

We recall that $X_1(a,b,c;\frakp)$ is defined in \eqref{eqn:X0X1def} as the quotient of $\calH$ by $\Gamma_1(a,b,c;\frakp)$. 

\begin{corollary}\label{cor:finiteX1}
    For any integer $g_0\ge0$, there are finitely many triangular modular curves $X_1(a,b,c;\frakp)$
    with $\frakp$ admissible.
\end{corollary}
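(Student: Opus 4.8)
The plan is to reduce the finiteness statement for $X_1(a,b,c;\frakp)$ to the corresponding statement for $X_0(a,b,c;\frakp)$, which is already established in \Cref{cor:X0ListBounded}. The key observation is that $X_1(a,b,c;\frakp)$ sits above $X_0(a,b,c;\frakp)$ via the natural quotient map from \eqref{eqn:covers}, so its genus is bounded below by the genus of $X_0(a,b,c;\frakp)$. More precisely, a nonconstant morphism of curves $X_1 \to X_0$ satisfies $g(X_1) \geq g(X_0)$ by Riemann--Hurwitz (the genus is monotone under covers). Hence if $g(X_1(a,b,c;\frakp)) \leq g_0$, then $g(X_0(a,b,c;\frakp)) \leq g_0$ as well, and there are only finitely many such triples $(a,b,c)$ and admissible primes $\frakp$.

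Concretely, first I would recall from \eqref{eqn:covers} that there is a cover $X_1(a,b,c;\frakp) \to X_0(a,b,c;\frakp)$, corresponding to the inclusion of groups $\Gamma_1(a,b,c;\frakp) \leq \Gamma_0(a,b,c;\frakp)$, equivalently $H_{1,\frakp} \leq H_{0,\frakp}$. This is a nonconstant map of compact Riemann surfaces, so $g(X_1(a,b,c;\frakp)) \geq g(X_0(a,b,c;\frakp))$. Next I would invoke \Cref{prop:boundOng}: the genus of $X_0(a,b,c;\frakp)$ controls $q = \Nm(\frakp)$ via $q \leq 84(g+1)+1$, and in turn bounds $a,b,c$ since $q$-admissibility forces $s \leq q+1$ for $s \in \{a,b,c\}$. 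Combining, a bound $g(X_1(a,b,c;\frakp)) \leq g_0$ gives $q \leq 84(g_0+1)+1$, leaving finitely many $q$, hence finitely many triples and finitely many residue characteristics, hence finitely many primes $\frakp$.

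There is no real obstacle here; this corollary is a soft consequence of the work already done for $X_0$. The only thing to be slightly careful about is that the map $X_1(a,b,c;\frakp) \to X_0(a,b,c;\frakp)$ is genuinely nonconstant (i.e.\ a finite cover of degree $[H_{0,\frakp} : H_{1,\frakp}] \geq 1$), which is immediate from the construction in \eqref{eqn:X0X1def}, and that monotonicity of genus under covers is being applied to compact (completed) Riemann surfaces, which is the setting throughout. One could also give a self-contained argument paralleling \Cref{prop:boundOng} by computing the degree $[G_\frakp : H_{1,\frakp}]$ and the cycle structure of $\overline{\sigma}_a, \overline{\sigma}_b, \overline{\sigma}_c$ acting on $G_\frakp/H_{1,\frakp}$, but the comparison argument is shorter and suffices for the stated finiteness. (A precise genus formula for $X_1(a,b,c;\frakp)$ analogous to \Cref{the:genusX0}, needed for the explicit enumeration in \Cref{thm:mainthm}, would be developed separately in the remainder of this section.)
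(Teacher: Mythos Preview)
Your proposal is correct and follows essentially the same approach as the paper: use the cover $X_1(a,b,c;\frakp)\to X_0(a,b,c;\frakp)$ from \eqref{eqn:covers} together with monotonicity of genus to reduce to \Cref{cor:X0ListBounded}. The paper's proof is terser but identical in content; your additional unpacking of the bounds via \Cref{prop:boundOng} is fine but not needed once \Cref{cor:X0ListBounded} is invoked.
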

\begin{proof}
For every triple $(a,b,c)\in(\Z_{\ge2}\cup\{\infty\})^3$ and prime ideal $\frakp$ of $E(a,b,c)$, there is a cover $X_1(a,b,c;\frakp)\to X_0(a,b,c;\frakp)$.  All curves $X_1(a,b,c;\frakp)$ of genus bounded above by $g_0$ cover curves $X_0(a,b,c;\frakp)$ of genus bounded above by $g_0$.  Because of \Cref{cor:X0ListBounded}, there are finitely many admissible triples $(a,b,c)$ and prime ideals $\frakp$ that give rise to curves $X_0(a,b,c;\frakp)$ of genus bounded above by $g_0$.
\end{proof}

We now focus on explicitly enumerating all curves of bounded genus.  The goal first is to prove group-theoretic results that describe the degree and ramification of the cover $X_1(\frakp)\to X(1)$.  We describe the structure of the quotient $\PXL_2(\F_q)$ modulo $H_{1,\frakp}$ and then describe the action of $\pi_\frakp(\delta_s)$ on this quotient.  The main difference with \cref{sec:X0} is that the quotient $G/H_{0,\frakp}$ does not depend on $G$ being isomorphic to $\PSL_2(\F_q)$ or $\PGL_2(\F_q)$, whereas the structure of $G/H_{1,\frakp}$ depends on the choice of $G$. Let $H_1\colonequals H_{1,\frakp}$.

\begin{lemma}\label{lem:quotientByH1}
    Let $G=\PXL_2(\F_q)$, where $\F_q\colonequals\Z_E/\frakp$. The quotient $G/H_1$ can be described as follows.
    \begin{enumroman}
        \item If $G=\PSL_2(\F_q)$, then $G/H_1\simeq (\F_q\times \F_q\setminus\{(0,0)\})/\langle\pm1\rangle$: explicitly, the class of $(x,z)\in \F_q\times \F_q$ maps to the coset of $\begin{pmatrix} x&y\\z&w\end{pmatrix}$, where $y,w \in \F_q$ satisfy $xw-yz=1$.
        \item If $G=\PGL_2(\F_q)$, then $G/H_1\simeq (\F_q\times \F_q\setminus\{(0,0)\})/\langle\pm1\rangle\times \F_q^\times/\F_q^{\times2}$: explicitly, for $\mu\in \F_q^{\times} \smallsetminus \F_q^{\times 2}$ the class of $((x,y),u)\in ( \F_q\times \F_q\setminus\{(0,0)\})\times \F_q^\times$ maps to the coset of $\begin{pmatrix} x&y\\z&w\end{pmatrix}$, where $z,w \in \F_q$ satisfy $xw-yz=1$ if $u$ is a square and $xw-yz=\mu$ otherwise.
    \end{enumroman}
\end{lemma}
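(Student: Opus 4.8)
The statement is purely group-theoretic: it describes the coset space of $G=\PXL_2(\F_q)$ by the subgroup $H_1$, the image of the upper unipotent group $U=\left\{\left(\begin{smallmatrix}1&b\\0&1\end{smallmatrix}\right):b\in\F_q\right\}$, and the hyperbolic/admissible hypotheses play no role. The plan is to set up the right orbit--stabilizer picture and then unwind it into the explicit description. First I would record the bookkeeping: since the only scalar matrix in $U$ is the identity, $U$ injects into both $\PSL_2(\F_q)$ and $\PGL_2(\F_q)$, so $H_1\cong(\F_q,+)$ has order $q$; hence $[G:H_1]=(q^2-1)/\gcd(2,q-1)$ when $G=\PSL_2(\F_q)$ and $[G:H_1]=q^2-1$ when $G=\PGL_2(\F_q)$, matching the cardinalities of the right-hand sides of (i) and (ii) respectively. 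This is a useful consistency check before constructing the maps.

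For (i) I would invoke the standard fact that $\SL_2(\F_q)$ acts transitively on $\F_q^2\smallsetminus\{(0,0)\}$ by left multiplication on column vectors, with the stabilizer of $e_1=\left(\begin{smallmatrix}1\\0\end{smallmatrix}\right)$ equal to $U$; hence $gU\mapsto ge_1$ (the first column of $g$) gives an $\SL_2(\F_q)$-equivariant bijection $\SL_2(\F_q)/U\xrightarrow{\sim}\F_q^2\smallsetminus\{(0,0)\}$. The preimage of $H_1$ under the surjection $\SL_2(\F_q)\to\PSL_2(\F_q)$ is $U\cdot\{\pm I\}$, and $\{\pm I\}$ acts on $\F_q^2\smallsetminus\{(0,0)\}$ by $v\mapsto -v$; so passing to the quotient gives $\PSL_2(\F_q)/H_1\cong(\F_q^2\smallsetminus\{(0,0)\})/\langle\pm1\rangle$ (the quotient by $\langle\pm1\rangle$ doing nothing when $q$ is even, where $-I=I$). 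Unwinding the bijection: the class of $(x,z)$ corresponds to the coset of any $\left(\begin{smallmatrix}x&y\\z&w\end{smallmatrix}\right)\in\SL_2(\F_q)$ with that first column, the freedom in $(y,w)$ subject to $xw-yz=1$ being exactly right multiplication by $U$ and the sign ambiguity being $-I$ --- precisely the formula in the statement.

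For (ii) with $q$ odd (the case $q$ even collapses to (i), since then $\PSL_2(\F_q)=\PGL_2(\F_q)$ and both extra factors are trivial), I would use that $\PSL_2(\F_q)$ has index $2$ in $\PGL_2(\F_q)$, the quotient being detected by the determinant modulo $\F_q^{\times2}$. Fixing a non-square $\mu\in\F_q^\times$, the class $\bar g$ of $\mathrm{diag}(\mu,1)$ lies outside $\PSL_2(\F_q)$ (its determinant is a non-square) and normalizes $H_1$, since $\mathrm{diag}(\mu,1)$ conjugates $U$ to itself, scaling $b\mapsto\mu b$. Hence $\PGL_2(\F_q)=\PSL_2(\F_q)\sqcup\bar g\,\PSL_2(\F_q)$ yields $\PGL_2(\F_q)/H_1=\bigl(\PSL_2(\F_q)/H_1\bigr)\sqcup\bar g\bigl(\PSL_2(\F_q)/H_1\bigr)$, two copies of the set from (i), distinguished by whether a (equivalently, any) representative matrix has square or non-square determinant. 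Identifying $\{1,\bar g\}$ with $\F_q^\times/\F_q^{\times2}$ and combining with (i) gives the displayed product; the explicit representatives are read off exactly as in (i), now choosing in each sheet a matrix of determinant $1$ or $\mu$ with the prescribed entries (the data $u\in\F_q^\times$ recording only its square class).

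The argument is routine rather than deep; the one place requiring care is (ii), where one must verify that the chosen coset representative $\bar g$ of $\PSL_2(\F_q)$ in $\PGL_2(\F_q)$ genuinely normalizes $H_1$ (so that the two $\PSL_2(\F_q)$-sheets really are isomorphic copies of $\PSL_2(\F_q)/H_1$) and that the determinant-mod-squares label lines up with the explicit matrix representatives. I expect this bookkeeping, not any conceptual point, to be the main thing to pin down --- and it is exactly what the subsequent lemmas describing the action of $\pi_\frakp(\delta_s)$ on $G/H_1$ will rely on.
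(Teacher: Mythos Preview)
Your argument is correct. For part (i) it is essentially the paper's proof, just packaged in orbit--stabilizer language: the paper computes directly that two matrices in $\SL_2(\F_q)$ lie in the same $H_1$-coset if and only if their first columns agree up to sign, then concludes by a cardinality count, which is exactly your stabilizer computation unwound. For part (ii) you take a genuinely different (and somewhat cleaner) route: the paper defines the map $((x,z),u)\mapsto\text{coset}$ directly and verifies well-definedness by an explicit case analysis (exhibiting the unipotent element linking two choices of $(y,w)$, separately for $x\neq 0$ and $x=0$), whereas you use the structural decomposition $\PGL_2(\F_q)=\PSL_2(\F_q)\sqcup\bar g\,\PSL_2(\F_q)$ together with the observation that $\bar g$ normalizes $H_1$, reducing (ii) to two copies of (i). Your approach avoids the case split and makes the product structure $(\ldots)\times\F_q^\times/\F_q^{\times2}$ transparent; the paper's approach has the minor advantage of producing the explicit coset representatives directly, without passing through the auxiliary element $\bar g$. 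Either way the content is routine.
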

\begin{proof}
    Let $G=\PSL_2(\F_q)$ with $q$ odd. 
    Because $\# H_1= \#\F_q$, we have that $[G:H_1]=(q^2-1)/2.$
    The coset representatives of $G/H_1$ can be parameterized by $(x,z)\in (\F_q\times \F_q)/\langle \pm 1\rangle$.  Indeed, two elements in $\PSL_2(\F_q)$ are in the same coset of $G/H_1$ if and only if there is $\alpha\in\F_q$ such that
    $$\pm\begin{pmatrix}x&y\\z&w\end{pmatrix} \begin{pmatrix}1&\alpha\\0&1\end{pmatrix}=\pm\begin{pmatrix} x&x\alpha+y\\z&z\alpha+w\end{pmatrix}=\begin{pmatrix}x'&y'\\z'&w'\end{pmatrix},$$
    which is the case if and only if $(x,z)=\pm(x',z').$ Thus, the map $(\F_q\times \F_q)/\langle \pm 1\rangle\to G/H_1$ defined by the parametrization is a well-defined, injective homomorphism.  By a cardinality comparison it follows that it is an isomorphism.
    
    Now we let $G=\PGL_2(\F_q)$, 
    so $[G:H_1]=q^2-1$. We claim that the quotient $G/H_1$ is isomorphic to $(\F_q\times\F_q\setminus\{(0,0)\})/\{\pm1\}\times\F_q^\times/\F_q^{\times2}$. To present this isomorphism, we fix a non-square $\mu\in\F_q^\times \smallsetminus \F_q^{\times 2}$. For any $\pm(x,z)\in (\F_q\times\F_q\setminus\{(0,0)\})/\langle\pm1\rangle$, and any $u\in\{1,\mu\}\simeq\F_q^\times/\F_q^{\times2}$, we choose values of $y,w\in\F_q$ such that $xw-yz=u$ and map $\pm(x,z)$ to the class of the matrix $\begin{pmatrix}x&y\\z&w\end{pmatrix}$ in $\PGL_2(\F_q)$. Given two different choices $y,w\in\F_q$ and $y',w'\in\F_q$, if $x\neq 0$, then
    $$\pm\begin{pmatrix}x&y\\z&w\end{pmatrix}=\begin{pmatrix}x&y'\\z&w'\end{pmatrix}\begin{pmatrix}1&x^{-1}(y-y')\\0&1\end{pmatrix}.$$
    If $x=0$, then $z\neq0$ and $0\neq u=yz=y'z$. Thus, $y=y'$. Also, 
    $$\pm\begin{pmatrix}0&y\\z&w\end{pmatrix}=\begin{pmatrix}0&y\\z&w'\end{pmatrix}\begin{pmatrix}1&z^{-1}(w-w')\\0&1\end{pmatrix}.$$
    Thus, the map $(\F_q\times\F_q\setminus\{(0,0)\})/\{\pm1\}\times\F_q^\times/\F_q^{\times2}\to G/H_1$ is a well defined homomorphism. In addition, multiplication by elements in $H_1$ does not change the square class of the determinant or the first column of the matrix, so the homomorphism described above is injective. Since the cardinalities of the domain and range are equal, we conclude that this is an isomorphism.
\end{proof}

We proceed to describe the ramification of the cover $X_1(a,b,c;\frakp)\to\P^1$.  This lemma is similar to \Cref{lem:cyclesDescription}.  The main difference is that in certain cases there are more fixed points than strictly necessary.

\begin{lemma}\label{lem:cycleDecompositionX1}  
    Let $\overline{\sigma}_s\in G=\PXL_2(\F_q)$ and assume that the order of $\overline{\sigma}_s$ is $s$. The structure of the action of $\overline{\sigma}_s$ on $G/H_1$ is as follows:
    \begin{enumroman}
        \item if $\sigma_s$ is semisimple, then there are (no fixed points and) $\frac{[G:H_1]}{s}$ orbits of length $s$,
        \item if $\sigma_s$ is unipotent, then:
        \begin{enumalphii}
            \item if $G=\PSL_2(\F_q)$ and $q$ is odd, there are $(q-1)/2$ fixed points and $(q^2-q)/(2p)$ orbits of length $p$,
            \item otherwise, there are $q-1$ fixed points and $(q^2-q)/p$ orbits of length $p$.
        \end{enumalphii}
    \end{enumroman}
\end{lemma}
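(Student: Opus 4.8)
The plan is to analyze the action of $\overline{\sigma}_s$ on $G/H_1$ by lifting $\overline{\sigma}_s$ to a matrix $\sigma_s \in \GL_2(\F_q)$ and using the explicit parametrizations of $G/H_1$ from \Cref{lem:quotientByH1}. The action of $\overline{\sigma}_s$ on $G/H_1$ is by left multiplication, which under the parametrization of part (i) becomes the linear action of $\sigma_s$ on $(\F_q \times \F_q \smallsetminus \{(0,0)\})/\langle \pm 1\rangle$ (reading off the first column $(x,z)^t$), and under part (ii) the linear action on the first column together with the (trivial, since $\det$ is fixed up to squares) action on the square-class factor $\F_q^\times/\F_q^{\times 2}$. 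So in every case the problem reduces to counting orbits of $\sigma_s$ acting on $\F_q^2 \smallsetminus \{0\}$ (or its quotient by $\pm 1$, depending on $G$), and comparing the orbit lengths there with the order $s$ of $\overline{\sigma}_s$ in $G$ — noting that the order of $\sigma_s$ in $\GL_2(\F_q)$ may differ from $s$ by a factor dividing the order of the relevant scalar group.

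First I would treat the semisimple case. If $\sigma_s$ is split semisimple, conjugate it to $\mathrm{diag}(u,v)$ with $u \neq v$; a nonzero vector $(x,z)^t$ is fixed by $\sigma_s^m$ (up to scalar, if $G = \PGL_2$, or up to $\pm 1$, if $G = \PSL_2$) exactly when $(u/v)^m$ lies in the scalar group, and chasing this through shows every orbit on $\F_q^2 \smallsetminus \{0\}$ modulo the scalars has length exactly $s = \mathrm{ord}(\overline{\sigma}_s)$; since there are no vectors fixed outright by a nonscalar element of this form, there are no fixed points and $[G:H_1]/s$ orbits. The non-split semisimple case is identical after passing to $\F_{q^2}$, as in the proof of \Cref{lem:cyclesDescription}. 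The key point distinguishing this from \Cref{lem:cyclesDescription} is that $H_1$ is a point stabilizer for the action on lines but not on nonzero vectors, so the "two fixed points" of the split case on $\PP^1$ correspond to genuine $s$-orbits on vectors, which is why no extra fixed points appear.

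Next is the unipotent case, where the extra fixed points arise. Here $s = p$ and $\sigma_s$ is conjugate to $\begin{pmatrix} \lambda & \lambda u \\ 0 & \lambda \end{pmatrix}$ for a scalar $\lambda$ and $u \neq 0$; the fixed vectors of $\sigma_s$ in $\F_q^2$ are exactly the scalar multiples of $(1,0)^t$, a line's worth, i.e. $q-1$ nonzero vectors, which map to fixed points of the action on $G/H_1$ — this is where the $q-1$ (resp. $(q-1)/2$ after quotienting by $\pm 1$ when $G = \PSL_2(\F_q)$ with $q$ odd) fixed points come from. The remaining $q^2 - q$ nonzero vectors (resp. $(q^2-q)/2$ classes) split into orbits of length $p$ under $\sigma_s$, giving $(q^2-q)/p$ (resp. $(q^2-q)/(2p)$) orbits. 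The main obstacle I anticipate is bookkeeping the $\langle \pm 1 \rangle$-quotient carefully in the $\PSL_2$, $q$ odd subcase — one must check that $\pm 1$ does not identify a fixed vector with a non-fixed one and does not merge two distinct $p$-orbits (it cannot, since $-1$ is central and commutes with $\sigma_s$), and also that $-1$ acts freely on $\F_q^2 \smallsetminus \{0\}$ when $q$ is odd, so the counts halve cleanly; these are routine but need to be stated. The cardinality bookkeeping then matches $[G:H_1] = (q^2-1)/2$ or $q^2-1$ from \Cref{lem:quotientByH1}, confirming the count.
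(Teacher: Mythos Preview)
Your proposal is correct and follows essentially the same approach as the paper: reduce via \Cref{lem:quotientByH1} to the linear action on first columns modulo $\pm 1$, then do the split/non-split/unipotent case analysis exactly as in \Cref{lem:cyclesDescription}.

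One small imprecision to fix: your parenthetical claim that the action on the square-class factor $\F_q^\times/\F_q^{\times 2}$ is trivial is not always true---if $\overline{\sigma}_s \in \PGL_2(\F_q) \smallsetminus \PSL_2(\F_q)$ (equivalently, every lift $\sigma_s$ has non-square determinant), then left multiplication swaps the two square classes. This does not affect the conclusion in the semisimple case (all orbits still have length $s$, as one sees most cleanly by noting that $\overline{\sigma}_s^k$ fixes $gH_1$ iff $g^{-1}\sigma_s^k g$ is scalar times unipotent, hence scalar), and in the unipotent case $\sigma_s$ has determinant $1$ so the issue does not arise; but you should not assert triviality outright. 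The paper's proof is at the same level of detail and likewise does not dwell on this point.
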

 
\begin{proof}
    We use the description of the quotient $G/H_1$ given in \Cref{lem:quotientByH1}.  Let $\sigma_s$ be any element of $\GL_2(\F_q)$ that maps to $\overline{\sigma}_s$ in the quotient to $G$.
    
    If $\sigma_s$ is split semisimple, then it is conjugate over $\F_q$ to a diagonal matrix with entries $u,v$. Because the order of $\overline{\sigma}_s$ is $s$, then $s$ is the order of $uv^{-1}$.  We pick a class in the quotient $G/H_1$ represented by a matrix $M$.  If the class of $M$ is fixed by the action of $\overline{\sigma}_s$, then the first column of $M$ is, up to multiplication by $\pm1$, fixed by multiplication by the diagonal matrix. This implies that $(u,v)=\pm(1,1)$, contradicting that $s\ge2$. Thus, there are no fixed points of the action of $\overline{\sigma}_s$ on $G/H_1$. A similar argument shows that orbits of elements that are not fixed cannot have length less than $s$. Thus, every element belongs to an orbit of length $s$.
    
    If $\sigma_s$ is non-split semisimple, then $\sigma_s$ is split in a quadratic extension of $\F_q$. We assume that $\sigma_s=T^{-1}[\alpha_1,\alpha_2]T$ in this extension. If $\sigma_s^r$ fixes an element for $r\ge1$, then we have
    $$\pm \begin{pmatrix}\alpha_1^r&0\\0&\alpha_2^{r}\end{pmatrix}T\begin{pmatrix}x&y\\z&w\end{pmatrix}=T\begin{pmatrix}x&y'\\z&w'\end{pmatrix}.$$ 
    Multiplication by $T$ does not change the equality in $G/H_1$. Thus, we are back to the split semisimple case and the orbits of the action of $\overline{\sigma}_s$ all have size $s$.
    
    If $\sigma_s$ is unipotent, then $\sigma_s$ can be chosen (by multiplying by scalar matrices) to be conjugate to an upper diagonal matrix with ones in the diagonal. Then,
    $$\begin{pmatrix}1&u\\0&1\end{pmatrix}\begin{pmatrix}x&y\\z&w\end{pmatrix}=\begin{pmatrix}x+uz&y+uw\\z&w\end{pmatrix},$$
    so the class of this matrix in $G/H_1$ is fixed by multiplication by $\sigma_s$ if and only if $uz=0$. Since $s\ge2$, then $z$ must be $0$. We note that if $z\neq0$, then the orbit of the element has length $p$. In $G=\PSL_2(\F_q)$ there are $(q-1)/2$ representatives for which $z=0$, i.e. fixed points. Similarly, if $G=\PGL_2(\F_q)$, then there are $q-1$ fixed points.
\end{proof}

\begin{corollary}\label{cor:genusX1}
Let $(a,b,c)\in\Z_{\ge2}^3$ be a $q$-admissible hyperbolic triple. Let $\frakp$ be a prime ideal of $E(a,b,c)$ above a rational prime $p$. Then the genus of $X_1(a,b,c;\frakp)$ is given by
\begin{equation*}
    g(X_1(a,b,c;\frakp))=-[G:H_1]+1+\frac{1}{2}\sum_{s\in\{a,b,c\}}k_s(s-1),
\end{equation*}
where
\begin{equation*}
    k_s=\begin{cases} 
    (q^2-q)/(2p), & \text{ if $s = p$ and $G=\PSL_2(\F_q)$};\\
    (q^2-q)/p, & \text{ if $s=p$ and $G=\PGL_2(\F_q)$};\\
    (q^2-1)/s, & \text{ if $s\ne p$  and $G=\PGL_2(\F_q)$}; and\\
    (q^2-1)/(2s), & \text{ if $s\ne p$ and $q$ is odd and $G=\PSL_2(\F_q)$}.
    \end{cases}
\end{equation*}
\end{corollary}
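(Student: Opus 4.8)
The plan is to apply the Riemann--Hurwitz formula to the Belyi cover $\varphi\colon X_1(a,b,c;\frakp)\to X(1)\simeq\PP^1$, which is unramified outside $\{0,1,\infty\}$, reading off its degree from \Cref{lem:quotientByH1} and its ramification from \Cref{lem:cycleDecompositionX1}. For an admissible prime $\frakp$, \Cref{the:ClarkVoight} gives $G=G_\frakp=\PXL_2(\F_q)$, with $\PXL_2=\PSL_2$ or $\PGL_2$ according to the splitting of $\frakp$ in $F\supseteq E$; the elements $\overline{\sigma}_s\colonequals\pi_\frakp(\delta_s)$ have order exactly $s$ for $s=a,b,c$ and satisfy $\overline{\sigma}_a\overline{\sigma}_b\overline{\sigma}_c=1$, and the monodromy of $\varphi$ over $0,1,\infty$ is the action of $\overline{\sigma}_a,\overline{\sigma}_b,\overline{\sigma}_c$ on the coset space $G/H_1$ (just as in the $X_0$ case of \cref{sec:X0}).

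First I would record the degree $d\colonequals[G:H_1]=\#(G/H_1)$: since $H_1$ is the upper unipotent subgroup, $\#H_1=q$, so $d=(q^2-1)/2$ when $G=\PSL_2(\F_q)$ with $q$ odd and $d=q^2-1$ otherwise, exactly as established in the proof of \Cref{lem:quotientByH1}. Next, for each $s\in\{a,b,c\}$ I would determine the cycle type of $\overline{\sigma}_s$ on $G/H_1$. Because the order $s$ divides one of $q-1$, $p$, $q+1$ (as in the proof of \Cref{lem:cyclesDescription}), either $s=p$, which forces $\overline{\sigma}_s$ to be unipotent, or $s\neq p$, which forces $\overline{\sigma}_s$ to be semisimple (split or non-split). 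Applying \Cref{lem:cycleDecompositionX1}: in the semisimple case there are no fixed points and $d/s$ orbits of length $s$, so the ramification contribution is $\epsilon_s=(d/s)(s-1)=k_s(s-1)$ with $k_s=d/s$; in the unipotent case ($s=p$) there are $(q-1)/2$ or $q-1$ fixed points together with $k_s$ orbits of length $p$, where $k_s=(q^2-q)/(2p)$ if $G=\PSL_2(\F_q)$ (necessarily $q$ odd) and $k_s=(q^2-q)/p$ if $G=\PGL_2(\F_q)$, so again $\epsilon_s=k_s(p-1)=k_s(s-1)$. One then checks that $d/s$ equals $(q^2-1)/(2s)$ or $(q^2-1)/s$ in the two cases for $G$, matching the stated values of $k_s$ for $s\neq p$.

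Finally, Riemann--Hurwitz gives
\[ 2g(X_1(a,b,c;\frakp))-2=-2d+\sum_{s\in\{a,b,c\}}\epsilon_s=-2[G:H_1]+\sum_{s\in\{a,b,c\}}k_s(s-1), \]
which rearranges to the asserted formula. The computation is routine once the two lemmas are in hand; the one point deserving care — and the reason there is no analogue here of the correction term $\epsilon(a,b,c;\frakp)$ from \Cref{the:genusX0} — is that for $X_1$ the split and non-split semisimple cases of \Cref{lem:cycleDecompositionX1}(i) produce the same orbit structure, so even the a priori ambiguous case $s=2$ with $q$ odd causes no trouble. The only genuine case split is between the two unipotent sub-cases \Cref{lem:cycleDecompositionX1}(ii)(a) and (b), governed solely by whether $G=\PSL_2(\F_q)$ or $\PGL_2(\F_q)$, and this is exactly what produces the first two rows of the table for $k_s$.
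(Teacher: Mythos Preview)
Your proposal is correct and follows exactly the paper's approach: the paper's proof is a single sentence invoking Riemann--Hurwitz on $X_1(\frakp)\to\PP^1$ together with \Cref{lem:cycleDecompositionX1}, and you have simply spelled out the details of that computation. Your observation explaining why no correction term analogous to $\epsilon(a,b,c;\frakp)$ from \Cref{the:genusX0} arises here (because \Cref{lem:cycleDecompositionX1}(i) treats split and non-split semisimple uniformly) is a helpful remark not made explicit in the paper.
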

\begin{proof}
This formula is given by using the Riemann-Hurwitz formula on $X_1(\frakp)\to \P^1$ and \Cref{lem:cycleDecompositionX1}.
\end{proof}

Now we are ready to present an algorithm that enumerates all curves $X_1(a,b,c;\frakp)$.
\begin{algorithm} \label{alg:lowgenusX1}
    Returns a list \verb|lowGenusX1|  of all hyperbolic triples $(a,b,c)$ and admissible ideals $\frakp$ such that the genus of $X_1(a,b,c;\frakp)$ is $g\le g_0$.
    \begin{enumalg}
        \item Loop over all hyperbolic triples $(a,b,c)$ and prime ideals $\frakp$ such that $X_0(a,b,c;\frakp)$ has genus bounded above by $g_0$.  This list can be obtained from \Cref{alg:enumerationX0}.
        \item For each triple $(a,b,c)$ and ideal $\frakp$ of the previous step, compute the genus $g$ of $X_1(a,b,c;\frakp)$ with \Cref{cor:genusX1}. If $g\le g_0$, then add $(a,b,c;\frakp)$ to the list \verb|lowGenusX1|.
    \end{enumalg}
\end{algorithm}
\begin{proof}[Proof of correctness]
    For all triples $(a,b,c)$ and prime ideals $\frakp$ there are maps $X_1(a,b,c;\frakp)\to X_0(a,b,c;\frakp)$. Thus, the only curves $X_1(a,b,c;\frakp)$ that can have genus $g\le g_0$ must be covering curves $X_0(a,b,c;\frakp)$ of genus bounded above by $g_0$.
\end{proof}

\subsection*{Proof of theorem}

We conclude the paper by proving our main result.

\begin{proof}[Proof of \textup{\Cref{thm:mainthm}}]
By \Cref{cor:X0ListBounded}, there are only finitely many curves $X_0(a,b,c;\frakp)$ with nontrivial admissible prime level $\frakp$ and genus $g \leq g_0$.  Since every curve $X_1(a,b,c;\frakp)$ covers $X_0(a,b,c;\frakp)$, the same is true for $X_1(a,b,c;\frakp)$ (see \Cref{cor:finiteX1}).

For the computation, we run \Cref{alg:enumerationX0} with $g_0=2$, adding extra cases according to \Cref{prop:otherCasesX0}.  To finish, we run \Cref{alg:lowgenusX1}.  The implementation of this computation can be found in our \textsf{Magma} code \cite{codeTMC}.
\end{proof}

\vfill\newpage
\appendix

\section{Tables} \label{app:tables}
We present tables of all hyperbolic triples $(a,b,c)$ and admissible primes $\frakp$ such that the curve $X_0(a,b,c;\frakp)$ has genus 0 or 1.  The list with additional data is available online \cite{codeTMC}.

To record $\frakp$, we list the prime number $p$ below $\frakp$. 
We describe the group $G=\PXL_2(\F_q)$ by presenting $q$ and writing $1$ in the $\PXL$ field if $G=\PSL_2(\F_q)$ and $-1$ if $G=\PGL_2(\F_q)$.  We also record the information about the field $E(a,b,c)$ and the number of different prime ideals of $E$ above $p$.

Nugent--Voight \cite{NugentVoight} define an invariant, the \defi{arithmetic dimension} $\adim(a,b,c)$, to be the dimension of a quaternionic Shimura variety attached to $\Delta(a,b,c)$ given by the number of split real places of $E(a,b,c)$ of the quaternion algebra $A = E\langle \Delta^{(2)} \rangle$.  In particular, the triangle group $\Delta(a,b,c)$ is arithmetic if and only if $\adim(a,b,c)=1$.  

One subtlety is that there can be an isomorphism between the cover coming from a nonarithmetic group and the cover coming from an arithmetic group.  This can only happen when the arithmetic group is of noncompact type, with 
\begin{align*} 
(a,b,c)&=(2,3,\infty),(2,4,\infty),(2,6,\infty),(2,\infty,\infty),(3,3,\infty),(3,\infty,\infty), \\
&\qquad (4,4,\infty),(6,6,\infty),(\infty,\infty,\infty)
\end{align*}
by Takeuchi \cite{TakeuchiCommesurability}.  All of these arise from finite-index subgroups of $\PSL_2(\Z)$, so they are related to classical modular curves, and are defined over $\Q$.  The ramification of the curve $X_0(a,b,c;\frakp)$ for $a,b,c \in \Z_{\geq 2} \cup \{\infty\}$ replaces any occurrence of $\infty$ by $p$; this allows one to readily identify when this extra isomorphism applies.  We record this by adding $(1)$ to the arithmetic dimension entry on the table.

For the arithmetic triangle groups $\Delta(a,b,c)$ such that $\Delta\simeq\Lambda^1$, the corresponding list of curves is contained in \cite[Tables 4.1--4.7]{Voight}.  We confirmed that the intersection is in agreement.

Finally, for noncocompact triples see \Cref{prop:otherCasesX0}.

\newpage

\subsection*{Genus 0, \texorpdfstring{$X_0(a,b,c;\frakp)$}{X0(a,b,c;p)}}
\begin{footnotesize}
\begin{equation*}
\bgroup
\def\arraystretch{1.15}
\begin{array}{c|c||c|c||c|c|c}
    \bm{(a,b,c)}& \bm{p} &\bm{q}&\bm{{\rm PXL}} & \bm{{\rm adim}} &\bm{ E(a,b,c)}&\# \text{ of }\frakp
    \\\hline\hline(2, 3, 7) & 7 & 7 & 1 & 1 & \href{https://www.lmfdb.org/NumberField/3.3.49.1}{\Q(\lambda_7)}&1
    \\\hline(2, 3, 7) & 2 & 8 & 1 & 1 & \href{https://www.lmfdb.org/NumberField/3.3.49.1}{\Q(\lambda_7)}&1
    \\\hline(2, 3, 7) & 13& 13& 1& 1& \href{https://www.lmfdb.org/NumberField/3.3.49.1}{\Q(\lambda_7)}&3
    \\\hline(2, 3, 7) & 29& 29& 1& 1& \href{https://www.lmfdb.org/NumberField/3.3.49.1}{\Q(\lambda_7)}&3
    \\\hline(2, 3, 7) & 43& 43& 1& 1& \href{https://www.lmfdb.org/NumberField/3.3.49.1}{\Q(\lambda_7)}&3
    \\\hline(2, 3, 8) & 7& 7& -1& 1& \href{https://www.lmfdb.org/NumberField/2.2.8.1}{\Q(\sqrt{8})}&2
    \\\hline(2, 3, 8) & 3 & 9 & -1 & 1 &\href{https://www.lmfdb.org/NumberField/2.2.8.1}{\Q(\sqrt{8})}&1
    \\\hline(2, 3, 8) & 17& 17& 1& 1& \href{https://www.lmfdb.org/NumberField/2.2.8.1}{\Q(\sqrt{8})}&2
    \\\hline(2, 3, 8) & 5 & 25 & -1 & 1 &\href{https://www.lmfdb.org/NumberField/2.2.8.1}{\Q(\sqrt{8})}&1
    \\\hline(2, 3, 9) & 19& 19& 1& 1&\href{https://www.lmfdb.org/NumberField/3.3.81.1}{\Q(\lambda_9)}&3
    \\\hline(2, 3, 9) & 37& 37& 1& 1& \href{https://www.lmfdb.org/NumberField/3.3.81.1}{\Q(\lambda_9)}&3
    \\\hline(2, 3, 10) & 11& 11& -1& 1& \href{https://www.lmfdb.org/NumberField/2.2.5.1}{\Q(\sqrt{5})}&2
    \\\hline(2, 3, 10) & 31& 31& -1& 1& \href{https://www.lmfdb.org/NumberField/2.2.5.1}{\Q(\sqrt{5})}&2
    \\\hline(2, 3, 12) & 13 & 13 & -1 & 1 & \href{https://www.lmfdb.org/NumberField/2.2.12.1}{\Q(\sqrt{12})}&2
    \\\hline(2, 3, 12) & 5& 25& 1& 1& \href{https://www.lmfdb.org/NumberField/2.2.12.1}{\Q(\sqrt{12})}&1
    \\\hline(2, 3, 13) & 13 & 13 & 1 & 2\,(1) & \href{https://www.lmfdb.org/NumberField/6.6.371293.1}{\Q(\lambda_{13})}&1
    \\\hline(2, 3, 15) & 2 & 16 & 1 & 2\,(1) & \href{https://www.lmfdb.org/NumberField/4.4.1125.1}{\Q(\lambda_{15})}&1
    \\\hline(2, 3, 18) & 19& 19& -1& 1& \href{https://www.lmfdb.org/NumberField/3.3.81.1}{\Q(\lambda_9)}&3
    \\\hline(2, 4, 5) & 5 & 5 & -1 & 1 & \href{https://www.lmfdb.org/NumberField/2.2.5.1}{\Q(\sqrt{5})}&1
    \\\hline(2, 4, 5) & 3 & 9 & 1 & 1 & \href{https://www.lmfdb.org/NumberField/2.2.5.1}{\Q(\sqrt{5})}&1
    \\\hline(2, 4, 5) & 11& 11& -1& 1& \href{https://www.lmfdb.org/NumberField/2.2.5.1}{\Q(\sqrt{5})}&2
    \\\hline(2, 4, 5) & 41& 41& 1& 1& \href{https://www.lmfdb.org/NumberField/2.2.5.1}{\Q(\sqrt{5})}&2
    \\\hline(2, 4, 6) & 5& 5& -1& 1& \href{https://www.lmfdb.org/NumberField/1.1.1.1}{\Q}&1
    \\\hline(2, 4, 6) & 7& 7& -1& 1& \href{https://www.lmfdb.org/NumberField/1.1.1.1}{\Q}&1
    \\\hline(2, 4, 6) & 13 & 13 & -1 & 1 & \href{https://www.lmfdb.org/NumberField/1.1.1.1}{\Q}&1
    \\\hline(2, 4, 8) & 3 & 9 & -1 & 1 & \href{https://www.lmfdb.org/NumberField/2.2.8.1}{\Q(\sqrt{8})}&1
    \\\hline(2, 4, 8) & 17& 17& 1& 1& \href{https://www.lmfdb.org/NumberField/2.2.8.1}{\Q(\sqrt{8})}&2
    \\\hline(2, 4, 12) & 13& 13& -1& 1& \href{https://www.lmfdb.org/NumberField/2.2.12.1}{\Q(\sqrt{12})}&2
    \\\hline(2,5,5) & 5 & 5 & 1 & 1 & \href{https://www.lmfdb.org/NumberField/2.2.5.1}{\Q(\sqrt{5})} &1
    \\\hline(2,5,5) & 11 & 11 & 1 & 1 & \href{https://www.lmfdb.org/NumberField/2.2.5.1}{\Q(\sqrt{5})} &2
    \\\hline(2, 5, 10) & 11& 11& -1& 1& \href{https://www.lmfdb.org/NumberField/2.2.5.1}{\Q(\sqrt{5})}&2
    \\\hline(2, 6, 6) & 7 & 7 & -1 & 1 & \href{https://www.lmfdb.org/NumberField/1.1.1.1}{\Q}&1
    \\\hline(2, 6, 6) & 13& 13& 1& 1&\href{https://www.lmfdb.org/NumberField/1.1.1.1}{\Q}&1
    \\\hline(2, 6, 7) & 7 & 7 & -1 & 2\,(1)& \href{https://www.lmfdb.org/NumberField/3.3.49.1}{\Q(\lambda_7)}&1
    \\\hline(2, 8, 8) & 3& 9& -1& 1& \href{https://www.lmfdb.org/NumberField/2.2.8.1}{\Q(\sqrt{8})}&1
    \\\hline(3, 3, 4) & 7& 7& 1& 1& \href{https://www.lmfdb.org/NumberField/2.2.8.1}{\Q(\sqrt{8})}&2
    \\\hline(3, 3, 4) & 3 & 9 & 1 & 1 & \href{https://www.lmfdb.org/NumberField/2.2.8.1}{\Q(\sqrt{8})}&1
    \\\hline(3, 3, 4) & 5& 25&1& 1& \href{https://www.lmfdb.org/NumberField/2.2.8.1}{\Q(\sqrt{8})}&1
    \\\hline(3, 3, 5) & 2 & 4 & 1 & 1 & \href{https://www.lmfdb.org/NumberField/2.2.5.1}{\Q(\sqrt{5})}&1
    \\\hline(3, 3, 6) & 13& 13& 1& 1& \href{https://www.lmfdb.org/NumberField/2.2.12.1}{\Q(\sqrt{12})}&2
    \\\hline(3,4,4) & 5 & 5 & -1 & 1 & \href{https://www.lmfdb.org/NumberField/1.1.1.1}{\Q} &1
    \\\hline(3,4,4) & 13 & 13 & -1 & 1 & \href{https://www.lmfdb.org/NumberField/2.2.5.1}{\Q} &1
    \\\hline(3,6,6) & 7 & 7 & -1 & 1 & \href{https://www.lmfdb.org/NumberField/1.1.1.1}{\Q} &1
    \\\hline(4, 4, 4) & 3& 9& 1& 1& \href{https://www.lmfdb.org/NumberField/2.2.8.1}{\Q(\sqrt{8})}&1
    \end{array}
    \egroup
\end{equation*}
\end{footnotesize}

\subsection*{Genus 1, \texorpdfstring{$X_0(a,b,c;\frakp)$}{X0(a,b,c;p)}}
\begin{footnotesize}
This long table is split into three tables (over the next three pages).

\begin{equation*}
\bgroup
\def\arraystretch{1.15}
\begin{array}{c|c||c|c||c|c|c}
    \bm{(a,b,c)}& \bm{p} &\bm{q}&\bm{{\rm PXL}} &\bm{{\rm adim}}&\bm{ E}&\#\text{ of }\frakp
    \\\hline\hline(2, 3, 7) & 3 & 27 & 1 & 1 & \href{https://www.lmfdb.org/NumberField/3.3.49.1}{\Q(\lambda_7)}&1
    \\\hline(2, 3, 7) & 41 & 41 & 1 & 1 &  \href{https://www.lmfdb.org/NumberField/3.3.49.1}{\Q(\lambda_7)}&3
    \\\hline(2, 3, 7) & 71 & 71 & 1 & 1 & \href{https://www.lmfdb.org/NumberField/3.3.49.1}{\Q(\lambda_7)}&3
    \\\hline(2, 3, 7) & 97 & 97 & 1 & 1 & \href{https://www.lmfdb.org/NumberField/3.3.49.1}{\Q(\lambda_7)}&3
    \\\hline(2, 3, 7) & 113 & 113 & 1 & 1 & \href{https://www.lmfdb.org/NumberField/3.3.49.1}{\Q(\lambda_7)}&3
    \\\hline(2, 3, 7) & 127 & 127 & 1 & 1 & \href{https://www.lmfdb.org/NumberField/3.3.49.1}{\Q(\lambda_7)}&3
    \\\hline(2, 3, 8) & 23 & 23 & -1 & 1 & \href{https://www.lmfdb.org/NumberField/2.2.8.1}{\Q(\sqrt{8})}&2
    \\\hline(2, 3, 8) & 31 & 31 & 1 & 1 &\href{https://www.lmfdb.org/NumberField/2.2.8.1}{\Q(\sqrt{8})}&2
    \\\hline(2, 3, 8) & 41 & 41 & -1 & 1 & \href{https://www.lmfdb.org/NumberField/2.2.8.1}{\Q(\sqrt{8})}&2
    \\\hline(2, 3, 8) & 73 & 73 & -1 & 1 & \href{https://www.lmfdb.org/NumberField/2.2.8.1}{\Q(\sqrt{8})}&2
    \\\hline(2, 3, 8) & 97 & 97 & 1 & 1 & \href{https://www.lmfdb.org/NumberField/2.2.8.1}{\Q(\sqrt{8})}&2
    \\\hline(2, 3, 9) & 2 & 8 & 1 & 1 &\href{https://www.lmfdb.org/NumberField/3.3.81.1}{\Q(\lambda_9)}&1
    \\\hline(2, 3, 9) & 17 & 17 & 1 & 1 & \href{https://www.lmfdb.org/NumberField/3.3.81.1}{\Q(\lambda_9)}&3
    \\\hline(2, 3, 9) & 73 & 73 & 1 & 1 & \href{https://www.lmfdb.org/NumberField/3.3.81.1}{\Q(\lambda_9)}&3
    \\\hline(2, 3, 10) & 3 & 9 & -1 & 1&\href{https://www.lmfdb.org/NumberField/2.2.5.1}{\Q(\sqrt{5})}&1
    \\\hline(2, 3, 10) & 19 & 19 & 1 & 1 & \href{https://www.lmfdb.org/NumberField/2.2.5.1}{\Q(\sqrt{5})}&2
    \\\hline(2, 3, 10) & 41 & 41 & 1 & 1 & \href{https://www.lmfdb.org/NumberField/2.2.5.1}{\Q(\sqrt{5})}&2
    \\\hline(2, 3, 10) & 61 & 61 & 1 & 1 & \href{https://www.lmfdb.org/NumberField/2.2.5.1}{\Q(\sqrt{5})}&2
    \\\hline(2, 3, 11) & 11 & 11 & 1 & 1 & \href{https://www.lmfdb.org/NumberField/5.5.14641.1}{\Q(\lambda_5)}&1
    \\\hline(2, 3, 11) & 23 & 23 & 1 & 1 &\href{https://www.lmfdb.org/NumberField/5.5.14641.1}{\Q(\lambda_5)}&5
    \\\hline(2, 3, 12) & 11 & 11 & -1 & 1 &\href{https://www.lmfdb.org/NumberField/2.2.12.1}{\Q(\sqrt{12})}&2
    \\\hline(2, 3, 12) & 37 & 37 & -1 & 1 & \href{https://www.lmfdb.org/NumberField/2.2.12.1}{\Q(\sqrt{12})}&2
    \\\hline(2, 3, 12) & 7 & 49 & 1 & 1 & \href{https://www.lmfdb.org/NumberField/2.2.12.1}{\Q(\sqrt{12})}&1
    \\\hline(2, 3, 13) & 5 & 25 & 1 & 2 & \href{https://www.lmfdb.org/NumberField/6.6.371293.1}{\Q(\lambda_{13})}&3
    \\\hline(2, 3, 13) & 3 & 27 & 1 & 2 & \href{https://www.lmfdb.org/NumberField/6.6.371293.1}{\Q(\lambda_{13})}&2
    \\\hline(2, 3, 14) & 13 & 13 & -1 & 1 & \href{https://www.lmfdb.org/NumberField/3.3.49.1}{\Q(\lambda_7)}&3
    \\\hline(2, 3, 14) & 29 & 29 & 1 & 1 &\href{https://www.lmfdb.org/NumberField/3.3.49.1}{\Q(\lambda_7)}&3
    \\\hline(2, 3, 14) & 43 & 43 & -1 & 1 & \href{https://www.lmfdb.org/NumberField/3.3.49.1}{\Q(\lambda_7)}&3
    \\\hline(2, 3, 15) & 31 & 31 & 1 & 2 &\href{https://www.lmfdb.org/NumberField/4.4.1125.1}{\Q(\lambda_{15})}&4
    \\\hline(2, 3, 16) & 17 & 17 & -1 & 1 & \href{https://www.lmfdb.org/NumberField/4.4.2048.1}{\Q(\lambda_{16})}&4
    \\\hline(2, 3, 17) & 2 & 16 & 1 & 2 & \href{https://www.lmfdb.org/NumberField/8.8.410338673.1}{\Q(\lambda_{17})}&2
    \\\hline(2, 3, 17) & 17 & 17 & 1 & 2 &\href{https://www.lmfdb.org/NumberField/8.8.410338673.1}{\Q(\lambda_{17})}&1
    \\\hline(2, 3, 18) & 37 & 37 & 1 & 1 & \href{https://www.lmfdb.org/NumberField/3.3.81.1}{\Q(\lambda_9)}&3
    \\\hline(2, 3, 19) & 19 & 19 & 1 & 3\,(1)& \href{https://www.lmfdb.org/NumberField/9.9.16983563041.1}{\Q(\lambda_{19})}&1
    \\\hline(2, 3, 20) & 19 & 19 & -1 & 2 & \href{https://www.lmfdb.org/NumberField/4.4.2000.1}{\Q(\lambda_{20})}&4
    \\\hline(2, 3, 22) & 23 & 23 & -1 & 2 & \href{https://www.lmfdb.org/NumberField/5.5.14641.1}{\Q(\lambda_5)}&5
    \\\hline(2, 3, 24) & 5 & 25 & -1 & 1 & \href{https://www.lmfdb.org/NumberField/4.4.2304.1}{\Q(\sqrt{2},\sqrt{3})}&2
    \\\hline(2, 3, 26) & 3 & 27 & -1 & 2 & \href{https://www.lmfdb.org/NumberField/6.6.371293.1}{\Q(\lambda_{13})}&2
    \\\hline(2, 3, 30) & 31 & 31 & -1 & 1 & \href{https://www.lmfdb.org/NumberField/4.4.1125.1}{\Q(\lambda_{15})}&4
    \\
    \multicolumn{6}{c}{\vdots}
    \end{array}
    \egroup
    \\ 
    \end{equation*}
    
    \vfill\newpage
    
    \begin{equation*}
    \bgroup
\def\arraystretch{1.15}
\begin{array}{c|c||c|c||c|c|c}
    \bm{(a,b,c)}& \bm{p} &\bm{q}&\bm{{\rm PXL}} & \bm{{\rm adim}} &\bm{ E} &\bm{\#\text{ of }\frakp}
    \\\hline\hline(2, 4, 5) & 19 & 19 & -1 & 1& \href{https://www.lmfdb.org/NumberField/2.2.5.1}{\Q(\sqrt{5})}&2
    \\\hline(2, 4, 5) & 29 & 29 & -1 & 1 & \href{https://www.lmfdb.org/NumberField/2.2.5.1}{\Q(\sqrt{5})}&2
    \\\hline(2, 4, 5) & 31 & 31 & 1 & 1 &\href{https://www.lmfdb.org/NumberField/2.2.5.1}{\Q(\sqrt{5})}&2
    \\\hline(2, 4, 5) & 7 & 49 & 1 & 1 &\href{https://www.lmfdb.org/NumberField/2.2.5.1}{\Q(\sqrt{5})}&1
    \\\hline(2, 4, 5) & 61 & 61 & -1 & 1 & \href{https://www.lmfdb.org/NumberField/2.2.5.1}{\Q(\sqrt{5})}&2
    \\\hline(2, 4, 6) & 11 & 11 & -1 & 1 & \href{https://www.lmfdb.org/NumberField/1.1.1.1}{\Q}&1
    \\\hline(2, 4, 6) & 17 & 17 & -1 & 1 & \href{https://www.lmfdb.org/NumberField/1.1.1.1}{\Q}&1
    \\\hline(2, 4, 6) & 19 & 19 & -1 & 1 & \href{https://www.lmfdb.org/NumberField/1.1.1.1}{\Q}&1
    \\\hline(2, 4, 6) & 29 & 29 & -1 & 1 & \href{https://www.lmfdb.org/NumberField/1.1.1.1}{\Q}&1
    \\\hline(2, 4, 6) & 31 & 31 & -1 & 1 & \href{https://www.lmfdb.org/NumberField/1.1.1.1}{\Q}&1
    \\\hline (2, 4, 6) & 37 & 37 & -1 & 1 & \href{https://www.lmfdb.org/NumberField/1.1.1.1}{\Q}&1
    \\\hline (2, 4, 7) & 7 & 7 & 1 & 1 & \href{https://www.lmfdb.org/NumberField/3.3.49.1}{\Q(\lambda_7)}&1
    \\\hline(2, 4, 7) & 13 & 13 & -1 & 1 & \href{https://www.lmfdb.org/NumberField/3.3.49.1}{\Q(\lambda_7)}&3
    \\\hline(2, 4, 7) & 29 & 29 & -1 & 1 &\href{https://www.lmfdb.org/NumberField/3.3.49.1}{\Q(\lambda_7)}&3
    \\\hline(2, 4, 8) & 7 & 7 & -1 & 1 & \href{https://www.lmfdb.org/NumberField/2.2.8.1}{\Q(\sqrt{8})}&2
    \\\hline(2, 4, 8) & 5 & 25 & -1 & 1 &\href{https://www.lmfdb.org/NumberField/2.2.8.1}{\Q(\sqrt{8})}&1
    \\\hline(2, 4, 9) & 17 & 17 & 1 & 2 & \href{https://www.lmfdb.org/NumberField/3.3.81.1}{\Q(\lambda_9)}&3
    \\\hline(2, 4, 9) & 19 & 19 & -1 & 2 & \href{https://www.lmfdb.org/NumberField/3.3.81.1}{\Q(\lambda_9)}&3
    \\\hline(2, 4, 10) & 3 & 9 & -1 & 1 & \href{https://www.lmfdb.org/NumberField/2.2.5.1}{\Q(\sqrt{5})}&1
    \\\hline(2, 4, 10) & 11 & 11 & -1 & 1 & \href{https://www.lmfdb.org/NumberField/2.2.5.1}{\Q(\sqrt{5})}&2
    \\\hline(2, 4, 11) & 11 & 11 & -1 & 2\,(1) & \href{https://www.lmfdb.org/NumberField/5.5.14641.1}{\Q(\lambda_5)}&1
    \\\hline(2, 4, 12) & 5 & 25 & 1 & 1 & \href{https://www.lmfdb.org/NumberField/2.2.12.1}{\Q(\sqrt{12})}&1
    \\\hline(2, 4, 13) & 13 & 13 & -1 & 3\,(1) & \href{https://www.lmfdb.org/NumberField/6.6.371293.1}{\Q(\lambda_{13})}&1
    \\\hline(2, 4, 14) & 13 & 13 & -1 & 2 & \href{https://www.lmfdb.org/NumberField/3.3.49.1}{\Q(\lambda_7)}&3
    \\\hline(2, 4, 16) & 17 & 17 & -1 & 2 & \href{https://www.lmfdb.org/NumberField/4.4.2048.1}{\Q(\lambda_{16})}&4
    \\\hline(2, 4, 17) & 17 & 17 & 1 & 4\,(1) & \href{https://www.lmfdb.org/NumberField/8.8.410338673.1}{\Q(\lambda_{17})}&1
    \\\hline(2,5,5) & 3 & 9 & 1 & 1 & \href{https://www.lmfdb.org/NumberField/2.2.5.1}{\Q(\sqrt{5})} &1
    \\\hline(2,5,5) & 31 & 31 & 1 & 1 & \href{https://www.lmfdb.org/NumberField/2.2.5.1}{\Q(\sqrt{5})}&2
    \\\hline(2,5,5) & 41 & 41 & 1 & 1 & \href{https://www.lmfdb.org/NumberField/2.2.5.1}{\Q(\sqrt{5})} &2
    \\\hline(2, 5, 6) & 5 & 5 & -1 &1 & \href{https://www.lmfdb.org/NumberField/2.2.5.1}{\Q(\sqrt{5})}&1
    \\\hline(2, 5, 6) & 11 & 11 & 1 & 1 &\href{https://www.lmfdb.org/NumberField/2.2.5.1}{\Q(\sqrt{5})}&2
    \\\hline(2, 5, 6) & 19 & 19 & -1 & 1&\href{https://www.lmfdb.org/NumberField/2.2.5.1}{\Q(\sqrt{5})}&2
    \\\hline(2, 5, 6) & 31 & 31 & -1 & 1 &\href{https://www.lmfdb.org/NumberField/2.2.5.1}{\Q(\sqrt{5})}&2
    \\\hline(2, 5, 8) & 3 & 9 & -1 & 1 & \href{https://www.lmfdb.org/NumberField/4.4.1600.1}{\Q(\sqrt{2},\sqrt{5})}&2
    \\\hline(2, 5, 11) & 11 & 11 & 1 & 4 \,(2)& \href{https://www.lmfdb.org/NumberField/10.10.669871503125.1}{\Q(\sqrt{5},\lambda_{11})}&2
    \\\hline(2, 5, 12) & 11 & 11 & -1 & 2 & \href{https://www.lmfdb.org/NumberField/4.4.3600.1}{\Q(\sqrt{3},\sqrt{5})}&4
    \\\hline(2, 5, 15) & 2 & 16 & 1 & 2 & \href{https://www.lmfdb.org/NumberField/4.4.1125.1}{\Q(\lambda_{15})}&1
    \\\hline(2, 6, 6) & 5 & 5 & -1 & 1 & \href{https://www.lmfdb.org/NumberField/1.1.1.1}{\Q}&1
    \\\hline(2, 6, 6) & 19 & 19 & -1 & 1 & \href{https://www.lmfdb.org/NumberField/1.1.1.1}{\Q}&1
    \\\hline(2, 6, 7) & 13 & 13 & 1 & 2 & \href{https://www.lmfdb.org/NumberField/3.3.49.1}{\Q(\lambda_7)}&3
    \\\hline(2, 6, 8) & 7 & 7 & -1 & 1 & \href{https://www.lmfdb.org/NumberField/2.2.8.1}{\Q(\sqrt{8})}&2
    \\\hline(2, 6, 9) & 19 & 19 & -1 & 2 & \href{https://www.lmfdb.org/NumberField/3.3.81.1}{\Q(\lambda_9)}&3
    \\
    \multicolumn{6}{c}{\vdots}
        \end{array}
        \egroup
\end{equation*}

\vfill\newpage
    
\begin{equation*}
\bgroup
\def\arraystretch{1.15}
\begin{array}{c|c||c|c||c|c|c}
    \bm{(a,b,c)}& \bm{p} &\bm{q}&\bm{{\rm PXL}} & \bm{{\rm adim}} &\bm{ E}&\bm{\#\text{ of }\frakp}
    \\\hline\hline(2, 6, 10) & 11 & 11 & -1 & 2 &\href{https://www.lmfdb.org/NumberField/2.2.5.1}{\Q(\sqrt{5})}&2
    \\\hline(2, 6, 12) & 13 & 13 & -1 & 1 &\href{https://www.lmfdb.org/NumberField/2.2.12.1}{\Q(\sqrt{12})}&2
    \\\hline(2, 6, 13) & 13 & 13 & 1 & 4\,(1) & \href{https://www.lmfdb.org/NumberField/6.6.371293.1}{\Q(\lambda_{13})}&1
    \\\hline(2, 7, 7) & 7 & 7 & 1 & 1 &\href{https://www.lmfdb.org/NumberField/3.3.49.1}{\Q(\lambda_7)}&1
    \\\hline(2, 7, 8) & 7 & 7 & -1 & 2 & \href{https://www.lmfdb.org/NumberField/6.6.1229312.1}{\Q(\sqrt{2},\lambda_7)}&2
    \\\hline(2, 7, 9) & 2 & 8 & 1 & 3 & \href{https://www.lmfdb.org/NumberField/9.9.62523502209.1}{\Q(\lambda_7,\lambda_9)}&3
    \\\hline(2, 8, 8) & 17 & 17 & 1 & 1 &\href{https://www.lmfdb.org/NumberField/2.2.8.1}{\Q(\sqrt{8})}&2
    \\\hline(2, 8, 10) & 3 & 9 & -1 & 3 &\href{https://www.lmfdb.org/NumberField/4.4.1600.1}{\Q(\sqrt{2},\sqrt{5})}&2
    \\\hline(2, 10, 10) & 11 & 11 & -1 & 1 & \href{https://www.lmfdb.org/NumberField/2.2.5.1}{\Q(\sqrt{5})}&2
    \\\hline(2, 10, 11) & 11 & 11 & -1 & 6 \,(2)& \href{https://www.lmfdb.org/NumberField/10.10.669871503125.1}{\Q(\sqrt{5},\lambda_{11})}&2
    \\\hline(2, 12, 12) & 13 & 13 & -1 & 1 & \href{https://www.lmfdb.org/NumberField/2.2.12.1}{\Q(\sqrt{12})}&2  \\\hline(3, 3, 4) & 17 & 17 & 1 & 1 &\href{https://www.lmfdb.org/NumberField/2.2.8.1}{\Q(\sqrt{8})}&2
    \\\hline(3, 3, 4) & 31 & 31 & 1 & 1 &\href{https://www.lmfdb.org/NumberField/2.2.8.1}{\Q(\sqrt{8})}&2
    \\\hline(3,3,5)& 3 & 9 & 1& 1 & \href{https://www.lmfdb.org/NumberField/2.2.5.1}{\Q(\sqrt{5})}&1
    \\\hline(3,3,5) & 11 & 11 & 1 & 1 & 
    \href{https://www.lmfdb.org/NumberField/2.2.5.1}{\Q(\sqrt{5})} &2
    \\\hline(3,3,5) & 19 & 19 & 1 & 1 & 
    \href{https://www.lmfdb.org/NumberField/2.2.5.1}{\Q(\sqrt{5})} &2
    \\\hline(3,3,5) & 31 & 31 & 1 & 1 & 
    \href{https://www.lmfdb.org/NumberField/2.2.5.1}{\Q(\sqrt{5})} &2
    \\\hline(3, 3, 6) & 5 & 25 & 1 &1 & \href{https://www.lmfdb.org/NumberField/2.2.12.1}{\Q(\sqrt{12})}&1
    \\\hline(3, 3, 7) & 2 & 8 & 1 & 1 & \href{https://www.lmfdb.org/NumberField/3.3.49.1}{\Q(\lambda_7)}&1
    \\\hline(3, 3, 7) & 13 & 13 & 1 & 1 & \href{https://www.lmfdb.org/NumberField/3.3.49.1}{\Q(\lambda_7)}&3
    \\\hline(3, 3, 9) & 19 & 19 & 1 & 1 & \href{https://www.lmfdb.org/NumberField/3.3.81.1}{\Q(\lambda_9)}&3
    \\\hline(3, 3, 15) & 2 & 16 & 1 & 1 & \href{https://www.lmfdb.org/NumberField/4.4.1125.1}{\Q(\lambda_{15})}&1
    \\\hline(3,4,4) & 7 & 7 & 1 & 1 & \href{https://www.lmfdb.org/NumberField/1.1.1.1}{\Q} &1
    \\\hline(3,4,4) & 17 & 17 & 1 & 1 & \href{https://www.lmfdb.org/NumberField/1.1.1.1}{\Q} &1
    \\\hline(3, 4, 5) & 3 & 9 & 1 & 2 & \href{https://www.lmfdb.org/NumberField/4.4.1600.1}{\Q(\sqrt{5},\sqrt{8})}&2
    \\\hline(3, 4, 6) & 5 & 5 & -1 & 1 & \href{https://www.lmfdb.org/NumberField/2.2.24.1}{\Q(\sqrt{24})}&2
    \\\hline(3, 4, 7) & 7 & 7 & 1 & 2 & \href{https://www.lmfdb.org/NumberField/6.6.1229312.1}{\Q(\sqrt{2},\lambda_7)}&2
    \\\hline(3,4,12) & 13 & 13 & -1 & 1 & \href{https://www.lmfdb.org/NumberField/2.2.11.1}{\Q(\sqrt{3})} &2
    \\\hline(3, 5, 5) & 2 & 4 & 1 & 1 & \href{https://www.lmfdb.org/NumberField/2.2.5.1}{\Q(\sqrt{5})}&1
    \\\hline(3,5,5) & 5 & 5 & 1 & 1 & \href{https://www.lmfdb.org/NumberField/2.2.5.1}{\Q(\sqrt{5})}&1 
    \\\hline(3,5,5) & 11 & 11 & 1 & 1 & \href{https://www.lmfdb.org/NumberField/2.2.5.1}{\Q(\sqrt{5})} &2
    \\\hline(3,6,6) & 13 & 13 & 1 & 1 & \href{https://www.lmfdb.org/NumberField/1.1.1.1}{\Q} &1
    \\\hline(3, 6, 8) & 7 & 7 & -1 & 3 & \href{https://www.lmfdb.org/NumberField/4.4.1®8432.1}{4.4.18432.1}&4
    \\\hline(3, 7, 7) & 7 & 7 & 1 & 2 \,(1)& \href{https://www.lmfdb.org/NumberField/3.3.49.1}{\Q(\lambda_7)}&1
    \\\hline(3, 7, 7) & 2 & 8 & 1 & 2\,(1) & \href{https://www.lmfdb.org/NumberField/3.3.49.1}{\Q(\lambda_7)}&1
    \\\hline(4, 4, 4) & 17 & 17 & 1 & 1 & \href{https://www.lmfdb.org/NumberField/2.2.8.1}{\Q(\sqrt{8})}&2
    \\\hline(4, 4, 5) & 3 & 9 & 1 & 1 & \href{https://www.lmfdb.org/NumberField/2.2.5.1}{\Q(\sqrt{5})}&1
    \\\hline(4, 4, 6) & 13 & 13 & -1 & 1 & \href{https://www.lmfdb.org/NumberField/2.2.12.1}{\Q(\sqrt{12})}&2
    \\\hline(4, 5, 6) & 5 & 5 & -1 & 2 & \href{https://www.lmfdb.org/NumberField/4.4.14400.1}{\Q(\sqrt{5},\sqrt{24})}&2
    \\\hline(4, 6, 6) & 7 & 7 & -1 & 1 & \href{https://www.lmfdb.org/NumberField/2.2.8.1}{\Q(\sqrt{8})}&2
    \\\hline(4,8,8) & 3 & 9 & -1 & 1 & \href{https://www.lmfdb.org/NumberField/2.2.8.1}{\Q(\sqrt{2})} &1
    \\\hline(5,5,5) & 11 & 11 & 1 & 1 & \href{https://www.lmfdb.org/NumberField/2.2.5.1}{\Q(\sqrt{5})} &2
    \\\hline(7, 7, 7) & 2 & 8 & 1 & 1 & \href{https://www.lmfdb.org/NumberField/3.3.49.1}{\Q(\lambda_7)}&1
    \end{array}
    \egroup
\end{equation*}
\end{footnotesize}

\vfill\newpage

\end{document}